\documentclass[12pt]{amsart}
\usepackage{amssymb}
\usepackage[all]{xy}

\textwidth=160mm
\textheight=200mm
\topmargin=10mm
\hoffset=-20mm

\newtheorem{theorem}{Theorem}[section]
\newtheorem{definition}[theorem]{Definition}
\newtheorem{proposition}[theorem]{Proposition}

\begin{document}

\title[Unitary easy quantum groups]{Unitary easy quantum groups: geometric aspects}

\author{Teodor Banica}
\address{T.B.: Department of Mathematics, University of Cergy-Pontoise, F-95000 Cergy-Pontoise, France. {\tt teo.banica@gmail.com}}

\subjclass[2010]{46L65 (46L87)}
\keywords{Quantum group, Noncommutative sphere}

\begin{abstract}
We discuss the classification problem for the unitary easy quantum groups, under strong axioms, of noncommutative geometric nature. Our main results concern the intermediate easy quantum groups $O_N\subset G\subset U_N^+$. To any such quantum group we associate its Schur-Weyl twist $\bar{G}$, two noncommutative spheres $S,\bar{S}$, a noncommutative torus $T$, and a quantum reflection group $K$. Studying $(S,\bar{S},T,K,G,\bar{G})$ leads then to some natural axioms, which can be used in order to investigate $G$ itself. We prove that the main examples are covered by our formalism, and we conjecture that in what concerns the case $U_N\subset G\subset U_N^+$, our axioms should restrict the list of known examples.
\end{abstract}

\maketitle

\section*{Introduction}

It is well-known that any compact Lie group appears as a closed subgroup of a unitary group, $G\subset U_N$. As explained by Wang in \cite{wa1}, the unitary group $U_N$ has a free version $U_N^+$, and studying the closed quantum subgroups $G\subset U_N^+$ is a problem of general interest. Such subgroups include the usual compact Lie groups $G\subset U_N$, their $q=-1$ twists, and the duals $G=\widehat{\Gamma}$ of the finitely generated discrete groups $\Gamma=<g_1,\ldots,g_N>$.

The closed subgroups $G\subset U_N^+$ do not have an analogue of a Lie algebra, or much known differential geometric structure. As explained by Woronowicz in \cite{wo1}, \cite{wo2}, this is not an issue. The representation theory of the subgroups $G\subset U_N^+$ can be succesfully developed, with an analogue of the Peter-Weyl theory, of Tannakian duality, and of the Weingarten integration formula. Thus, at least in what concerns certain algebraic geometric and probabilistic aspects, the theory of the subgroups $G\subset U_N^+$ is potentially as powerful as that of the usual subgroups $G\subset U_N$, and just waits to be developed, and applied.

\bigskip

A closed subgroup $G\subset U_N^+$ is called ``easy'' when its Tannakian dual comes in the simplest possible way: from set-theoretic partitions. To be more precise, given a partition $\pi\in P(k,l)$ between an upper row of $k$ points, and a lower row of $l$ points, we can associate to it the following operator, between tensor powers of $\mathbb C^N$:
$$T_\pi(e_{i_1}\otimes\ldots\otimes e_{i_k})=\sum_{j_1\ldots j_l}\delta_\pi\begin{pmatrix}i_1&\ldots&i_k\\ j_1&\ldots&j_l\end{pmatrix}e_{j_1}\otimes\ldots\otimes e_{j_l}$$

Here $\delta_\pi\in\{0,1\}$ is a generalized Kronecker symbol, whose value depends on whether the indices fit or not. Now with this notion in hand, a quantum group $G\subset U_N^+$ is called easy when the following formula holds, for certain subsets $D(k,l)\subset P(k,l)$:
$$Hom(u^{\otimes k},u^{\otimes l})=span\left(T_\pi\Big|\pi\in D(k,l)\right)$$

As a basic example, $O_N$ is easy, due to an old result of Brauer \cite{bra}, with $D=P_2$ being the set of pairings. Easy as well is $S_N$, with $D=P$ being the set of all partitions. When allowing $k,l$ to be colored integers, this formalism covers $U_N$ too, with $D=\mathcal P_2$ being the set of ``matching'' pairings. There are several other examples of type $G\subset U_N$, and all these examples can be ``liberated'' into quantum groups $G^+\subset U_N^+$, by using a standard idea from free probability \cite{bpa}, \cite{vdn}, namely that of removing the crossings from $D$. 

\bigskip

The easy quantum groups, being the ``simplest'' from a Tannakian point of view, are at the center of the compact quantum group theory. Understanding their structure, and finding classification results for them, are fundamental questions.

The easy subgroups $G\subset O_N^+$ were classified by Raum and Weber in \cite{rwe}. In the general case, the situation is considerably more complicated. Of particular interest here is the classification of the intermediate easy quantum groups $U_N\subset U_N^\times\subset U_N^+$.

One idea in dealing with this latter question comes from noncommutative geometry. Roughly speaking, the whole world we are living in, and its geometry, comes from $U_N$. So, in order to understand the partial liberations $U_N^\times$, we should try to ``create'' the associated partial liberated world and geometry. Indeed, in this process we will certainly meet some obstructions, and so we will end up with a finer understanding of $U_N^\times$.

\bigskip

In practice now, let us first think at $U_N$, as an object on its own. The first thing that $U_N$ ``sees'' is the complex sphere $S^{N-1}_\mathbb C$. Indeed, this sphere appears by rotating the point $(1,0,\ldots,0)$ by the matrices $U\in U_N$. Conversely, $S^{N-1}_\mathbb C$ can see $U_N$ too, because this is its isometry group. Thus, we have a correspondence $U_N\leftrightarrow S^{N-1}_\mathbb C$.

This correspondence is already quite interesting for us, but let us further complicate things, by putting into the picture two more basic objects, namely the standard torus $\mathbb T^N$, and its isometry group $K_N=\mathbb T\wr S_N$. It is not difficult to see that we have a full set of $3\times4=12$ correspondences between our 4 objects, so our diagram becomes:
$$\xymatrix@R=40pt@C=40pt{
K_N\ar[r]\ar[d]\ar[dr]&U_N\ar[l]\ar[d]\ar[dl]\\
\mathbb T^N\ar[u]\ar[ur]\ar[r]&S^{N-1}_\mathbb C\ar[l]\ar[ul]\ar[u]
}$$

Now let us further complicate things by twisting at $q=-1$. Since the torus $\mathbb T^N=\widehat{\mathbb Z^N}$ is ultimately a ``discrete'' object, and so is its isometry group $K_N$, these objects are not twistable. Thus, what we have to do is to simply add the twists $\bar{U}_N,\bar{S}^{N-1}_\mathbb C$ of $U_N,S^{N-1}_\mathbb C$. There are now plenty of correspondences between our 6 objects, and by deleting some of them, in order to keep things simple, our diagram becomes as follows:
$$\xymatrix@R=40pt@C=40pt{
\bar{U}_N\ar[r]\ar[d]&K_N\ar[r]\ar[d]\ar[l]&U_N\ar[l]\ar[d]\\
\bar{S}^{N-1}_\mathbb C\ar[u]\ar[r]&\mathbb T^N\ar[u]\ar[r]\ar[l]&S^{N-1}_\mathbb C\ar[l]\ar[u]
}$$

Summarizing, starting with $U_N$, and allowing some basic geometric operations, we ended up with $6$ objects, having correspondences between them. We can stop here, and call the resulting structure, objects + correspondences, ``basic geometry''.

\bigskip

With these preliminaries made, we can go back now to the classification problem for the easy quantum groups. The strategy would be as follows:
\begin{enumerate}
\item Find the intermediate easy quantum groups $U_N\subset U_N^\times\subset U_N^+$ which allow the construction of ``basic geometries'', in the above sense.

\item Intersect these quantum groups $U_N^\times$ with the known examples of free qauntum groups $G_N^+$, as to obtain easy quantum groups $G_N^\times=G_N^+\cap U_N^\times$.
\end{enumerate}

Of course, there is a priori no reason for obtaining all the easy quantum groups $G\subset U_N^+$ in this way. What we should expect from such a program would be rather the axiomatization and classification of a ``core class'' of easy quantum groups $G\subset U_N^+$, which can be extended afterwards, to the general easy setting, and beyond.

\bigskip

In practice now, these ideas have been around for about 5 years, but the progress has been quite slow. Our aim here is comment on the status of this program, notably by continuing our recent work with Bichon \cite{bb1}, \cite{bb2}. We will review the definition of the easy ``noncommutative geometries'' given there, using only the unitary group, the torus and the sphere, by taking into consideration the twisting operation as well.

\bigskip

In order to explain our results, let us first remark that the ``basic geometry'' diagram for $U_N$ has a well-known and important real counterpart, as follows:
$$\xymatrix@R=40pt@C=40pt{
\bar{O}_N\ar[r]\ar[d]&H_N\ar[r]\ar[d]\ar[l]&O_N\ar[l]\ar[d]\\
\bar{S}^{N-1}_\mathbb R\ar[u]\ar[r]&\mathbb Z_2^N\ar[u]\ar[r]\ar[l]&S^{N-1}_\mathbb R\ar[l]\ar[u]
}$$

We can cover this diagram as well, by extending our formalism, and looking at the intermediate easy quantum groups $O_N\subset G\subset U_N^+$. Having such an extension is something quite interesting and useful, because, unlike in the classical case, in the general noncommutative setting the distinction between $\mathbb R,\mathbb C$ becomes ``blurred''. As an example here, the projective versions of $O_N^+,U_N^+$ are known to coincide.

\bigskip

After developing some general theory, we will conclude that we have a number of noncommutative geometries in our sense, having unitary groups as follows:
$$\xymatrix@R=15mm@C=15mm{
U_N\ar[r]&U_N^*\ar[r]&U_N^+\\
\mathbb TO_N\ar[r]\ar[u]&\mathbb TO_N^*\ar[r]\ar[u]&\mathbb TO_N^+\ar[u]\\
O_N\ar[r]\ar[u]&O_N^*\ar[r]\ar[u]&O_N^+\ar[u]}$$

The classification problem for the intermediate liberations $U_N\subset U_N^\times\subset U_N^+$ remains open. There are several natural candidates here, constructed in \cite{bb1}, \cite{bb2}, \cite{bdd}, and we conjecture that our axioms should substantially restrict the list of examples. Yet another approach could come from a more systematic study of the half-liberations of $U_N$, from a physical point of view, in relation with \cite{bdd}, \cite{bd+}, and with the work in \cite{cco}, \cite{con}.

\bigskip

The paper is organized as follows: in 1-2 we review the previous axiomatization work in \cite{bb2}, by taking into account the quantum reflections, in 3-6 we restrict the attention to the easy case, and we develop some general theory here, and in 7-8 we discuss the twisting operation, we present the new axioms and theory, and we discuss open questions.

\bigskip

\noindent {\bf Acknowledgements.} I would like to thank John, Fred, Poufinette and Ursula, for advice and support, during the preparation of the present paper.

\section{Basic axioms}

We review here the material from our recent paper with Bichon \cite{bb2}. The idea there was to axiomatize the triples $(S,T,G)$ which satisfy what we can expect from a noncommutative sphere, a noncommutative torus, and a unitary quantum group. We will refine this formalism, by adding into the picture a quantum reflection group $K$ as well. In other words, we are looking for suitable axioms for quadruplets $(S,T,K,G)$.

In order to talk about noncommutative spheres, we must first introduce the ``biggest'' such sphere, which will contain all the other ones, that we will construct later on. The correct definition for this sphere, from \cite{ba1}, is as follows:

\begin{definition}
The free complex sphere $S^{N-1}_{\mathbb C,+}$ is the compact noncommutative space appearing as abstract spectrum of the following algebra: 
$$C(S^{N-1}_{\mathbb C,+})=C^*\left((x_i)_{i=1,\ldots,N}\Big|\sum_ix_ix_i^*=\sum_ix_i^*x_i=1\right)$$
The closed subspace $S^{N-1}_{\mathbb R,+}\subset S^{N-1}_{\mathbb C,+}$ defined by assuming $x_i=x_i^*$ is called free real sphere.
\end{definition}

Here we use of course some basic knowledge of the $C^*$-algebra theory, and of the Gelfand duality theorem in particular. For some preliminaries here, we refer to \cite{ba1}, \cite{ba2}.

As a first observation, we have inclusions $S^{N-1}_\mathbb C\subset S^{N-1}_{\mathbb C,+}$ and $S^{N-1}_\mathbb R\subset S^{N-1}_{\mathbb R,+}$. It is elementary to check that these inclusions are not isomorphisms. See \cite{ba1}.

Regarding the tori, no special discussion is needed here, because these tori will appear by definition as abstract duals $T=\widehat{\Gamma}$ of certain discrete groups $\Gamma$.

Regarding now the quantum groups, we use here Woronowicz's formalism in \cite{wo1}, \cite{wo2}, under the extra assumption $S^2=id$. We will be particularly interested in Wang's quantum group $U_N^+$, and its orthogonal version $O_N^+$, constructed as follows:

\begin{definition}
The free unitary group $U_N^+$ is the compact quantum group appearing as abstract spectrum of the following algebra, 
$$C(U_N^+)=C^*\left((u_{ij})_{i,j=1,\ldots,N}\Big|u^*=u^{-1},u^t=\bar{u}^{-1}\right)$$
with comultiplication, counit and antipode maps given by:
$$\Delta(u_{ij})=\sum_ku_{ik}\otimes u_{kj}\quad,\quad\varepsilon(u_{ij})=\delta_{ij}\quad,\quad S(u_{ij})=u_{ji}^*$$
The quantum subgroup $O_N^+\subset U_N^+$ defined via $u_{ij}=u_{ij}^*$ is called free orthogonal group.
\end{definition}

Here the fact that the above morphisms $\Delta,\varepsilon,S$  are indeed well-defined follows from the universality property of $C(U_N^+)$, and in view of \cite{wo1}, \cite{wo2}, this allows us to call $U_N^+$ a compact quantum group. A similar discussion applies to $O_N^+$. See \cite{wa1}.

Finally, we will need the free analogues of the hyperoctahedral group $H_N=\mathbb Z_2\wr S_N$, and of its complex version $K_N=\mathbb T\wr S_N$. We have the following definition:

\begin{definition}
The quantum reflection group $K_N^+$ appears as a closed subgroup of $U_N^+$, via the following construction at the algebra level:
$$C(K_N^+)=C(U_N^+)\Big/\Big<u_{ij}u_{ij}^*=u_{ij}^*u_{ij}={\rm magic}\Big>$$ 
The free hyperoctahedral group $H_N^+\subset K_N ^+$ appears by imposing the conditions $u_{ij}=u_{ij}^*$.
\end{definition}

Here the magic condition, due to Wang \cite{wa2}, states that the elements $p_{ij}=u_{ij}u_{ij}^*$ must be projections, summing up to 1 on each row and each column of $p=(p_{ij})$. By using the general theory from \cite{bic}, \cite{wa2} one can deduce that we have free wreath product decompositions of type  $H_N^+=\mathbb Z_2\wr_*S_N^+$ and $K_N^+=\mathbb T\wr_*S_N^+$. See \cite{bbc}, \cite{bve}.

Following now \cite{bb2}, we have the following key notions:

\begin{definition}
Consider a subspace $S\subset S^{N-1}_{\mathbb C,+}$, and a subgroup $G\subset U_N^+$.
\begin{enumerate}
\item The standard torus of $S$ is the subspace $T\subset S$ obtained by setting, at the algebra level, $C(T)=C(S)/<x_ix_i^*=x_i^*x_i=\frac{1}{N}>$.

\item The diagonal torus of $G$ is the subspace $T\subset G$ obtained by setting, at the algebra level, $C(T)=C(G)/<u_{ij}=0|\forall i\neq j>$.
\end{enumerate}
\end{definition}

This definition requires, as usual, some basic $C^*$-algebra knowledge.

Regarding the first construction, observe that the rescaled generators $u_i=\sqrt{N}x_i$ are subject to the relations $u_i^*=u_i^{-1}$, which produce the free group algebra $C^*(F_N)$. Thus, we obtain here a closed subpace of the dual of the free group, $T\subset\widehat{F_N}$.

Regarding now the second construction, observe that the algebra $C(T)$ is generated by the variables $g_i=u_{ii}$, which are group-like. Thus, we obtain here a group dual $T=\widehat{\Gamma}$, where $\Gamma=<g_1,\ldots,g_N>$ is the discrete group generated by these variables.

We refer to \cite{bb2} for full details regarding this material. At the level of basic examples now, the situation is quite interesting, because we have:

\begin{proposition}
The standard and diagonal tori associated to the basic noncommutative spheres and quantum groups are as follows:
\begin{enumerate}
\item $S=S^{N-1}_\mathbb C$, $K=K_N$, $G=U_N$ all produce the standard torus $T=\mathbb T^N$. 

\item $S=S^{N-1}_\mathbb R$, $K=H_N$, $G=O_N$ all produce the standard cube $T=\mathbb Z_2^N$. 

\item $S=S^{N-1}_{\mathbb C,+}$, $K=K_N^+$, $G=U_N^+$ all produce the group dual $T=\widehat{F_N}$. 

\item $S=S^{N-1}_{\mathbb R,+}$, $K=H_N^+$, $G=O_N^+$ all produce the group dual $T=\widehat{\mathbb Z_2^{*N}}$. 
\end{enumerate}
\end{proposition}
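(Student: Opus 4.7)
The plan is to observe that both the standard-torus construction and the diagonal-torus construction produce, in each case, a $C^*$-algebra generated by $N$ unitaries $g_1,\dots,g_N$ subject to a small list of relations, and then to identify these as the group $C^*$-algebras of the four discrete groups $\mathbb Z^N,\mathbb Z_2^N,F_N,\mathbb Z_2^{*N}$. The torus is then $\widehat{\Gamma}$ by Pontryagin/Gelfand duality at the level of group algebras.

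For the standard torus, I would start by noting, as the excerpt already does, that the rescaling $u_i=\sqrt{N}x_i$ turns the relations $x_ix_i^*=x_i^*x_i=1/N$ into $u_iu_i^*=u_i^*u_i=1$. So in every one of the four sphere cases the quotient $C(T)$ is generated by unitaries $u_1,\dots,u_N$; the only question is which further relations survive. In the classical cases $S^{N-1}_{\mathbb C},S^{N-1}_{\mathbb R}$ the ambient algebra is commutative, so the $u_i$ commute, and in the real case the extra relation $x_i=x_i^*$ yields $u_i=u_i^*$, i.e.\ $u_i^2=1$; this gives $T=\widehat{\mathbb Z^N}=\mathbb T^N$ and $T=\widehat{\mathbb Z_2^N}=\mathbb Z_2^N$ respectively. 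In the free cases no commutation relation survives, and we obtain $T=\widehat{F_N}$ and $T=\widehat{\mathbb Z_2^{*N}}$.

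For the diagonal torus, the analogous observation is that if $u=(u_{ij})$ is a biunitary matrix and we impose $u_{ij}=0$ for $i\neq j$, then both $uu^*=1$ and $u^*u=1$ collapse to $u_{ii}u_{ii}^*=u_{ii}^*u_{ii}=1$, so the $g_i=u_{ii}$ are again unitaries. For $U_N,O_N$ commutativity is inherited, and for $O_N$ self-adjointness gives $g_i^2=1$, yielding $\mathbb T^N$ and $\mathbb Z_2^N$; for $U_N^+,O_N^+$ nothing else survives, yielding $\widehat{F_N}$ and $\widehat{\mathbb Z_2^{*N}}$. The only mildly nontrivial cases are the quantum reflection groups: here one has to check that Wang's magic condition on $p_{ij}=u_{ij}u_{ij}^*$, after setting the off-diagonal $u_{ij}$ to zero, reduces exactly to $p_{ii}=1$, i.e.\ $g_i$ unitary, with no additional constraint. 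Since $\sum_j p_{ij}=1$ on each row and the off-diagonal $p_{ij}$ vanish, this is automatic, and for $H_N^+\subset K_N^+$ the relation $u_{ij}=u_{ij}^*$ gives $g_i^2=1$ as in the orthogonal case; for the classical $K_N,H_N$ commutativity is inherited from the ambient commutative algebra.

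The main (and really the only) obstacle is confirming that in the free cases no hidden relation survives, i.e.\ that the surjection $C^*(\Gamma)\to C(T)$ coming from the universal property of the group $C^*$-algebra is actually an isomorphism. For this I would exhibit an inverse: using the universal property of $C(U_N^+)$ (resp.\ $C(K_N^+)$, $C(S^{N-1}_{\mathbb C,+})$, etc.), define a $*$-homomorphism from the ambient algebra to $C^*(\Gamma)$ by sending $u_{ii}\mapsto g_i$ and $u_{ij}\mapsto 0$ for $i\neq j$ (respectively $x_i\mapsto g_i/\sqrt N$), check that the defining relations are satisfied, and observe that this factors through the torus quotient and inverts the canonical map. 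This is the standard trick used throughout \cite{bb2}, and it takes care of all four ``free'' identifications uniformly.
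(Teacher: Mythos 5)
Your argument is correct and is essentially the detailed version of what the paper merely cites: the paper's proof says the statement ``follows from the definition of the various objects involved'' and refers to \cite{bb2}, and your case-by-case verification (rescale to unitaries, track which relations survive in each quotient, and in the universal cases exhibit the inverse $u_{ij}\mapsto\delta_{ij}g_i$, resp.\ $x_i\mapsto g_i/\sqrt N$, to rule out hidden relations) is precisely the standard way to carry this out.
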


\begin{proof}
Here the assertions (1,2) are well-known, and their free analogues (3,4) are well-known too, and follow from the definition of the various objects involved. See \cite{bb2}.
\end{proof}

Summarizing, we have good correspondences $S,K,G\to T$. In order to finish the axiomatization work, the extra piece of theory that we will need is as follows: 

\begin{definition}
Consider an algebraic submanifold $X\subset S^{N-1}_{\mathbb C,+}$, i.e. a closed subset defined via algebraic relations, and a closed quantum subgroup $G\subset U_N^+$.
\begin{enumerate}
\item We say that we have an affine action $G\curvearrowright X$ when the formula $\Phi(x_i)=\sum_ju_{ij}\otimes x_j$ defines a morphism of algebras $\Phi:C(X)\to C(G)\otimes C(X)$.

\item The biggest quantum subgroup $G\subset U_N^+$ acting affinely on $X$ is denoted $G^+(X)$, and is called affine quantum isometry group of $X$.
\end{enumerate}
\end{definition}

Observe that the morphism in (1) is automatically coassociative, $(\Phi\otimes id)\Phi=(id\otimes\Delta)\Phi$, and counital as well, $(id\otimes\varepsilon)\Phi=id$. When $X,G$ are both classical such a morphism must appear by transposition from a usual affine group action $G\times X\to X$.

Regarding now (2), it is routine to check that such a biggest quantum group exists indeed, simply by dividing $C(U_N^+)$ by the appropriate relations. See \cite{ba3}, \cite{gos}.

Following \cite{bb2}, we can now formulate our main definition, as follows:

\begin{definition}
A noncommutative geometry $(S,T,K,G)$ consists of
\begin{enumerate}
\item an intermediate algebraic manifold $S^{N-1}_\mathbb R\subset S\subset S^{N-1}_{\mathbb C,+}$, called sphere,

\item an intermediate compact space $\widehat{\mathbb Z_2^N}\subset T\subset\widehat{F_N}$, called torus,

\item and intermediate quantum groups $H_N\subset K\subset K_N^+$ and $O_N\subset G\subset U_N^+$,
\end{enumerate}
such that the following conditions are satisfied,
\begin{enumerate}
\item $K=G\cap K_N^+$, $G=<K,O_N>$,

\item $T$ is the standard torus of $S$, and the diagonal torus of $K,G$,

\item $G=G^+(S)$, $K=G\cap G^+(T)$,
\end{enumerate}
where we agree to identify the full and reduced quantum group algebras.
\end{definition}

Here we use the standard operations $(G,H)\to G\cap H$ and $(G,H)\to<G,H>$ for the subgroups of $U_N^+$, with the free/reduced convention made at the end. To be more precise, assuming $C(G)=C(U_N^+)/I$ and $C(H)=C(U_N^+)/J$, we set $C(G\cap H)=C(U_N^+)/<I,J>$. Also, assuming that $G,H$ come from Tannakian categories $C,D$, we let $<G,H>$ be the quantum group associated to the Tannakian category $C\cap D$. See \cite{ntu}.

At the level of the main examples, we have the following result:

\begin{theorem}
We have $4$ basic noncommutative geometries, as follows,
$$\xymatrix@R=30pt@C=30pt{
S^{N-1}_\mathbb C\ar[r]&S^{N-1}_{\mathbb C,+}\\
S^{N-1}_\mathbb R\ar[u]\ar[r]&S^{N-1}_{\mathbb R,+}\ar[u]
}\quad\xymatrix@R=20pt@C=10pt{\\ :}\quad
\xymatrix@R=30pt@C=30pt{
\mathbb T^N\ar[r]&\widehat{F_N}\\
\mathbb Z_2^N\ar[u]\ar[r]&\widehat{\mathbb Z_2^{*N}}\ar[u]
}\quad\xymatrix@R=20pt@C=10pt{\\ :}\quad
\xymatrix@R=30pt@C=30pt{
K_N\ar[r]&K_N^+\\
H_N\ar[u]\ar[r]&H_N^+\ar[u]
}\quad\xymatrix@R=20pt@C=10pt{\\ :}\quad
\xymatrix@R=30pt@C=30pt{
U_N\ar[r]&U_N^+\\
O_N\ar[u]\ar[r]&O_N^+\ar[u]
}$$
with the spheres $S$ corresponding to the tori $T$, and to the quantum groups $K,G$.
\end{theorem}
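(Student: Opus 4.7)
The plan is to verify, one axiom at a time, that each of the four candidate tuples $(S,T,K,G)$ listed in the theorem satisfies the three conditions of Definition 1.8. The intermediate containment requirements on $S,T,K,G$ are immediate from the definitions of the objects in each row. The second axiom, namely that $T$ is simultaneously the standard torus of $S$ and the diagonal torus of $K$ and $G$, has essentially already been established: Proposition 1.6 asserts precisely that the four tori $\mathbb T^N, \mathbb Z_2^N, \widehat{F_N}, \widehat{\mathbb Z_2^{*N}}$ arise from the corresponding triples $(S,K,G)$ in each of the four cases.

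For the intersection axiom $K = G \cap K_N^+$, I would proceed case by case. Classically, requiring a matrix $u \in U_N$ to satisfy the magic condition forces each entry to have modulus $0$ or $1$ with exactly one nonzero entry per row and column, cutting $U_N$ down to $K_N$; intersecting further with self-adjointness yields $H_N$. In the free cases, the relations defining $K_N^+$ (resp.\ $H_N^+$) inside $U_N^+$ (resp.\ $O_N^+$) are built directly into the quantum subgroup inclusion, so the identities $U_N^+ \cap K_N^+ = K_N^+$ and $O_N^+ \cap K_N^+ = H_N^+$ are tautological. The reverse generation identities $G = \langle K, O_N\rangle$ are then best verified at the Tannakian level, by checking that the categories of partitions associated to $K$ and to $O_N$ intersect to give exactly the category associated to $G$.

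For the third axiom, the equality $G = G^+(S)$ is, in each of the four cases, a known affine quantum isometry group computation: classically it reduces to the fact that the isometry group of the round real or complex sphere is $O_N$ or $U_N$, and in the free cases it is the result of \cite{ba1}, identifying $O_N^+, U_N^+$ as the affine quantum isometry groups of $S^{N-1}_{\mathbb R,+}, S^{N-1}_{\mathbb C,+}$. The remaining equality $K = G \cap G^+(T)$ then reduces to computing $G^+(T)$ for each of the four tori and intersecting with the ambient quantum group; here one uses that, at the Tannakian level, $G^+(T)$ is cut out by the ``diagonal'' relations imposed by $T$, which combine with the defining relations of $G$ to leave precisely the quantum reflection group $K$.

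The main obstacle in this program is the pair of affine isometry computations $G = G^+(S)$ and $K = G \cap G^+(T)$, which constitute the genuinely geometric content of the axioms. The sphere side is delivered in the free case by \cite{ba1}, while the torus side requires checking that no intermediate quantum group strictly between $K$ and $G \cap G^+(T)$ can intrude, a verification that is clean here but is precisely the type of obstruction that the more general theory in Sections 3--8 is designed to confront. Throughout, the free/reduced convention at the end of Definition 1.8 must be invoked, so that the various universal $C^*$-algebras involved are compared on equal footing.
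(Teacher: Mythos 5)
Your proposal follows the same overall architecture as the paper's proof, and most of it is correct, but there is a genuine gap in the treatment of the axiom $K = G \cap G^+(T)$ in the two classical cases. You write that ``$G^+(T)$ is cut out by the `diagonal' relations imposed by $T$, which combine with the defining relations of $G$ to leave precisely the quantum reflection group $K$.'' This glosses over the one nontrivial computation in the whole verification. The quantum isometry group $G^+(\mathbb T^N)$ is \emph{not} $K_N$ and is not obtained by any naive diagonal cut-down: it is the $q=-1$ twist $\bar{U}_N$, and likewise $G^+(\mathbb Z_2^N)=\bar{O}_N$, which is the content of the duality principle in \cite{ba3}. These twisted groups are strictly larger than the reflection groups $K_N, H_N$, and they are \emph{incomparable} with $U_N, O_N$. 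The reason the axiom holds is the further nontrivial intersection computation $U_N\cap\bar{U}_N = K_N$ and $O_N\cap\bar{O}_N = H_N$, established in \cite{ba2}. Without naming the twists and these intersection identities, the argument for the classical columns of the diagram has a hole; the free cases, where $G^+(\widehat{F_N})=K_N^+$ and $G^+(\widehat{\mathbb Z_2^{*N}})=H_N^+$ and the intersections are trivial, are the only ones where your description is accurate.

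A second, more minor remark: for $G=G^+(S)$ you write that classically it ``reduces to the fact that the isometry group of the round sphere is $O_N$ or $U_N$.'' In the quantum setting this is not automatic: one must show that no genuinely quantum group strictly between $O_N$ (resp.\ $U_N$) and $U_N^+$ can act affinely on the \emph{classical} sphere, which is a quantum rigidity statement proved in \cite{bgo}, \cite{bhg}, not a direct consequence of the classical Lie-theoretic fact. Your proposal is correct in citing \cite{ba1} for the free sphere computations.
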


\begin{proof}
This is well-known, with the various axioms corresponding to a number of standard results from the quantum group literature. The idea is as follows:

(1) The axiom $K=G\cap K_N^+$ holds indeed, by definition of the various quantum groups $K$ involved. As for the axiom $G=<K,O_N>$, whose verification involves categories and Tannakian theory, here the results are known from \cite{bbc}, \cite{bve}. 

(2) We know indeed from Proposition 1.5 above that $T$ is the standard torus of $S$, as well as the diagonal torus of $K,G$, in all the 4 cases under investigation.

(3) The axiom $G=G^+(S)$ holds indeed, and for details here, we refer to \cite{ba1}, \cite{bgo}, \cite{bhg}. Regarding now the axiom $K=G\cap G^+(T)$, the verifications here are well-known as well, but use a number of more advanced ingredients. First of all, the quantum isometry groups $G^+(T)$ of the tori were computed in \cite{ba3}, and they are as follows:
$$\xymatrix@R=30pt@C=30pt{
\mathbb T^N\ar[r]&\widehat{F_N}\\
\mathbb Z_2^N\ar[u]\ar[r]&\widehat{\mathbb Z_2^{*N}}\ar[u]
}\qquad\xymatrix@R=20pt@C=10pt{\\ \ar@{~}[r]&}\qquad
\xymatrix@R=30pt@C=30pt{
\bar{U}_N\ar[r]&K_N^+\\
\bar{O}_N\ar[u]\ar[r]&H_N^+\ar[u]
}$$

Here $\bar{O}_N,\bar{U}_N$ are the $q=-1$ twists of the groups $O_N,U_N$, constructed in \cite{ba1}, \cite{bbc}. Now by intersecting with corresponding quantum groups $G$, we obtain, as desired:
$$\xymatrix@R=30pt@C=30pt{
U_N\cap\bar{U}_N\ar[r]&U_N^+\cap K_N^+\\
O_N\cap\bar{O}_N\ar[u]\ar[r]&O_N^+\cap H_N^+\ar[u]
}\qquad\xymatrix@R=20pt@C=10pt{\\ =}\qquad
\xymatrix@R=30pt@C=30pt{
K_N\ar[r]&K_N^+\\
H_N\ar[u]\ar[r]&H_N^+\ar[u]
}$$

Indeed, the results on the left follow from the computations in \cite{ba2}, and the results on the right are trivial, by definition of the quantum groups $K$ involved. 
\end{proof}

\section{Diagrams, easiness}

In the reminder of this paper we basically restrict the attention to the easy case, with the aim of refining our noncommutative geometry axioms, in this case.

Let us begin with some standard quantum group definitions:

\begin{definition}
We call ``colored integer'' a sequence of type $k=\bullet\circ\circ\bullet\circ\ldots\,$ The length of such an integer is the total number of $\circ$ and $\bullet$ symbols, denoted $|k|\in\mathbb N$. Also:
\begin{enumerate}
\item Given a quantum group corepresentation $u$, its tensor powers $u^{\otimes k}$, with $k$ being colored integers, are defined by $u^\circ=u,u^\bullet=\bar{u}$ and multiplicativity. 

\item The Tannakian category associated to $G\subset U_N^+$ is the collection of vector spaces $\mathcal C_G(k,l)=Hom(u^{\otimes k},u^{\otimes l})$, with $k,l$ ranging over the colored integers.
\end{enumerate}
\end{definition}

The terminology here comes from the fact that $\mathcal C_G$ is stable under taking tensor products $\otimes$, compositions $\circ$, and adjoints $*$. It is known from standard Peter-Weyl theory that any irreducible corepresentation of $G$ appears as a subcorepresentation of some $u^{\otimes k}$, with $k$ being a colored integer, and further building on this fact leads to the Tannakian duality for compact quantum groups, which states that $G$ can be fully reconstructed from $\mathcal C_G$. We refer to \cite{wo2} for the original result, and to \cite{mal} for a simplified presentation of it. 

Generally speaking, a closed subgroup $G\subset U_N^+$ is called ``easy'' when $\mathcal C_G$ appears in the simplest possible way, as the span of certain linear maps associated to the set-theoretic partitions. In order for this to hold, we must have in fact $S_N\subset G\subset U_N^+$.

In what follows we are interested only in the notion of easiness for the quantum groups $H_N\subset G\subset U_N^+$, and the combinatorial objects that we will need are as follows:

\begin{definition}
We let $P_{even}(k,l)$ be the set of partitions between an upper row of $|k|$ points and a lower row of $|l|$ points, colored by the $\circ,\bullet$ symbols of $k,l$, having the property that each block contains an even number of elements. Also, we define:
\begin{enumerate}
\item $\mathcal P_{even}(k,l)\subset P_{even}(k,l)$: the partitions which are ``matching'', in the sense that when rotating, each block contains the same number of $\circ$ and $\bullet$ symbols.

\item $\mathcal{NC}_{even}(k,l)\subset NC_{even}(k,l)\subset P_{even}(k,l)$: the set of noncrossing matching partitions, respectively the set of all noncrossing partitions.

\item $\mathcal{NC}_2(k,l)\subset NC_2(k,l),\mathcal P_2(k,l)\subset P_2(k,l)$: the sets of noncrossing matching pairings, noncrossing pairings, matching pairings, and all pairings.
\end{enumerate}
\end{definition} 

Here the rotation operation is by definition the clockwise one, which transforms the partitions $\pi\in P_{even}(k,l)$ into partitions $\eta\in P_{even}(l+k)$, with the convention that when rotating the upper legs, all the $\circ,\bullet$ symbols get reversed, $\circ\leftrightarrow\bullet$.

We denote by $\mathcal{NC}_{even}\subset NC_{even},\mathcal P_{even}\subset P_{even}$ and by $\mathcal{NC}_2\subset NC_2,\mathcal P_2\subset P_2$ the sets formed by the above partitions and pairings, by taking a disjoint union over $k,l$. 

As a first observation, the relation between these sets is as follows:

\begin{proposition}
We have a diagram as follows, with all the arrows being inclusions:
$$\xymatrix@R=20pt@C=20pt{
&\mathcal P_2\ar[dd]\ar[dl]&&{\mathcal NC}_2\ar[dl]\ar[ll]\ar[dd]\\
\mathcal P_{even}\ar[dd]&&\mathcal{NC}_{even}\ar[ll]\ar[dd]\\
&P_2\ar[dl]&&NC_2\ar[dl]\ar[ll]\\
P_{even}&&NC_{even}\ar[ll]
}$$
In addition, this is an ``intersection diagram'', in the sense that the intersection  of any two objects $X,Y$ appears on the diagram, as the biggest object contained into $X,Y$.
\end{proposition}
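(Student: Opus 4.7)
The proof is essentially a verification that the eight sets under consideration form a Boolean lattice of dimension three, generated by three independent defining conditions on a partition $\pi \in P_{even}$: namely (a) being noncrossing, (b) being matching (each block, once rotated, contains equally many $\circ$ and $\bullet$), and (c) being a pairing (i.e.\ all blocks of size exactly $2$, as opposed to arbitrary even size). My plan is therefore to first establish the inclusions, and then identify the diagram with the cube of subsets of $\{a,b,c\}$.

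For the inclusions, I would simply read them off the definitions. The horizontal arrows $\mathrm{NC}_{\bullet} \hookrightarrow P_{\bullet}$ (with or without decorations) come from the fact that a noncrossing partition is a partition; the diagonal arrows $\mathcal{P}_{\bullet}\hookrightarrow P_{\bullet}$ come from the observation that imposing the matching condition only restricts the class; and the vertical arrows $P_2\hookrightarrow P_{even}$ (and their decorated versions) follow because a block of size $2$ is an even block. Compositions of such arrows cover all remaining cases in the cube, so nothing else needs checking here.

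For the intersection property, I would observe that each of the eight objects is cut out of $P_{even}$ by a subset $S \subset \{a,b,c\}$ of the three conditions $(a)$, $(b)$, $(c)$ above, via
\[
X_S = \{\pi \in P_{even} \mid \pi \text{ satisfies condition } x \text{ for every } x \in S\}.
\]
One checks immediately that the eight objects listed in the diagram correspond to the eight subsets $S \subset \{a,b,c\}$, and that the inclusions in the diagram correspond exactly to reverse inclusions of the indexing sets $S$. Thus $X_S \cap X_{S'} = X_{S \cup S'}$ tautologically from the definition: a partition satisfies all conditions in $S$ and all in $S'$ iff it satisfies all conditions in $S \cup S'$. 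Since $S \cup S'$ is again a subset of $\{a,b,c\}$, the intersection $X_S \cap X_{S'}$ lies on the diagram, and by the same token it is the biggest such set contained in both $X_S$ and $X_{S'}$.

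There is really no serious obstacle here; the only thing one could worry about is whether the three conditions are genuinely independent, in the sense that all eight classes are actually distinct. This is transparent, however: for any $N\geq 4$ one can exhibit explicit partitions realizing each combination of the three properties (for instance a crossing non-matching pair of blocks of size $4$ witnesses the differences among the various relaxations), so the cube is non-degenerate. Once this is noted, the statement follows from the Boolean reformulation above.
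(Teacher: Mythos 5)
Your proof is correct, and it is in essence the same as the paper's, which simply states that both assertions are clear from the definitions. Your Boolean-cube framing (the eight sets as the subsets of the three independent conditions: noncrossing, matching, pairing) is a nice way to make that ``clear'' fully explicit, but it is the same underlying argument spelled out, not a different route.
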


\begin{proof}
The first assertion is clear from definitions, and the intersection diagram statement is clear as well, once again starting from the definitions.
\end{proof}

Following \cite{bsp}, \cite{tw1}, let us introduce now the following notion:

\begin{definition}
A set $D=\bigsqcup_{k,l}D(k,l)$ with $D(k,l)\subset P_{even}(k,l)$ is called a category of partitions when it is stable under the following operations:
\begin{enumerate}
\item The horizontal concatenation operation $\otimes$.

\item The vertical concatenation operation $\circ$, performed when the middle colored integers match, with the convention that the resulting closed loops are removed.

\item The upside-down turning operation $*$, with convention that when turning, all the colors are switched, $\circ\leftrightarrow\bullet$.
\end{enumerate}
\end{definition}

As a basic example here, the 8 sets of partitions appearing in Proposition 2.3 above are all categories of partitions. This follows indeed from definitions.

We have now all the needed ingredients for introducing the notion of easiness:

\begin{definition}
A quantum group $H_N\subset G\subset U_N^+$ is called easy when we have
$$Hom(u^{\otimes k},u^{\otimes l})=span\left(T_\pi\Big|\pi\in D(k,l)\right)$$
for any $k,l$, for a certain category of partitions $\mathcal{NC}_2\subset D\subset P_{even}$, where
$$T_\pi(e_{i_1}\otimes\ldots\otimes e_{i_k})=\sum_{j_1\ldots j_l}\delta_\pi\begin{pmatrix}i_1&\ldots&i_k\\ j_1&\ldots&j_l\end{pmatrix}e_{j_1}\otimes\ldots\otimes e_{j_l}$$
with the value of $\delta_\pi\in\{0,1\}$ depending on whether the indices fit or not.
\end{definition}

As already mentioned above, the notion of easiness is in fact a bit more general than this, dealing with quantum groups $S_N\subset G\subset U_N^+$. For the purposes of the present paper, where all relevant quantum groups will contain $H_N$, the above definition is the one that we need. We refer to \cite{bsp}, \cite{fre}, \cite{rwe}, \cite{tw2} for full details regarding the easy quantum group theory, and for \cite{mal} for the needed technical background.

Now back to our geometric considerations, our first task is that of comparing the easiness properties of $G,K$. Let us begin with the following standard result:

\begin{proposition}
For an easy quantum group $H_N\subset G\subset U_N^+$, coming from a category of partitions ${\mathcal NC}_2\subset D\subset P_{even}$, we have:
\begin{eqnarray*}
C(G)&=&C(U_N^+)\Big/\left<T_\pi\in Hom(u^{\otimes k},u^{\otimes l})\Big|\forall k,l,\forall\pi\in D(k,l)\right>\\
D(k,l)&=&\left\{\pi\in P_{even}(k,l)\Big|T_\pi\in Hom(u^{\otimes k},u^{\otimes l})\right\}
\end{eqnarray*}
Moreover, in the first formula we can use if we want only partitions $\pi$ generating $D$.
\end{proposition}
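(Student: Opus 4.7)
The plan is to derive both formulas from Tannakian duality (\cite{wo2}, \cite{mal}), which says that a closed quantum subgroup of $U_N^+$ is uniquely determined by its Tannakian category, and that such a category is produced by taking the closure, under tensor, composition, and adjoint, of any specified set of intertwiners. The bridge between partitions and quantum groups is the functoriality of the map $\pi\mapsto T_\pi$ with respect to the three category-of-partitions operations, namely
\begin{align*}
T_{\pi\otimes\sigma}=T_\pi\otimes T_\sigma,\quad T_{\pi\circ\sigma}=N^{c(\pi,\sigma)}T_\pi T_\sigma,\quad T_{\pi^*}=T_\pi^*,
\end{align*}
where $c(\pi,\sigma)$ counts the closed loops removed during vertical concatenation.

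For the first formula, I would let $\widetilde{G}\subset U_N^+$ be the quantum group presented by the relations on the right-hand side. By Tannakian duality, its category of intertwiners $\mathcal{C}_{\widetilde{G}}$ is the smallest tensor $*$-subcategory of $\mathcal{C}_{U_N^+}$ containing $\{T_\pi:\pi\in D\}$. Since $D$ is a category of partitions, the functoriality above shows that this closure is already equal to $\mathrm{span}\{T_\pi:\pi\in D\}$, which by the easiness hypothesis coincides with $\mathcal{C}_G$. Tannakian duality then gives $\widetilde{G}=G$. The ``moreover'' assertion drops out of the same argument: starting from a generating subset $D_0\subset D$, the functoriality relations manufacture every $T_\pi$ with $\pi\in D$ inside $\mathcal{C}_{\widetilde{G}}$, so imposing relations only for $\pi\in D_0$ yields the same quantum group.

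For the second formula, I would set $D'(k,l):=\{\pi\in P_{even}(k,l):T_\pi\in Hom(u^{\otimes k},u^{\otimes l})\}$, so that $D\subset D'$ is tautological. The functoriality displayed above also shows at once that $D'$ is stable under $\otimes,\circ,*$, hence is itself a category of partitions. To conclude $D'=D$ I must establish the reverse inclusion, which, once the easiness hypothesis is unwound, amounts to the statement that $T_\pi\in\mathrm{span}\{T_\sigma:\sigma\in D(k,l)\}$ forces $\pi\in D(k,l)$. I expect this to be the main obstacle: it rests on the linear independence of the family $\{T_\sigma:\sigma\in P_{even}(k,l)\}$ in $\mathcal{L}((\mathbb{C}^N)^{\otimes|k|},(\mathbb{C}^N)^{\otimes|l|})$, a standard fact (\cite{bsp}, \cite{tw2}) valid for $N$ sufficiently large relative to $|k|+|l|$. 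Granted this independence, the coefficient of $T_\pi$ in any expression as a combination of $T_\sigma$ with $\sigma\in D$ must be $1$ with all others zero, placing $\pi\in D(k,l)$ as required.
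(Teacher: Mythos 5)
Your argument is correct and fills in precisely the standard Tannakian reasoning that the paper delegates wholesale to the literature (the paper's ``proof'' of this proposition is a one-line citation to \cite{mal}, with no explicit argument). Two small remarks: the vertical-concatenation rule is $T_\pi T_\sigma=N^{c(\pi,\sigma)}T_{\pi\circ\sigma}$ rather than the form you wrote, though the nonzero scalar is immaterial to your span argument; and you rightly single out the linear independence of $\{T_\sigma:\sigma\in P_{even}(k,l)\}$, valid for $N\geq|k|+|l|$, as the nontrivial ingredient behind the second formula --- a standing hypothesis in the easy quantum group framework that the paper also leaves implicit.
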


\begin{proof}
All the assertions in the statement follow indeed from the general theory of the easy quantum groups, as explained for instance in \cite{mal}. 
\end{proof}

We will need as well the following result, which is standard as well:

\begin{proposition}
Given two easy quantum groups $H_N\subset G,K\subset U_N^+$, coming respectively from categories of partitions ${\mathcal NC}_2\subset D,E\subset P_{even}$, we have
\begin{eqnarray*}
K=G\cap K_N^+&\iff&E=<D,{\mathcal NC}_{even}>\\
G=<K,O_N>&\iff&D=E\cap P_2
\end{eqnarray*}
where $\cap$ and $<,>$ are the usual intersection operation, and object generated by operation, for the closed subgroups of $U_N^+$, and for the subcategories of $P_{even}$.
\end{proposition}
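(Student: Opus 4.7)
The plan is to translate each equivalence into a categorical assertion and then to invoke the uniqueness part of Proposition 2.6: two easy quantum groups coincide iff their categories of partitions coincide. Two external facts will be used throughout: $K_N^+$ is easy with category $\mathcal{NC}_{even}$, and $O_N$ is easy with category $P_2$ (both standard, by Brauer \cite{bra} for the latter and \cite{bbc}, \cite{bve} for the former).

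For the first equivalence, I would show that $G \cap K_N^+$ is itself easy, with partition category $\langle D, \mathcal{NC}_{even}\rangle$. By the first formula of Proposition 2.6, $C(G)$ is the quotient of $C(U_N^+)$ by the relations $T_\pi \in Hom(u^{\otimes k}, u^{\otimes l})$ for $\pi \in D$, and similarly for $K_N^+$ with $\pi \in \mathcal{NC}_{even}$. By the definition of intersection recalled after Definition 1.8, $C(G \cap K_N^+)$ is obtained by imposing both families of relations simultaneously. Since $\pi \mapsto T_\pi$ intertwines the categorical operations $\otimes, \circ, *$ on partitions with tensor product, composition and adjoint of intertwiners, imposing the relations for $\pi$ in a set $X$ is equivalent to imposing them for $\pi$ in the generated category $\langle X\rangle$. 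Applied with $X = D \cup \mathcal{NC}_{even}$, this identifies $G \cap K_N^+$ as easy with category $\langle D, \mathcal{NC}_{even}\rangle$, and uniqueness converts this into the stated equivalence.

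For the second equivalence I would work on the Tannakian side: $\langle K, O_N\rangle$ has by construction Tannakian category $\mathcal{C}_K \cap \mathcal{C}_{O_N}$, and the goal is to show this equals the span of the $T_\pi$ for $\pi \in E \cap P_2$, space by space. The inclusion $\supset$ is immediate, while for $\subset$ one needs
$$\mathrm{span}(T_\pi : \pi \in E(k,l)) \cap \mathrm{span}(T_\pi : \pi \in P_2(k,l)) \subset \mathrm{span}(T_\pi : \pi \in (E \cap P_2)(k,l)).$$
This is the main technical obstacle: it is a linear-algebra assertion requiring some control over linear relations among the $T_\pi$. For $N$ large relative to $|k|+|l|$ the family $\{T_\pi\}_{\pi \in P_{even}(k,l)}$ is linearly independent and the inclusion is immediate; for small $N$ distinct partitions can produce the same operator, and one must either pass to the quotient of $P_{even}$ by this collapse or observe that the identifications occur identically on both sides of the intersection. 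This subtlety is handled in the standard easy quantum group literature, in particular in Malacarne \cite{mal}, whose framework I would invoke directly. Once the inclusion is established, $\langle K, O_N\rangle$ is easy with category $E \cap P_2$, and the equivalence again follows by uniqueness.
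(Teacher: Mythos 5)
Your overall strategy---reduce both equivalences to Proposition 2.6 plus the definitions of $\cap$ and $\langle\,,\rangle$ recalled after Definition 1.7---is indeed the paper's own strategy, and the route you take in each case (joined ideals and categoricality of $\pi\mapsto T_\pi$ for the first equivalence, Tannakian intersection for the second) is the one the paper's terse proof is gesturing at.

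The subtlety you flag in the second equivalence is real: one needs
$$\mathrm{span}(T_\pi:\pi\in E(k,l))\cap\mathrm{span}(T_\pi:\pi\in P_2(k,l))\subset\mathrm{span}\bigl(T_\pi:\pi\in(E\cap P_2)(k,l)\bigr),$$
which is automatic when the $T_\pi$, $\pi\in P_{even}(k,l)$, are linearly independent (so for $N$ large relative to $|k|+|l|$) but not obviously so in general, owing to possible collapses among the $T_\pi$. The paper's one-line proof does not address this. You are right to isolate it.

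What you should also notice, though, is that the \emph{same} collapse phenomenon is hidden in your treatment of the first equivalence. Proposition 2.6 normalizes $D$ to be saturated, i.e. $D(k,l)=\{\pi:T_\pi\in Hom(u^{\otimes k},u^{\otimes l})\}$. Your argument shows that $\mathcal{C}_{G\cap K_N^+}=\mathrm{span}(T_\pi:\pi\in\langle D,\mathcal{NC}_{even}\rangle)$, from which the saturated category of $G\cap K_N^+$ is the \emph{saturation} of $\langle D,\mathcal{NC}_{even}\rangle$---not $\langle D,\mathcal{NC}_{even}\rangle$ itself unless that category is already saturated. So the ``uniqueness'' step at the end of your first paragraph tacitly assumes that categories with the same span coincide, which is exactly the linear-independence issue you flag in the second. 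Both equivalences, as stated, need either $N$ large, or an a priori observation that the categories $\langle D,\mathcal{NC}_{even}\rangle$ and $E\cap P_2$ arising here are already saturated (which is what one checks directly in the examples the paper actually uses), or a restatement at the level of Tannakian categories rather than partition categories. Citing Malacarne \cite{mal} for the Tannakian framework is fine, but it does not by itself dispatch this point; the $N$-dependence of linear independence of the $T_\pi$ is discussed more directly in \cite{bsp} and \cite{rwe}.
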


\begin{proof}
This follows indeed from Proposition 2.6 above, and from the definition of the operations $\cap$ and $<,>$ in the quantum group case.
\end{proof}

We have now all the needed ingredients for formulating:

\begin{proposition}
For a geometry $(S,T,K,G)$, the following are equivalent:
\begin{enumerate}
\item $G$ is easy, coming from a category of pairings ${\mathcal NC}_2\subset D\subset P_2$.

\item $K$ is easy, coming from a category of partitions ${\mathcal NC}_{even}\subset E\subset P_{even}$.
\end{enumerate} 
If these conditions are satisfied, the categories $D,E$ must satisfy the conditions $D=E\cap P_2$ and $E=<D,{\mathcal NC}_{even}>$, and we call our geometry easy. 
\end{proposition}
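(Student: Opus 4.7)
The plan is to reduce the statement to Proposition 2.7 via the geometry axioms $K=G\cap K_N^+$ and $G=<K,O_N>$, together with the standard easiness of the reference objects: $O_N$ comes from $P_2$ and $K_N^+$ comes from $\mathcal{NC}_{even}$. The key bridge principle, essentially the content underlying Proposition 2.7, is that a Tannakian intersection of two easy quantum groups is again easy with category equal to the join of the two partition categories, and dually that an object of the form $<G,H>$ between easy quantum groups is easy with category equal to the intersection of their categories.

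For the direction $(1)\Rightarrow(2)$, assume $G$ is easy with a category of pairings $\mathcal{NC}_2\subset D\subset P_2$. Then $K=G\cap K_N^+$ is easy with category $E:=<D,\mathcal{NC}_{even}>$. The two inclusions to be checked are automatic: $\mathcal{NC}_{even}\subset E$ by construction, and $E\subset P_{even}$ because $D\subset P_2\subset P_{even}$ and $\mathcal{NC}_{even}\subset P_{even}$, with $P_{even}$ itself a category of partitions stable under $\otimes$, $\circ$ and $*$.

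For $(2)\Rightarrow(1)$, assume $K$ is easy with $\mathcal{NC}_{even}\subset E\subset P_{even}$. Then $G=<K,O_N>$ is easy with category $D:=E\cap P_2$. Here $D\subset P_2$ is trivial, and $\mathcal{NC}_2\subset D$ follows from the computation $\mathcal{NC}_2=\mathcal{NC}_{even}\cap P_2\subset E\cap P_2=D$, where the first identification is part of the intersection diagram of Proposition 2.3. Finally, once both (1) and (2) are known to hold, the compatibility formulas $D=E\cap P_2$ and $E=<D,\mathcal{NC}_{even}>$ are forced by a direct application of Proposition 2.7 to the axioms $K=G\cap K_N^+$ and $G=<K,O_N>$.

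Nothing here is technically deep; the only point requiring care is keeping the contravariance straight — quantum-group intersections enlarge the partition categories while Tannakian joins shrink them — and checking that each constructed category lies in the prescribed interval (contained in $P_{even}$ or $P_2$, containing $\mathcal{NC}_{even}$ or $\mathcal{NC}_2$), which reduces entirely to the intersection diagram of Proposition 2.3.
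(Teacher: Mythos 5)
Your proposal is correct and follows essentially the same route as the paper: reduce both implications to Proposition 2.7 via the axioms $K=G\cap K_N^+$ and $G=<K,O_N>$, using that intersections and joins of easy quantum groups are easy with the corresponding join/intersection of categories. The extra care you take in verifying the interval conditions $\mathcal{NC}_2\subset D\subset P_2$ and $\mathcal{NC}_{even}\subset E\subset P_{even}$ (via the intersection diagram of Proposition 2.3) is a detail the paper leaves implicit, but is a welcome clarification rather than a divergence.
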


\begin{proof}
This follows from Proposition 2.7 above, which shows that when $G$ is easy, then so is $K=G\cap K_N^+$, and that when $K$ is easy, then so is $G=<K,O_N>$, with the corresponding categories of partitions $D,E$ being given by the formulae there.
\end{proof}

At the level of basic examples we have the following result, whose formulation for the unitary quantum groups only goes back to \cite{bb2}: 

\begin{theorem}
The basic $4$ geometries are all easy. More precisely, the corresponding quantum groups $K,G$ form the following intersection diagram,
$$\xymatrix@R=20pt@C=20pt{
&U_N\ar[rr]&&U_N^+\\
K_N\ar[rr]\ar[ur]&&K_N^+\ar[ur]\\
&O_N\ar[rr]\ar[uu]&&O_N^+\ar[uu]\\
H_N\ar[uu]\ar[ur]\ar[rr]&&H_N^+\ar[uu]\ar[ur]
}$$
and the corresponding categories of partitions are those in Proposition 2.3 above.
\end{theorem}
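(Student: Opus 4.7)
The plan is to leverage Proposition 2.9 together with Proposition 2.3, so that all that really needs to be checked is the easiness of the eight individual quantum groups appearing in the diagram, with the prescribed categories of partitions; the matching structure $(K,G)$ and the intersection diagram will then fall out for free.

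First I would list the eight easiness assertions to be established, matching quantum groups with categories as follows: $O_N\leftrightarrow P_2$ (Brauer), $O_N^+\leftrightarrow NC_2$ (the free orthogonal case of Banica--Collins), $U_N\leftrightarrow \mathcal P_2$ and $U_N^+\leftrightarrow \mathcal{NC}_2$ (the matching/colored analogues), and on the reflection side $H_N\leftrightarrow P_{even}$, $H_N^+\leftrightarrow NC_{even}$, $K_N\leftrightarrow \mathcal P_{even}$, $K_N^+\leftrightarrow \mathcal{NC}_{even}$ (standard from the Banica--Bichon--Curran and Banica--Vergnioux results invoked after Definition 1.4). Each of these is documented in the references cited in the excerpt, so I would collect them by reference rather than reprove anything; the point is simply to observe that the category assignments coincide exactly with those listed in Proposition 2.3.

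Next I would verify, for each of the four geometries, that the pair $(D,E)$ of categories satisfies the compatibility conditions of Proposition 2.9, namely $D=E\cap P_2$ and $E=\langle D,\mathcal{NC}_{even}\rangle$. For the two geometries already treated in the existing literature ($H_N/O_N$ and $H_N^+/O_N^+$) this is classical; for the complex geometries ($K_N/U_N$ and $K_N^+/U_N^+$) one checks directly from the definition of ``matching'' that a matching pairing is precisely a pairing in $\mathcal P_{even}$, and that every matching even partition is obtained from matching pairings by concatenation with the basic blocks generating $\mathcal{NC}_{even}$. Applying Proposition 2.9 then upgrades each $(S,T,K,G)$ to an \emph{easy} geometry in our sense, which in particular yields the relations $K=G\cap K_N^+$ and $G=\langle K,O_N\rangle$ needed for the axioms of Definition 1.7 (already known from Theorem 1.8).

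Finally, I would read off the cube-shaped intersection diagram from the corresponding intersection diagram of categories in Proposition 2.3. Since the correspondence $D\mapsto G_D$ between categories of partitions and easy quantum groups is inclusion-reversing and sends intersections to joins $\langle\cdot,\cdot\rangle$ and joins to intersections, the ``intersection diagram'' property stated at the end of Proposition 2.3 transports verbatim into the statement that, in the cube, each vertex equals the intersection of its two immediate successors (e.g.\ $H_N=O_N\cap K_N$, $O_N=U_N\cap O_N^+$, $K_N^+=U_N^+\cap\langle K_N,H_N^+\rangle$, and so on). The main obstacle, if any, is purely bookkeeping: one must be sure that the eight category assignments are the ones actually matching Proposition 2.3's diagram, so that the two intersection cubes line up; no genuinely new quantum-group computation is required.
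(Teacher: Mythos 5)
Your proposal is correct and follows essentially the same route as the paper: both reduce the claim to the eight well-known easiness facts (Brauer for $O_N,U_N$, free Brauer for $O_N^+,U_N^+$, and the reflection-group results of Banica--Bichon--Collins and Banica--Vergnioux) and then read the intersection structure off the definitions and Proposition 2.3. Two small slips worth noting: the compatibility criterion you invoke is Proposition 2.8, not 2.9, and your last illustrative equality $K_N^+=U_N^+\cap\langle K_N,H_N^+\rangle$ is not really an instance of the intersection-diagram property (the meaningful instances are of the form $H_N=O_N\cap K_N$, $K_N=U_N\cap K_N^+$, etc., where a vertex is the meet of two vertices above it).
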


\begin{proof}
The fact that we have indeed an interesection diagram follows from definitions, with the relation between the $K,G$ groups coming from the axiom $K=G\cap U_N^+$. 

Regarding now the easiness assertion, here the results for $O_N,U_N$ are well-known, due to Brauer \cite{bra}, the results for $O_N^+,U_N^+$ are well-known free versions of Brauer's theorem, and all this material is explained in detail in \cite{bb2}. As for the results for the corresponding quantum reflection groups, these are known from \cite{bbc}, \cite{bve}.
\end{proof}

Summarizing, we have now an extension of the formalism in \cite{bb2}, taking into account the quantum reflection groups as well, and covering the main 4 examples. 

We should mention that our axioms here do not cover a whole number of interesting examples of noncommutative spheres, such as those considered in \cite{cdu}, \cite{cla}, \cite{ddl}. However, the theory of easy quantum groups, and so our present formalism as well, are quite flexible, as explained for instance in \cite{cwe}, \cite{fre}, \cite{tw2}, and so the question of gradually extending the considerations here makes sense, and might solve some of these problems.

\section{Main examples}

In order to construct further examples, the idea is to look for intermediate objects for the inclusions $U_N\subset U_N^+$ and $O_N^+\subset U_N^+$, and then to take intersections. There are several possible choices here, and the ``standard'' choices lead to the following result:

\begin{proposition}
We have intersection diagrams of easy quantum groups as follows,
$$\xymatrix@R=15mm@C=15mm{
K_N\ar[r]&K_N^*\ar[r]&K_N^+\\
\mathbb TH_N\ar[r]\ar[u]&\mathbb TH_N^*\ar[r]\ar[u]&\mathbb TH_N^+\ar[u]\\
H_N\ar[r]\ar[u]&H_N^*\ar[r]\ar[u]&H_N^+\ar[u]}
\qquad\xymatrix@R=18mm@C=5mm{\\:&\\&\\}\ 
\xymatrix@R=15mm@C=15mm{
U_N\ar[r]&U_N^*\ar[r]&U_N^+\\
\mathbb TO_N\ar[r]\ar[u]&\mathbb TO_N^*\ar[r]\ar[u]&\mathbb TO_N^+\ar[u]\\
O_N\ar[r]\ar[u]&O_N^*\ar[r]\ar[u]&O_N^+\ar[u]}$$
with $U_N^*\subset U_N^+$ coming via the relations $abc=cba$, with $a,b,c\in\{u_{ij},u_{ij}^*\}$, with $\mathbb TO_N^+\subset U_N^+$ coming via the relations $ab^*=a^*b$, with $a,b\in\{u_{ij}\}$, and with the quantum groups on the left being obtained from those on the right, by intersecting with $K_N^+$.
\end{proposition}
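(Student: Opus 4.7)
My plan is to identify the category of pairings for each of the nine quantum groups in the right-hand diagram, and then to deduce the left-hand diagram via Proposition 2.7.

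For the corners, Theorem 2.9 supplies the categories $\mathcal{P}_2, \mathcal{NC}_2, P_2, NC_2$ for $U_N, U_N^+, O_N, O_N^+$. The half-liberations $O_N^*$ and $U_N^*$ are known easy quantum groups from \cite{bb1}, \cite{bdd}; their categories are obtained by enlarging $NC_2, \mathcal{NC}_2$ with a single ``half-commutation'' pairing in $P_2(3,3)$ which, through Proposition 2.6, implements the relation $abc=cba$ on the generators. For the middle row, $\mathbb{T}O_N^+\subset U_N^+$ is the easy quantum group whose category is obtained by adjoining to $\mathcal{NC}_2$ a single ``mismatched'' pair partition forcing the relation $ab^*=a^*b$; the verification is again through Proposition 2.6 applied to the corresponding $T_\pi$. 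The remaining middle entries $\mathbb{T}O_N$ and $\mathbb{T}O_N^*$ are defined as the intersections $U_N\cap\mathbb{T}O_N^+$ and $U_N^*\cap\mathbb{T}O_N^+$, and inherit easiness from the fact that the intersection of two easy quantum groups is easy.

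Once the categories are in place, the intersection property of the right-hand diagram follows because, for easy quantum groups, $G\cap H$ is easy with category generated by $D_G\cup D_H$: checking each of the four elementary squares then reduces to a routine identity between categories of pairings built from $\mathcal{NC}_2$, the half-commutation partition, and the mismatched pair partition. For the left-hand diagram, I apply Proposition 2.7: intersecting any easy $G\subset U_N^+$ with $K_N^+$ corresponds, at the category level, to adjoining $\mathcal{NC}_{even}$, and the resulting category sits inside $P_{even}$, yielding an easy quantum reflection group $H_N\subset K\subset K_N^+$. The intersection property on the left is then deduced from that on the right through the elementary identity $(G_1\cap K_N^+)\cap(G_2\cap K_N^+)=(G_1\cap G_2)\cap K_N^+$, which ensures that the operation ``intersect with $K_N^+$'' preserves intersection diagrams.

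The main obstacle is identifying the single-partition generator for the category defining $\mathbb{T}O_N^+$, and verifying that the linear map $T_\pi$ produced by this partition translates, via Proposition 2.6, exactly into the relation $ab^*=a^*b$ for $a,b\in\{u_{ij}\}$; once this is in hand, the analogous verification for $abc=cba$ is standard, and the remaining combinatorial identities between the six categories thus identified are straightforward.
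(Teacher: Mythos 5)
Your proposal is correct and follows essentially the same route as the paper: exhibit the generating partitions implementing the relations $abc=cba$ (the half-commutation crossing on three matching legs) and $ab^*=a^*b$ (the mismatched pair), conclude easiness of $U_N^*$ and $\mathbb{T}O_N^+$, build the remaining entries by taking intersections, and pass to the reflection diagram via Proposition~2.7. The paper's proof is more terse (it simply draws the two diagrams and states that the rest follows by intersections), while you spell out the easiness-preserving property of $\cap$ and the compatibility $(G_1\cap K_N^+)\cap(G_2\cap K_N^+)=(G_1\cap G_2)\cap K_N^+$, but the underlying argument is the same.
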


\begin{proof}
The relations $abc=cba$ with $a,b,c\in\{u_{ij},u_{ij}^*\}$, and $ab^*=a^*b$  with $a,b\in\{u_{ij}\}$, correspond respectively to the following two diagrams:
$$\xymatrix@R=12mm@C=8mm{
\times\ar@{-}[drr]&\times\ar@{-}[d]&\times\ar@{-}[dll]
\\
\times&\times&\times}\qquad\ \qquad\ \qquad
\xymatrix@R=12mm@C=8mm{
\circ\ar@{-}[d]&\bullet\ar@{-}[d]
\\
\bullet&\circ}$$

Here the $\times$ symbols stand for the fact that we can use there any combinations of the symbols $\circ$ and $\bullet$, provided that the resulting pairing is matching.

We conclude that $U_N^*,\mathbb TO_N^+\subset U_N^+$ are both easy quantum groups, and by taking intersections, as indicated, we obtain 9+9 easy quantum groups, as stated.
\end{proof}

Regarding now the associated categories of partitions, we have:

\begin{proposition}
The categories of partitions and pairings for the above $9+9$ easy quantum groups are given by the following intersection diagrams,
$$\xymatrix@R=15mm@C=11mm{
\mathcal P_{even}\ar[d]&\mathcal P_{even}^*\ar[l]\ar[d]&\mathcal{NC}_{even}\ar[l]\ar[d]\\
\bar{P}_{even}\ar[d]&\bar{P}_{even}^*\ar[l]\ar[d]&\bar{NC}_{even}\ar[l]\ar[d]\\
P_{even}&P_{even}^*\ar[l]&NC_{even}\ar[l]}
\ \quad\xymatrix@R=18mm@C=5mm{\\ :&\\&\\}\ 
\xymatrix@R=15mm@C=16mm{
\mathcal P_2\ar[d]&\mathcal P_2^*\ar[l]\ar[d]&\mathcal{NC}_2\ar[l]\ar[d]\\
\bar{P}_2\ar[d]&\bar{P}_2^*\ar[l]\ar[d]&\bar{NC}_2\ar[l]\ar[d]\\
P_2&P_2^*\ar[l]&NC_2\ar[l]}$$
with the categories $P_{even}^*,\bar{P}_{even}\subset P_{even}$ consisting of the partitions which when flattened, have the same number of $\circ,\bullet$ symbols in each block, when relabelling the legs $\circ\bullet\circ\bullet\ldots$, respectively which have the same number of $\circ,\bullet$ symbols, and with the categories on the right being obtained from those on the left by intersecting with $P_2$.
\end{proposition}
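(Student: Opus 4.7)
My plan is to identify each of the 18 easy quantum groups in Proposition 3.1 with the claimed category of partitions or pairings, and then verify that both diagrams are intersection diagrams, related as claimed. I would proceed in four steps.

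The first step is to record the eight corner entries. By Theorem 2.9, the quantum groups $O_N, O_N^+, U_N, U_N^+$ on the right-hand diagram correspond to $P_2, NC_2, \mathcal{P}_2, \mathcal{NC}_2$, and $H_N, H_N^+, K_N, K_N^+$ on the left-hand diagram correspond to $P_{even}, NC_{even}, \mathcal{P}_{even}, \mathcal{NC}_{even}$ respectively.

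The second step handles the half-liberation column. The relations $abc = cba$ for $a,b,c \in \{u_{ij}, u_{ij}^*\}$ translate via Tannakian duality into conditions of the form $T_{h_k} \in Hom(u^{\otimes k}, u^{\otimes k'})$, where $h_k$ is the half-liberating three-string pairing connecting top position $i$ to bottom position $4-i$, with all eight colorings $k \in \{\circ, \bullet\}^3$. Adjoining these to the relevant corner categories produces $\mathcal{P}_2^*, P_2^*, P_{even}^*$ and their variants. To match with the combinatorial description in the statement, I would verify that the class of partitions satisfying the ``equal $\circ, \bullet$ counts per block under $\circ\bullet\circ\bullet\ldots$ relabelling'' condition contains $\mathcal{NC}_2$ and $h$, is stable under the category operations $\otimes, \circ, *$, and conversely that every such partition admits an explicit decomposition over $h$ and noncrossing blocks, by induction on size.

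The third step handles the middle row. The relations $ab^* = a^*b$ for $a, b \in \{u_{ij}\}$ correspond to adjoining a single partition $\bar{\eta}$, namely the two-point pairing relating a $\circ\bullet$ upper pair to a $\bullet\circ$ lower pair, to the generating sets. The category generated by $\mathcal{NC}_2$ together with $\bar{\eta}$ is directly seen to consist of noncrossing pairings with globally balanced $\circ, \bullet$ counts, i.e.\ $\bar{NC}_2$; similarly inside $P_{even}$ one obtains $\bar{P}_{even}$ as described. The remaining middle-row entries follow by taking suitable intersections with the previously identified categories.

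The fourth step verifies the intersection-diagram structure. By Proposition 2.7, intersection of easy quantum subgroups corresponds to the join of their categories of partitions, so the intersection diagrams of Proposition 3.1 propagate to the present categorical diagrams. The relation between the two diagrams---that the right one is obtained from the left by intersecting with $P_2$---reflects at the categorical level the quantum group relation $G = \langle K, O_N\rangle$, translated as $D = E \cap P_2$ again via Proposition 2.7. The main technical obstacle lies in the combinatorial part of the second step: showing that the stated class $P_{even}^*$ coincides exactly with the category generated by $h$ over $\mathcal{NC}_2$, which requires an explicit inductive decomposition of arbitrary elements of the class, following the template of \cite{bsp}, \cite{bb1}, \cite{bb2}, \cite{bdd}.
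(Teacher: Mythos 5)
Your proposal follows essentially the same strategy as the paper: record the four corners from Theorem 2.9, identify the half-liberation column and the $\mathbb T$-extension row with their generating pairings (the three-string crossing $h$ and the color-switching pair $\bar\eta$), and propagate the intersection structure to the categories via Proposition 2.7 (where $K=G\cap K_N^+$ becomes $E=\langle D,\mathcal{NC}_{even}\rangle$, and $G=\langle K,O_N\rangle$ becomes $D=E\cap P_2$). The paper handles the key combinatorial identification of $P_2^*$, $\bar P_2$ etc.\ by citing \cite{bb2}, \cite{ba2}, \cite{ba3}, \cite{rwe}, whereas you sketch the inductive decomposition that those references carry out; one small slip is that $P_{even}^*$ is generated by $h$ together with $\mathcal{NC}_{even}$ (not $\mathcal{NC}_2$, which would only produce the pairing category $P_2^*$), but this does not affect the overall argument.
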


\begin{proof}
As explained in Theorem 2.9 above, the results at the 4 corners hold indeed. Regarding the remaining computations, these are well-known as well. For the diagram on the right, the result here is from \cite{bb2}, the idea being as follows:

(1) Regarding the intermediate quantum group $O_N\subset O_N^*\subset O_N^+$, here the defining relations, namely $[ab,cd]=0$, correspond indeed to the category $NC_2\subset P_2^*\subset P_2$ constructed in the statement. This is something well-known, explained in \cite{bb2}.

(2) Regarding the intermediate quantum group $O_N\subset \mathbb TO_N\subset U_N$, here the defining relations, namely $ab^*=a^*b$, correspond indeed to the category $\mathcal P_2\subset\bar{P}_2^*\subset P_2$ constructed in the statement. Once again this is something well-known, explained in \cite{bb2}.

(3) Finally, regarding the various intersections, and notably the quantum group $U_N^*$ from \cite{bdu}, the axioms hold here as well, due to the various results in \cite{ba1}, \cite{bb2}. Indeed, the diagrams for $U_N^*$ are well-known to be those for $O_N^*$, with matching colorings.

Regarding the diagram on the left, here the results are known from \cite{ba2}, \cite{ba3}, \cite{rwe}. We can proceed as well directly, by using our axioms. Indeed, we know that the quantum reflection groups are given by $K=G\cap K_N^+$. Thus Proposition 2.7 applies, and shows that in each of the cases under investigation, $K$ is easy, having $E=<D,{\mathcal NC}_{even}>$ as associated category of partitions. But this gives the categories in the statement.
\end{proof}

We are now in position of stating our main result, as follows:

\begin{theorem}
We have $9$ main easy geometries, with spheres and tori as follows,
$$\xymatrix@R=13mm@C=11mm{
S^{N-1}_\mathbb C\ar[r]&S^{N-1}_{\mathbb C,*}\ar[r]&S^{N-1}_{\mathbb C,+}\\
\mathbb TS^{N-1}_\mathbb R\ar[r]\ar[u]&\mathbb TS^{N-1}_{\mathbb R,*}\ar[r]\ar[u]&\mathbb TS^{N-1}_{\mathbb R,+}\ar[u]\\
S^{N-1}_\mathbb R\ar[r]\ar[u]&S^{N-1}_{\mathbb R,*}\ar[r]\ar[u]&S^{N-1}_{\mathbb R,+}\ar[u]}
\ \ \ \ \ \xymatrix@R=18mm@C=5mm{\\ :&\\&\\}\ \xymatrix@R=13mm@C=13mm{
\mathbb T^N\ar[r]&\widehat{\mathbb Z^{\circ N}}\ar[r]&\widehat{F_N}\\
\mathbb T\mathbb Z_2^N\ar[r]\ar[u]&\mathbb T\widehat{\mathbb Z_2^{\circ N}}\ar[r]\ar[u]&\mathbb T\widehat{\mathbb Z_2^{*N}}\ar[u]\\
\mathbb Z_2^N\ar[r]\ar[u]&\widehat{\mathbb Z_2^{\circ N}}\ar[r]\ar[u]&\widehat{\mathbb Z_2^{*N}}\ar[u]}$$
with the various middle objects coming via the relations $abc=cba$ and $ab^*=a^*b$, imposed to the standard coordinates, and to their adjoints.
\end{theorem}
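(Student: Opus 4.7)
The plan is to construct the nine geometries by defining the spheres and tori via the same relations $abc=cba$ and $ab^*=a^*b$ used in Proposition 3.1, now applied to the standard coordinates $x_i$ and their adjoints rather than to $u_{ij}$. The four corners are already settled by Theorem 1.8, so the task reduces to verifying the axioms of Definition 1.7 for the five new geometries in the middle row, middle column, and center, after which easiness in the sense of Proposition 2.8 is automatic, since $K$ and $G$ are already easy by Proposition 3.1.

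The axioms $K=G\cap K_N^+$ and $G=\langle K,O_N\rangle$ can be dispatched simultaneously. The first is built into Proposition 3.1, which defines the left-column quantum groups by intersecting the right-column ones with $K_N^+$. The second translates, via Proposition 2.7, into the categorical identity $D=E\cap P_2$, which is visible in Proposition 3.2: the right-column categories of pairings are obtained from the left-column categories of partitions by intersecting with $P_2$.

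The torus axioms amount to direct computations at the algebra level. For the standard torus of $S$, imposing $x_ix_i^*=x_i^*x_i=1/N$ and tracking the residual relations on the rescaled unitary generators $u_i=\sqrt{N}x_i$ yields a group-dual presentation matching the torus in the statement: the $abc=cba$ relations produce the appropriate half-commutation constraints on the $u_i$, and the $ab^*=a^*b$ relations produce the $\mathbb T$-twist by a central unitary. The diagonal torus of $K$ and $G$ is computed analogously by setting $u_{ij}=0$ for $i\neq j$; the same relations, restricted to the diagonal generators $g_i=u_{ii}$, yield the same group presentation.

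The remaining axioms $G=G^+(S)$ and $K=G\cap G^+(T)$ form the main obstacle. The inclusion $G\subset G^+(S)$ is direct, since the defining relations of $G$ are tailored so that $\Phi(x_i)=\sum_ju_{ij}\otimes x_j$ preserves the sphere relations. The reverse inclusion is the delicate point: one must show that the affine-action requirement forces the defining relations of $G$ on the coordinates of $u$. In the easy setting this can be carried out via a Tannakian argument, relating the monomials in $x_i,x_i^*$ that vanish on $S$ to the operators $T_\pi$ attached to the category generating $G$, and invoking Proposition 2.6. The final axiom $K=G\cap G^+(T)$ follows by combining the computation of $G^+(T)$ from \cite{ba3}, recalled in the proof of Theorem 1.8, with the categorical intersection from Proposition 2.7; the $q=-1$ twist flavor of $G^+(T)$ adapts to the five new cases, reproducing $K$ on the nose.
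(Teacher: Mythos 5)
Your proposal follows essentially the same route as the paper: the four corners come from Theorem 1.8, axiom (1) is handled via Propositions 2.7 and 3.1--3.2, and the novel verification is $K=G\cap G^+(T)$, carried out by recalling $G^+(T)$ from \cite{ba3} and intersecting. The paper is more economical in that it simply delegates the torus and $G=G^+(S)$ verifications to \cite{bb2} rather than re-sketching them, and it computes the intersection $G\cap G^+(T)$ directly at the quantum-group level using \cite{ba2} rather than through the categorical lens of Proposition 2.7, but the underlying content is the same.
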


\begin{proof}
The $4$ results at the corners are already known, from Theorem 1.8 above. Regarding the remaining results, these are basically known since \cite{bb2}, with the exception of the verification of the $K=G\cap G^+(T)$ axiom, that we will perform now.

According to the various results in \cite{bb2} and \cite{ba3}, the quantum isometry groups $G^+(T)$ of the tori in the statement are given by:
$$\xymatrix@R=14mm@C=12mm{
\mathbb T^N\ar[r]&\widehat{\mathbb Z^{\circ N}}\ar[r]&\widehat{F_N}\\
\mathbb T\mathbb Z_2^N\ar[r]\ar[u]&\mathbb T\widehat{\mathbb Z_2^{\circ N}}\ar[r]\ar[u]&\mathbb T\widehat{\mathbb Z_2^{*N}}\ar[u]\\
\mathbb Z_2^N\ar[r]\ar[u]&\widehat{\mathbb Z_2^{\circ N}}\ar[r]\ar[u]&\widehat{\mathbb Z_2^{*N}}\ar[u]}
\ \ \ \ \ \xymatrix@R=18mm@C=5mm{\\ :&\\&\\}\ 
\xymatrix@R=15mm@C=13mm{
\bar{U}_N\ar[r]&\bar{U}_N^*\ar[r]&K_N^+\\
\mathbb T\bar{O}_N\ar[r]\ar[u]&\mathbb T\bar{O}_N^*\ar[r]\ar[u]&\mathbb TH_N^+\ar[u]\\
\bar{O}_N\ar[r]\ar[u]&\bar{O}_N^*\ar[r]\ar[u]&H_N^+\ar[u]}$$

Now by intersecting with the corresponding quantum groups $G$, and by using the various computations in \cite{ba2}, we obtain the following quantum groups:
$$\xymatrix@R=15mm@C=6mm{
U_N\cap\bar{U}_N\ar[r]&U_N^*\cap\bar{U}_N^*\ar[r]&U_N^+\cap K_N^+\\
\mathbb TO_N\cap\mathbb T\bar{O}_N\ar[r]\ar[u]&\mathbb TO_N^*\cap\mathbb T\bar{O}_N^*\ar[r]\ar[u]&\mathbb TO_N^+\cap \mathbb TH_N^+\ar[u]\\
O_N\cap\bar{O}_N\ar[r]\ar[u]&O_N^*\cap\bar{O}_N^*\ar[r]\ar[u]&O_N^+\cap H_N^+\ar[u]}
\ \ \ \xymatrix@R=18mm@C=5mm{\\ =&\\&\\}
\xymatrix@R=15mm@C=10mm{
K_N\ar[r]&K_N^*\ar[r]&K_N^+\\
\mathbb TH_N\ar[r]\ar[u]&\mathbb TH_N^*\ar[r]\ar[u]&\mathbb TH_N^+\ar[u]\\
H_N\ar[r]\ar[u]&H_N^*\ar[r]\ar[u]&H_N^+\ar[u]}$$

Thus, we have checked the extra axiom, and we are done.
\end{proof}

\section{Ergodicity issues}

The axioms that we have so far, concerning the quadruplets $(S,T,K,G)$, already provide us with a number of correspondences between the objects involved. To be more precise, the diagram of the known correspondences is as follows:
$$\xymatrix@R=50pt@C=60pt{
K\ar[r]\ar[d]&G\ar[l]\ar[dl]\\
T&S\ar[l]\ar[ul]\ar[u]
}$$

In order to have a complete diagram, we would need for instance arrows $T\to G$ and $G\to S$. The question of finding an arrow $T\to G$ will be discussed in the next section. As for the question of finding an arrow $G\to S$, we will discuss this issue here.

In the classical cases, the spheres naturally appear as homogeneous spaces, $O_N\to S^{N-1}_\mathbb R$ and $U_N\to S^{N-1}_\mathbb C$. So, let us begin with the following construction:

\begin{proposition}
Given a subgroup $G\subset U_N^+$, let $G\to S$ be first column space of $G$, in the sense that $C(S)\subset C(G)$ is the algebra generated by the variables $x_i=u_{i1}$.
\begin{enumerate}
\item We have an embedding $S\subset S^{N-1}_{\mathbb C,+}$.

\item We have an affine action $G\curvearrowright S$.
\end{enumerate}
In addition, for $G=O_N,U_N$ we obtain in this way the spheres $S=S^{N-1}_\mathbb R,S^{N-1}_\mathbb C$.
\end{proposition}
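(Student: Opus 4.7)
The plan is to verify the two structural statements first, and then to treat the examples $G=O_N,U_N$ by a direct algebraic argument combined with classical transitivity.

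For assertion (1), I would start from the biunitarity conditions $u^*u=uu^*=1$ and $u^t\bar{u}=\bar{u}u^t=1$ defining $C(U_N^+)$, which pass to $C(G)$. Reading off the $(1,1)$ entries one gets
$$\sum_iu_{i1}^*u_{i1}=1\quad\text{and}\quad\sum_iu_{i1}u_{i1}^*=1,$$
i.e.\ $\sum_ix_ix_i^*=\sum_ix_i^*x_i=1$. By the universal property of $C(S^{N-1}_{\mathbb C,+})$ there is thus a surjective $*$-morphism $C(S^{N-1}_{\mathbb C,+})\to C(S)$, sending the standard coordinates to $x_i$, which dually gives the embedding $S\subset S^{N-1}_{\mathbb C,+}$.

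For assertion (2), I would simply observe that $\Phi$ is nothing but the restriction of $\Delta$ to $C(S)\subset C(G)$. Indeed, $\Delta(u_{i1})=\sum_ju_{ij}\otimes u_{j1}=\sum_ju_{ij}\otimes x_j$, so $\Delta$ maps generators of $C(S)$ into $C(G)\otimes C(S)$; since the $x_i$ generate $C(S)$ as a $*$-algebra and $\Delta$ is a $*$-homomorphism, the image of the whole of $C(S)$ lies in $C(G)\otimes C(S)$. Well-definedness is automatic, being a restriction of the comultiplication, and this gives the affine action $G\curvearrowright S$ in the sense of Definition 1.7.

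For the last assertion, the surjection $C(S^{N-1}_{\mathbb C,+})\to C(S)$ from (1) descends, in the classical case $G=U_N$, to a surjection $C(S^{N-1}_\mathbb C)\to C(S)$, and similarly for $G=O_N$ to $C(S^{N-1}_\mathbb R)\to C(S)$, by commutativity (and self-adjointness in the real case). Here the main point, which I expect to be the only nontrivial step, is the injectivity of these surjections. I would argue this via the classical transitive actions $U_N\curvearrowright S^{N-1}_\mathbb C$ and $O_N\curvearrowright S^{N-1}_\mathbb R$: every point of the sphere arises as the first column of some unitary (resp.\ orthogonal) matrix, so the evaluation map $u\mapsto(u_{i1})_i$ is surjective onto the sphere, which means that on the dual side the coordinates $x_i=u_{i1}$ satisfy no relations beyond those defining $S^{N-1}_\mathbb C$ (resp.\ $S^{N-1}_\mathbb R$), and the claimed identifications follow.
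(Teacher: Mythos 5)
Your proposal is correct and follows essentially the same route as the paper: assertion (1) via the $(1,1)$-entries of the biunitarity relations, and assertion (2) by observing that the coaction is the restriction of $\Delta$. For the final identifications $S=S^{N-1}_{\mathbb R},S^{N-1}_{\mathbb C}$, the paper merely states this is ``well-known and elementary''; your transitivity argument is a correct way to supply the missing detail.
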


\begin{proof}
The fact that we have an embedding $S\subset S^{N-1}_{\mathbb C,+}$ comes from the biunitarity condition satisfied by the fundamental corepresentation $u=(u_{ij})$, as follows:
\begin{eqnarray*}
\sum_ix_i^*x_i&=&\sum_iu_{i1}^*u_{i1}=(u^*u)_{11}=1\\
\sum_ix_ix_i^*&=&\sum_iu_{i1}u_{i1}^*=(u^t\bar{u})_{11}=1
\end{eqnarray*}

Regarding now the action $G\curvearrowright S$, this simply appears as the restriction of the standard action $G\curvearrowright G$. To be more precise, the coaction $\Phi:C(S)\to C(G)\otimes C(S)$, which must map $x_i\to\sum_ku_{ik}\otimes x_k$, appears as restriction of the comultiplication $\Delta:C(G)\to C(G)\otimes C(G)$, which maps $u_{ij}\to\sum_ku_{ik}\otimes u_{kj}$, and so $u_{i1}\to\sum_ku_{ik}\otimes u_{k1}$.

Finally, the last assertion is well-known, and elementary.
\end{proof}

In the free case now, as explained in \cite{ba1}, \cite{bgo}, the spheres $S^{N-1}_{\mathbb R,+}$ and $S^{N-1}_{\mathbb C,+}$ do not exactly appear as first column spaces, and this due to some analytic issues, coming from the well-known ``full vs. reduced version'' problem with Gelfand duality.

We can, however, formulate a positive result, as follows:

\begin{proposition}
Let $(S,T,K,G)$ be a noncommutative geometry.
\begin{enumerate}
\item We have an inclusion $S_{min}\subset S$, between subspaces of $S^{N-1}_{\mathbb C,+}$, where $G\to S_{min}$ is the first column space of $G$.

\item The composition $\int_S:C(S)\to C(S_{min})\subset C(G)\to\mathbb C$, with the map on the right being the Haar integration over $G$, is $G$-equivariant.

\item The inclusion $S_{min}\subset S$ becomes an equality, when identifying full and reduced algebras, with respect to the null ideals of the corresponding $\int_S$ forms.
\end{enumerate}
\end{proposition}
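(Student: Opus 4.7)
The plan is to prove the three parts in sequence, with the key structural inputs being the axiom $G=G^+(S)$, which provides the affine coaction $\Phi:C(S)\to C(G)\otimes C(S)$ with $\Phi(x_i)=\sum_j u_{ij}\otimes x_j$, and the axiom $S^{N-1}_\mathbb R\subset S$, which supplies a distinguished classical point to anchor the construction.

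For (1), I would produce the quotient map $\pi:C(S)\to C(S_{min})$ as a composition. The surjection $C(S)\to C(S^{N-1}_\mathbb R)$ coming from the inclusion $S^{N-1}_\mathbb R\subset S$, combined with evaluation at $(1,0,\ldots,0)\in S^{N-1}_\mathbb R$, yields a character $\chi:C(S)\to\mathbb C$ with $\chi(x_j)=\delta_{j1}$. Then $\pi:=(id\otimes\chi)\circ\Phi$ is a $*$-algebra morphism sending $x_i\mapsto\sum_j u_{ij}\chi(x_j)=u_{i1}$, whose image is by construction the $C^*$-subalgebra $C(S_{min})\subset C(G)$ generated by the first column. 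This gives the desired surjection $C(S)\twoheadrightarrow C(S_{min})$, i.e. $S_{min}\subset S$ as subspaces of $S^{N-1}_{\mathbb C,+}$.

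For (2), equivariance toward the trivial $G$-module $\mathbb C$ amounts to the invariance formula $(id\otimes\int_S)\Phi(f)=\int_S(f)\cdot 1_{C(G)}$. I would first verify on generators that $(id\otimes\pi)\circ\Phi=\Delta\circ\pi$, since both sides send $x_i\mapsto\sum_j u_{ij}\otimes u_{j1}$, and extend by the algebra morphism property. The desired invariance then unfolds as
$$(id\otimes\int_S)\Phi=(id\otimes\int_G)(id\otimes\pi)\Phi=(id\otimes\int_G)\Delta\pi=\int_G(\pi(\cdot))\cdot 1=\int_S(\cdot)\cdot 1,$$
using only the left invariance of the Haar state $\int_G$ on $C(G)$.

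For (3), the crucial observation is that $\ker\pi$ is contained in the null ideal $N_S=\{f\in C(S):\int_S(f^*f)=0\}$: indeed if $\pi(f)=0$ then $\int_S(f^*f)=\int_G(\pi(f)^*\pi(f))=0$. The same identity, read in reverse, shows that $\pi$ descends to a $*$-isomorphism between the GNS-type quotients $C(S)/N_S$ and $C(S_{min})/N_{min}$, since an element $f\in C(S)$ satisfies $\pi(f)\in N_{min}$ precisely when $f\in N_S$. Passing to norm completions then yields the stated equality $S_{min}=S$ at the reduced level. The main obstacle --- and the reason the statement is only valid modulo this identification --- is the familiar full-versus-reduced phenomenon: the surjection $\pi$ need not be injective at the level of the full $C^*$-algebras, and it is precisely this failure that the null-ideal convention is designed to absorb.
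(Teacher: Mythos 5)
Your proof is correct and follows essentially the same route as the paper's: the map $\pi=(id\otimes ev_1)\circ\Phi$ for part (1), the intertwining relation $(id\otimes\pi)\Phi=\Delta\pi$ combined with Haar invariance for part (2), and the null-ideal/GNS identification for part (3). You supply more detail than the paper's terse proof (which merely asserts "by functoriality" and "this is trivial"), but the structure and key inputs are identical.
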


\begin{proof}
These results are all elementary, as follows:

(1) Given a noncommutative geometry $(S,T,K,G)$ we have $S^{N-1}_\mathbb R\subset S$, and so an evaluation morphism $ev_1:C(S)\to\mathbb C$, given by $x_i\to\delta_{i1}$. We also have the canonical action $G\curvearrowright S$. We therefore obtain a factorization, as claimed:
$$\xymatrix@R=12mm@C=18mm{
C(S)\ar@.[r]\ar[d]_\Phi&C(S_{min})\ar[d]^i\\
C(G)\otimes C(S)\ar[r]^{id\otimes ev_1}&C(G)
}
\qquad\xymatrix@R=5mm@C=10mm{\\:}\qquad
\xymatrix@R=12mm@C=18mm{
x_i\ar@.[r]\ar[d]_\Phi&X_i\ar[d]^i\\
\sum_ju_{ij}\otimes x_j\ar[r]^{id\otimes ev_1}&u_{i1}
}$$

(2) By functoriality, the $G$-equivariance of $\int_S$ only needs to be verified for $S=S_{min}$. But here $\int_S$ simply appears as the restriction of $\int_G$, and the result follows.

(3) This is something trivial, corresponding to the fact that the canoncial surjection $C(S)\to C(S_{min})$ becomes an isomorphism, when performing the GNS construction with respect to the corresponding $\int_S$ forms on these two algebras.
\end{proof}

Summarizing, we have an answer to our question, stating that $G\to S$ appears, when identifying ``full and reduced algebras'', simply by taking the first column space.

This answer is of course something quite theoretical, and in practice, we would need some finer results on the subject. One way of dealing with this problem is as follows:

\begin{definition}
A noncommutative geometry $(S,T,K,G)$ is called ergodic when
$$\xymatrix@R=12mm@C=20mm{
C(S)\ar[r]^\Phi\ar[d]_\alpha&C(G)\otimes C(S)\ar[d]^{\int_G\otimes id}\\
C(G)\ar[r]^{\int_G(.)1}&C(S)
}$$
commutes, where $\alpha:C(S)\to C(S_{min})\subset C(G)$ is the canonical map.
\end{definition}

As a basic example, assuming that we are in the case $S=S_{min}$, the ergodicity condition is automatic. Indeed, in this case $\alpha:C(S)\subset C(G)$ is the canonical inclusion, and the ergodicity condition follows from the invariance axiom for the Haar measure, namely:
$$\left(\int_G\otimes id\right)\Delta=\int_G(.)1$$

In general, there is no reason for an arbitrary noncommutative geometry $(S,T,K,G)$, as axiomatized in Definition 1.7 above, to be automatically ergodic. We have:

\begin{proposition}
The ergodicity condition is equivalent to the following condition,
$$\sum_{j_1\ldots j_p}\int_Gu_{i_1j_1}^{e_1}\ldots u_{i_pj_p}^{e_p}\cdot x_{j_1}^{e_1}\ldots x_{j_p}^{e_p}=\int_Gu_{i_11}^{e_1}\ldots u_{i_p1}^{e_p}\cdot1$$
which must be valid for any indices $i_1,\ldots,i_p$, and exponents $e_1,\ldots,e_p$.
\end{proposition}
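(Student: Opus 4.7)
The plan is a direct evaluation of the two composites in the square of Definition 4.3 on a generic monomial in the coordinates $x_1,\ldots,x_N$, at which point the equality of outputs becomes literally the displayed formula.

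First I would reduce to monomials. All four maps in the square are continuous and linear, so the diagram commutes on all of $C(S)$ if and only if it commutes on a dense linear subspace. Since the $*$-algebra generated by $x_1,\ldots,x_N$ is dense in $C(S)$, it suffices to verify the equality on all noncommutative $*$-monomials $x_{i_1}^{e_1}\cdots x_{i_p}^{e_p}$, with $e_k\in\{1,*\}$.

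Next I would compute the upper path. The coaction $\Phi$ is a $*$-homomorphism, with $\Phi(x_i)=\sum_j u_{ij}\otimes x_j$ and hence $\Phi(x_i^*)=\sum_j u_{ij}^*\otimes x_j^*$, so that by multiplicativity
$$\Phi(x_{i_1}^{e_1}\cdots x_{i_p}^{e_p})=\sum_{j_1,\ldots,j_p}u_{i_1j_1}^{e_1}\cdots u_{i_pj_p}^{e_p}\otimes x_{j_1}^{e_1}\cdots x_{j_p}^{e_p}.$$
Applying $\int_G\otimes\mathrm{id}$ then yields the left-hand side of the claimed identity. For the lower path, $\alpha$ is the $*$-homomorphism determined by $\alpha(x_i)=u_{i1}$, giving $\alpha(x_{i_1}^{e_1}\cdots x_{i_p}^{e_p})=u_{i_11}^{e_1}\cdots u_{i_p1}^{e_p}$, after which the map $\int_G(\cdot)\cdot 1$ produces $\int_G u_{i_11}^{e_1}\cdots u_{i_p1}^{e_p}\cdot 1$, the right-hand side.

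Thus commutativity of the square on the monomial $x_{i_1}^{e_1}\cdots x_{i_p}^{e_p}$ is precisely the stated formula for the indices $(i_1,\ldots,i_p)$ and exponents $(e_1,\ldots,e_p)$, and the equivalence follows. There is no real obstacle here: the only point that deserves care is the density/continuity reduction to $*$-monomials, which is standard since $x_i,x_i^*$ generate $C(S)$ as a $C^*$-algebra, and the rest is just reading off both legs of the square using the explicit definitions of $\Phi$, $\alpha$, and the Haar functional $\int_G$.
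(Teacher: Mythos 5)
Your proof is correct and follows essentially the same line as the paper: evaluate both legs of the commuting square of Definition 4.3 on $*$-monomials $x_{i_1}^{e_1}\cdots x_{i_p}^{e_p}$, using multiplicativity of $\Phi$ and $\alpha$, and read off the displayed identity. The only cosmetic difference is that the paper first rewrites $\alpha=(id\otimes ev_1)\Phi$ and packages the upper leg as $I=(\int_G\otimes id)\Phi$ before computing, whereas you chase both paths directly; the density remark you include is implicit in the paper.
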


\begin{proof}
We recall from the proof of Proposition 4.2 above that the canonical map $\alpha:C(S)\to C(S_{min})\subset C(G)$ has the following alternative description:
$$\alpha=(id\otimes ev_1)\Phi$$

By using this formula we obtain, by composing with the integration over $G$:
$$\int_G\alpha=\int_G(id\otimes ev_1)\Phi=ev_1\left(\int_G\otimes id\right)\Phi$$

Thus, with $I=(\int_G\otimes id)\Phi$, the ergodicity condition from Definition 4.3 reads:
$$I=ev_1I(.)1$$

On the other hand, the values of $I$ are given by:
\begin{eqnarray*}
I(x_{i_1}^{e_1}\ldots x_{i_p}^{e_p})
&=&\left(\int_G\otimes id\right)\left(\sum_{j_1\ldots j_p}u_{i_1j_1}^{e_1}\ldots u_{i_pj_p}^{e_p}\otimes x_{j_1}^{e_1}\ldots x_{j_p}^{e_p}\right)\\
&=&\sum_{j_1\ldots j_p}\int_Gu_{i_1j_1}^{e_1}\ldots u_{i_pj_p}^{e_p}\cdot x_{j_1}^{e_1}\ldots x_{j_p}^{e_p}
\end{eqnarray*}

Thus we obtain the condition in the statement, and we are done.
\end{proof}

We can now complement Proposition 4.2 with a finer result, as follows:

\begin{theorem}
Let $(S,T,K,G)$ be a noncommutative geometry, assumed to be ergodic, in the above sense. We have then inclusions as follows,
$$S_{min}\subset S\subset S_{max}$$
where $G\to S_{min}$ is the first column space of $G$, and where $S_{max}$ is the space obtained from $S^{N-1}_{\mathbb C,+}$ by imposing the conditions found in Proposition 4.4 above.
\end{theorem}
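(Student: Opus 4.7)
The plan is to handle the two inclusions $S_{\min}\subset S$ and $S\subset S_{\max}$ separately, since they come from quite different ingredients. The first inclusion is essentially already established in the proof of Proposition~4.2(1): there we built a surjective $*$-homomorphism $\alpha=(id\otimes ev_1)\Phi:C(S)\to C(S_{\min})\subset C(G)$ sending $x_i\mapsto u_{i1}$, and this is precisely the dual picture of an inclusion $S_{\min}\subset S$ inside $S^{N-1}_{\mathbb C,+}$. So for this half no new argument is needed, beyond recording that $\alpha$ maps generators to generators and is therefore onto $C(S_{\min})$.

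For the second inclusion $S\subset S_{\max}$, my first step is to spell out $S_{\max}$ explicitly. One fixes, for each choice of indices $i_1,\dots,i_p$ and exponents $e_1,\dots,e_p\in\{1,*\}$, the honest complex scalars
\[
\lambda^{e_1\ldots e_p}_{i_1\ldots i_p;j_1\ldots j_p}=\int_Gu_{i_1j_1}^{e_1}\ldots u_{i_pj_p}^{e_p},\qquad \mu^{e_1\ldots e_p}_{i_1\ldots i_p}=\int_Gu_{i_11}^{e_1}\ldots u_{i_p1}^{e_p},
\]
and sets $C(S_{\max})=C(S^{N-1}_{\mathbb C,+})/J$, where $J$ is the closed two-sided $*$-ideal generated by the elements
\[
\sum_{j_1\ldots j_p}\lambda^{e_1\ldots e_p}_{i_1\ldots i_p;j_1\ldots j_p}\,x_{j_1}^{e_1}\ldots x_{j_p}^{e_p}-\mu^{e_1\ldots e_p}_{i_1\ldots i_p}\cdot 1.
\]
Since the coefficients are numbers, the quotient is an honest $C^*$-algebra quotient, and $S_{\max}\subset S^{N-1}_{\mathbb C,+}$ is a well-defined closed subspace.

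Next, to produce the surjection $C(S_{\max})\to C(S)$, it suffices to check that each of the above generators of $J$ vanishes inside $C(S)$. But this is exactly the content of Proposition~4.4: the defining identities of $S_{\max}$ are word-for-word the identities that ergodicity of $(S,T,K,G)$, as formulated in Definition~4.3, forces in $C(S)$. Hence the canonical surjection $C(S^{N-1}_{\mathbb C,+})\to C(S)$ factors through $C(S_{\max})$, yielding $S\subset S_{\max}$.

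The step I expect to require the most care is not any of the displayed computations, but rather the bookkeeping around Proposition~4.4: namely, the need to confirm that the commutative diagram in Definition~4.3, evaluated on an arbitrary monomial $x_{i_1}^{e_1}\ldots x_{i_p}^{e_p}$, does indeed produce \emph{all} of the generators of $J$ and not merely a proper subfamily of them (for instance, after an implicit restriction coming from $ev_1$). Once this is verified, both halves of the theorem are formal consequences of what has already been done, and the stated inclusions $S_{\min}\subset S\subset S_{\max}$ follow.
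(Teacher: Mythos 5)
Your proof is correct and is exactly the argument the paper has in mind; the paper itself reduces the proof to the single sentence ``This is indeed clear from Definition 4.3 and Proposition 4.4,'' and your write-up simply spells out the two factorizations (through $C(S_{\min})$ via $\alpha$, and through $C(S_{\max})$ via the ergodicity relations) that this remark is gesturing at.
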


\begin{proof}
This is indeed clear from Definition 4.3 and Proposition 4.4.
\end{proof}

Summarizing, in order to have a good correspondence $G\to S$, we must impose the somewhat technical ergodicity condition found above. We will further discuss these topics in sections 5-6 below, with a refinement of the ergodicity condition, in the easy case, and with a proof as well for the fact that the 9 main geometries are indeed ergodic.

\section{Fizzy geometries}

Our purpose here is to fine-tune the easy geometry formalism. We would like to extract from what we have a notion of ``fully easy'' (or ``fizzy'') geometry, where everything is axiomatized directly in terms of the underlying categories of partitions.

For an easy geometry $(S,T,K,G)$, coming from categories of partitions and pairings $(E,D)$, the diagram of the known correspondences is as follows:
$$\xymatrix@R=20pt@C=40pt{
K\ar[rr]\ar[dd]\ar[dr]&&G\ar[ll]\ar[dl]\\
&E\leftrightarrow D\ar[ul]\ar[ur]\ar[dl]\\
T&&S\ar[ll]\ar[ul]\ar[uu]
}$$

Our first task will be that of working out the formula of $T$, in terms of $(E,D)$. Then, under suitable conditions coming from section 4 above, we will do the same for $S$.

Regarding the torus $T$, we first have the following result, from \cite{bb2}:

\begin{proposition}
Given an easy quantum group $H_N\subset G\subset U_N^+$, coming from a category of partitions ${\mathcal NC}_2\subset D\subset P_{even}$, the associated diagonal torus is given by:
$$C(T)=C^*(F_N)\Big/\left<g_{i_1}\ldots g_{i_k}=g_{j_1}\ldots g_{j_l}\Big|\forall i,j,k,l,\exists\pi\in D(k,l),\delta_\pi\begin{pmatrix}i\\ j\end{pmatrix}\neq0\right>$$
Moreover, we can use if we want only partitions $\pi$ generating $D$.
\end{proposition}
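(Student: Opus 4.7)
The plan is as follows. Start from the presentation of $C(G)$ given in Proposition 2.6, namely $C(G)=C(U_N^+)/I_D$, where $I_D$ is the ideal encoding the relations $T_\pi\in\mathrm{Hom}(u^{\otimes k},u^{\otimes l})$ for $\pi\in D(k,l)$. Passing to the diagonal torus amounts to further quotienting by the $u_{ij}$ with $i\neq j$, and renaming $g_i=u_{ii}$. The biunitarity of $u$ descends to $g_i^*g_i=g_ig_i^*=1$, so the $g_i$'s are unitaries, and they are moreover group-like (because $\Delta(u_{ii})=\sum_ku_{ik}\otimes u_{ki}$ collapses to $g_i\otimes g_i$). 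Before imposing any relation coming from $D$, no commutation between distinct $g_i$'s is forced, so $C(T)$ is a canonical quotient of $C^*(F_N)$, and the entire question reduces to determining which additional word-relations among the $g_i$'s are forced by the $T_\pi$-intertwiners.

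The core computation is then the following. Writing out $(T_\pi\otimes\mathrm{id})u^{\otimes k}=u^{\otimes l}T_\pi$ on a basis vector $e_{i_1}\otimes\ldots\otimes e_{i_k}$ and extracting the coefficient of $e_{j_1}\otimes\ldots\otimes e_{j_l}$ yields, in $C(G)$, an identity of the shape
$$\sum_{a_1,\ldots,a_k}\delta_\pi\begin{pmatrix}a_1&\ldots&a_k\\ j_1&\ldots&j_l\end{pmatrix}u_{a_1i_1}^{e_1}\ldots u_{a_ki_k}^{e_k}=\sum_{b_1,\ldots,b_l}\delta_\pi\begin{pmatrix}i_1&\ldots&i_k\\ b_1&\ldots&b_l\end{pmatrix}u_{j_1b_1}^{f_1}\ldots u_{j_lb_l}^{f_l}$$
where the $e_p,f_p\in\{1,*\}$ are prescribed by the colorings of $k$ and $l$. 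Applying the diagonal quotient $u_{pq}\mapsto\delta_{pq}g_p$, the left-hand side collapses (only $a=i$ survives) to $\delta_\pi\binom{i}{j}g_{i_1}^{e_1}\ldots g_{i_k}^{e_k}$, and the right-hand side collapses (only $b=j$ survives) to $\delta_\pi\binom{i}{j}g_{j_1}^{f_1}\ldots g_{j_l}^{f_l}$. Thus $T_\pi$ contributes exactly the word equations $g_{i_1}\ldots g_{i_k}=g_{j_1}\ldots g_{j_l}$ (read with the appropriate color conventions) whenever $\delta_\pi\binom{i}{j}\neq0$, which is the presentation claimed in the statement.

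Universality is then a routine verification. In one direction, the quotient map $C^*(F_N)\to C(T)$ kills all the displayed relations by construction. In the other, one defines a $*$-morphism from $C(U_N^+)$ to the universal algebra $A$ presented by the displayed relations, sending $u_{ij}\mapsto\delta_{ij}g_i$; this is well-defined (the matrix $\mathrm{diag}(g_i)$ is biunitary because each $g_i$ is unitary), and the computation above, read in reverse, shows it factors through $C(G)$, hence through $C(T)$. Finally, for the remark about generators, recall that each category operation $\otimes,\circ,*$ on partitions translates at the level of word equations on the $g_i$'s into concatenation of index strings, chaining through a common middle index (with summation that collapses, on the diagonal, to a single surviving term), and reversal with color-flip $g\leftrightarrow g^*$; all of these are algebraic consequences inside the group $*$-algebra, so imposing the relations for a generating subset of $D$ suffices.

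The main obstacle is the bookkeeping in the core computation: one must track carefully how $\delta_\pi$ enforces index-matching across blocks of $\pi$, and verify that the diagonal substitution $u_{pq}\mapsto\delta_{pq}g_p$ indeed kills every cross-term and leaves only the pure word equation, with the color exponents $e_p,f_p$ aligning correctly with the $\circ,\bullet$-coloring of the legs of $\pi$. Once this step is cleanly executed, universality and the generating remark are essentially formal.
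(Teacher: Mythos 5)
Your proof is correct and takes essentially the same route as the paper: start from the Tannakian presentation of $C(G)$ in Proposition 2.6, pass to the diagonal quotient $u_{ij}\mapsto\delta_{ij}g_i$, and read off the $T_\pi$-intertwiner relations as the word equations $g_{i_1}\ldots g_{i_k}=g_{j_1}\ldots g_{j_l}$ on the surviving nonzero $\delta_\pi$ coefficients. The paper disposes of the generating-set remark by invoking Proposition 2.6 for a generating subset $S\subset D$ from the outset, while you re-derive it by tracking how $\otimes,\circ,*$ act on word equations; both arguments are sound, and your universality check is somewhat more explicit than the paper's terse treatment.
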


\begin{proof}
We prove directly the second assertion. Assume that $S\subset D$ is a generating subset. According to Proposition 2.6 above, we have the following formula:
$$C(G)=C(U_N^+)\Big/\left<T_\pi\in Hom(u^{\otimes k},u^{\otimes l})\Big|\forall k,l,\forall\pi\in S(k,l)\right>$$

In order to compute the diagonal torus algebra $C(T)$, we must further divide by the relations $u_{ij}=0$ for $i\neq j$. But this gives the following formula, where $g=diag(g_1,\ldots,g_N)$ is the diagonal matrix formed by the standard generators of $F_N$:
$$C(T)=C^*(F_N)\Big/\left<T_\pi\in Hom(g^{\otimes k},g^{\otimes l})\Big|\forall k,l,\forall\pi\in S(k,l)\right>$$

In order to interpret now the above relations, observe that we have:
\begin{eqnarray*}
T_\pi g^{\otimes k}(e_{i_1}\otimes\ldots\otimes e_{i_k})&=&\sum_{j_1\ldots j_l}\delta_\pi\begin{pmatrix}i_1&\ldots&i_k\\ j_1&\ldots&j_l\end{pmatrix}e_{j_1}\otimes\ldots\otimes e_{j_l}\cdot g_{i_1}\ldots g_{i_k}\\
g^{\otimes l}T_\pi(e_{i_1}\otimes\ldots\otimes e_{i_k})&=&\sum_{j_1\ldots j_l}\delta_\pi\begin{pmatrix}i_1&\ldots&i_k\\ j_1&\ldots&j_l\end{pmatrix}e_{j_1}\otimes\ldots\otimes e_{j_l}\cdot g_{j_1}\ldots g_{j_l}
\end{eqnarray*}

Thus, we are led to the formula in the statement. See \cite{bb2}.
\end{proof}

In the noncommutative geometric setting now, we have:

\begin{proposition}
Given categories ${\mathcal NC}_2\subset D\subset P_2$ and ${\mathcal NC}_{even}\subset E\subset P_{even}$ satisfying $D=E\cap P_2$, $E=<D,{\mathcal NC}_{even}>$, the following formulae define the same algebra:
\begin{eqnarray*}
C(T)&=&C^*(F_N)\Big/\left<g_{i_1}\ldots g_{i_k}=g_{j_1}\ldots g_{j_l}\Big|\forall i,j,k,l,\exists\pi\in D(k,l),\delta_\pi\begin{pmatrix}i\\ j\end{pmatrix}\neq0\right>\\
C(T)&=&C^*(F_N)\Big/\left<g_{i_1}\ldots g_{i_k}=g_{j_1}\ldots g_{j_l}\Big|\forall i,j,k,l,\exists\pi\in E(k,l),\delta_\pi\begin{pmatrix}i\\ j\end{pmatrix}\neq0\right>
\end{eqnarray*}
Moreover, the space $T$ defined by these formulae is the common diagonal torus of the easy quantum groups $O_N\subset G\subset U_N^+$ and $H_N\subset K\subset K_N^+$ associated to $D,E$.
\end{proposition}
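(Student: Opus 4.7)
The plan is to recognize both formulae as instances of Proposition 5.1 and then to identify the resulting algebras. Indeed, the first formula is, by Proposition 5.1 applied to $D$, the diagonal torus of the easy quantum group $G \supset O_N$ associated to $D$, while the second formula, applied to the category $E$ satisfying $\mathcal{NC}_{even} \subset E \subset P_{even}$, computes the diagonal torus of the easy quantum group $K \supset H_N$ associated to $E$. Thus, once we prove that the two formulae yield the same algebra, the ``moreover'' statement identifying this common algebra as the diagonal torus of both $G$ and $K$ follows immediately.

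Denote the two candidate algebras by $C(T_D)$ and $C(T_E)$. Since $E = \langle D, \mathcal{NC}_{even}\rangle \supset D$, every defining relation of $C(T_D)$ appears among the defining relations of $C(T_E)$, so there is a canonical surjection $C(T_D) \twoheadrightarrow C(T_E)$. The content of the proposition is the converse: every relation imposed by $E$ must already hold in $C(T_D)$. Using the generation hypothesis, any $\pi \in E$ is obtained from partitions in $D \cup \mathcal{NC}_{even}$ by the category operations $\otimes, \circ, *$, so it suffices to show (a) that the relations associated to $\mathcal{NC}_{even}$ are trivial in $C^*(F_N)$, and (b) that the three category operations translate into algebraic consequences of the word relations they act on.

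For step (a), I would simply quote Proposition 1.5: the diagonal torus of $K_N^+$ is $\widehat{F_N}$, and applying Proposition 5.1 to $K_N^+$, whose associated category is precisely $\mathcal{NC}_{even}$, shows that the entire set of relations coming from $\mathcal{NC}_{even}$ collapses to identities in $F_N$. Informally, noncrossing together with matching ensures that when indices fit $\pi \in \mathcal{NC}_{even}$, the associated word $g_{i_1}^{e_1}\cdots g_{i_k}^{e_k}$ has paired $\circ/\bullet$ exponents arranged in a nesting pattern that freely cancels inside $F_N$; but routing through Proposition 1.5 and Proposition 5.1 avoids any direct combinatorics.

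For step (b), the verification is routine and parallels the derivation in Proposition 5.1: horizontal concatenation $\otimes$ corresponds to multiplying two word equations, vertical composition $\circ$ to substituting one into another, and $*$ to swapping the two sides of a word equation. Each such operation preserves the property of being a formal consequence of relations in $D$ modulo trivial $F_N$-identities. Combining (a) and (b) yields $C(T_D) = C(T_E)$, hence the ``moreover'' clause. The main potential obstacle is really in (b), namely keeping the bookkeeping between category moves on $\pi$ and algebraic moves on the associated word relations fully rigorous; but since the diagonal restriction $u_{ij} = \delta_{ij} g_i$ turns each $T_\pi \in \mathrm{Hom}(g^{\otimes k},g^{\otimes l})$ condition directly into the word equation written in the statement, the verification reduces to exactly the same calculation used in Proposition 5.1.
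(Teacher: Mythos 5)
Your proof is correct and takes essentially the same route as the paper: the paper's own proof is the one-liner that all assertions follow by applying Proposition 5.1 to $G$ and to $K$, using the hypotheses on $D,E$, and your argument fills in exactly that. One remark: your step (b) is redundant, because Proposition 5.1 already asserts that the torus presentation can be written using only a generating set of the category; applying this directly to $K$ with the generating set $D\cup\mathcal{NC}_{even}$, and then invoking your step (a) to discard the trivial $\mathcal{NC}_{even}$-relations, gives $C(T_E)=C(T_D)$ immediately, without re-deriving that the category operations produce algebraically consequent word relations.
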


\begin{proof}
All the assertions follow by applying Proposition 5.1, first for the quantum group $G$, and then for the quantum group $K$, by using our assumptions on $D,E$.
\end{proof}

Observe that the conditions in Proposition 5.2 are satisfied for an easy geometry $(S,T,K,G)$, coming from categories $(D,E)$. Thus, we have our needed formula for $T$ in terms of both $D,E$, and we can go ahead with a similar study for the sphere $S$.

Regarding the spheres, the situation here is more complicated. Let us go back to the ergodicity considerations from section 4 above. With standard Weingarten integration theory conventions, as in \cite{bsp}, we have the following result:

\begin{proposition}
An easy geometry $(S,T,K,G)$, coming from a category of pairings ${\mathcal NC}_2\subset D\subset P_2$, is ergodic precisely when we have
$$\sum_{\pi,\sigma\in D}\delta_\pi(i_1,\ldots,i_p)W(\pi,\sigma)\sum_{j_1\ldots j_p}\delta_\sigma(j_1\ldots j_p)x_{j_1}^{e_1}\ldots x_{j_p}^{e_p}=\sum_{\pi,\sigma\in D}\delta_\pi(i_1,\ldots,i_p)W(\pi,\sigma)$$
for any $i_1,\ldots,i_p$ and any $e_1,\ldots,e_p$, where $W$ is the Weingarten function of $G$.
\end{proposition}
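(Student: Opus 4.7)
The plan is to take the ergodicity criterion already isolated in Proposition 4.4, and expand both sides via the Weingarten integration formula for the easy quantum group $G$. The only substantive input beyond routine bookkeeping will be the observation that the evaluation of a Kronecker symbol at a constant index string is identically $1$, which is what collapses the right-hand side into the form displayed in the statement.

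First I would recall that for an easy quantum group $H_N \subset G \subset U_N^+$ associated to a category of pairings $\mathcal{NC}_2 \subset D \subset P_2$, the colored unitary Weingarten formula of \cite{bsp} reads
$$\int_G u_{i_1 j_1}^{e_1}\ldots u_{i_p j_p}^{e_p} = \sum_{\pi,\sigma \in D}\delta_\pi(i_1,\ldots,i_p)\,\delta_\sigma(j_1,\ldots,j_p)\,W(\pi,\sigma),$$
with the tacit convention that pairings incompatible with the coloring pattern $(e_1,\ldots,e_p)$ contribute zero. Feeding this into the left-hand side of Proposition 4.4, namely $\sum_{j_1\ldots j_p}\int_G u_{i_1 j_1}^{e_1}\ldots u_{i_p j_p}^{e_p}\cdot x_{j_1}^{e_1}\ldots x_{j_p}^{e_p}$, and pulling the coefficients $\delta_\pi(i_1,\ldots,i_p)W(\pi,\sigma)$ out of the $j$-summation, I obtain exactly the left-hand side of the displayed identity in the statement.

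Next I would expand the right-hand side of Proposition 4.4, namely $\int_G u_{i_1 1}^{e_1}\ldots u_{i_p 1}^{e_p}$, using the same Weingarten formula. Here the crucial observation is that $\delta_\sigma(1,1,\ldots,1) = 1$ for every pairing $\sigma \in D$, since the generalized Kronecker symbol demands that indices be constant along blocks of $\sigma$, a condition which is automatic when every index equals $1$. Thus the Weingarten expansion collapses to $\sum_{\pi,\sigma \in D}\delta_\pi(i_1,\ldots,i_p)W(\pi,\sigma)$, which is exactly the right-hand side of the displayed identity. Comparing the two expansions, ergodicity as formulated in Proposition 4.4 is equivalent, index by index and exponent by exponent, to the identity in the statement. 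The main (and essentially only) point requiring care is to keep track of the coloring conventions when invoking the colored Weingarten formula; since this is purely bookkeeping and does not interfere with the constant-index evaluation $\delta_\sigma(1,\ldots,1) = 1$, I do not expect any genuine obstacle.
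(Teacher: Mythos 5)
Your proposal is correct and follows the same route as the paper: plug the Weingarten formula into both sides of the ergodicity criterion from Proposition 4.4, with the right-hand side collapsing because $\delta_\sigma(1,\ldots,1)=1$ for any partition $\sigma$. The paper leaves this last collapse implicit; you spell it out, but the argument is the same.
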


\begin{proof}
We know from Proposition 4.4 above that the ergodicity condition reads:
$$\sum_{j_1\ldots j_p}\int_Gu_{i_1j_1}^{e_1}\ldots u_{i_pj_p}^{e_p}\cdot x_{j_1}^{e_1}\ldots x_{j_p}^{e_p}=\int_Gu_{i_11}^{e_1}\ldots u_{i_p1}^{e_p}\cdot1$$

On the other hand, the Weingarten integration formula for $G$ is as follows:
$$\int_Gu_{i_1j_1}^{e_1}\ldots u_{i_pj_p}^{e_p}=\sum_{\pi,\sigma\in D}\delta_\pi(i_1,\ldots,i_p)\delta_\sigma(j_1\ldots j_p)W(\pi,\sigma)$$

Thus, we obtain the formula in the statement.
\end{proof}

Now observe that the ergodicity condition, in its above combinatorial formulation, trivially holds when the following condition is satisfied over $S$, for any choice of the indices $j_1,\ldots,j_p$, of the exponents $e_1,\ldots,e_p$, and of a pairing $\pi\in D$:
$$\sum_{j_1\ldots j_p}\delta_\sigma(j_1\ldots j_p)x_{j_1}^{e_1}\ldots x_{j_p}^{e_p}=1$$

Let us look now more in detail at this condition. By definition, this is a kind of ``strong ergodicity'' condition on the action $G\curvearrowright S$. The point now is that, as we will show below, this condition is satisfied for all the basic 9 examples of easy geometries. In addition, once again for these basic 9 examples, the above relations present in fact the spheres $S$.

Summarizing, our ergodicity study led us to exactly what we wanted, namely a concrete procedure for constructing a noncommutative sphere $S$, out of a category of pairings $D$. Thus, back to our axiomatization questions, we are led to the following notion:

\begin{definition}
An easy noncommutative geometry $(S,T,K,G)$, coming from a category of pairings ${\mathcal NC}_2\subset D\subset P_2$, is called fully easy (or fizzy) when we have
$$\sum_{j_1\ldots j_p}\delta_\sigma(j_1\ldots j_p)x_{j_1}^{e_1}\ldots x_{j_p}^{e_p}=1$$
for any $j_1,\ldots,j_p$, any $e_1,\ldots,e_p$, and any $\sigma\in D$, over the sphere $S$.
\end{definition}

Observe that, due to Proposition 5.3, the fizziness condition implies, trivially, that the ergodicity condition is satisfied. Observe also that in the case of a fizzy geometry we have $S_{min}\subset S\subset S_{med}$, where $S_{med}\subset S^{N-1}_{\mathbb C,+}$ is the subspace obtained by imposing the easiness condition. Thus, we have here an improvement of the result in Theorem 4.5.

As a main result now, we can axiomatize the fizzy geometries, directly in terms of the category of pairings $D$. With the standard notations $D(p)=D(0,p)$ for the categories of partitions, and $Fix(r)=Hom(r,1)$ for corepresentations, the result is as follows:

\begin{theorem}
For a fizzy geometry $(S,T,K,G)$, coming from a category of pairings ${\mathcal NC}_2\subset D\subset P_2$, we have, by identifying as usual free and reduced algebras:
\begin{eqnarray*}
C(S)&=&C(S^{N-1}_{\mathbb C,+})\Big/\left<\sum_{i_1\ldots i_p}\delta_\pi(i_1\ldots i_p)x_{i_1}^{e_1}\ldots x_{i_p}^{e_p}=1\Big|\forall p,\forall\pi\in D(p)\right>\\
C(T)&=&C^*(F_N)\Big/\left<g_{i_1}^{e_1}\ldots g_{i_p}^{e_p}=1\Big|\forall p,\forall i,\exists\pi\in D(p),\delta_\pi(i_1\ldots i_p)\neq0\right>\\
C(K)&=&C(K_N^+)\Big/\left<T_\pi\in Fix(u^{\otimes p})\Big|\forall p,\forall\pi\in D(p)\right>\\
C(G)&=&C(U_N^+)\Big/\left<T_\pi\in Fix(u^{\otimes p})\Big|\forall p,\forall\pi\in D(p)\right>
\end{eqnarray*}
Conversely, given two categories ${\mathcal NC}_2\subset D\subset P_2$ and ${\mathcal NC}_{even}\subset E\subset P_{even}$ satisfying $D=E\cap P_2$ and $E=<D,\mathcal{NC}_{even}>$, the objects $(S,T,K,G)$ defined above form a noncommutative geometry precisely when $G=G^+(S)$ and $K=G\cap G^+(T)$.
\end{theorem}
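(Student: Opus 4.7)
The plan is to split the proof into the forward direction (deriving the four explicit presentations from the fizzy structure) and the converse direction (analyzing when the formulas assemble into a geometry). The workhorse throughout is the rotation trick for easy quantum groups: since $D$ and $E$ are categories of partitions, any $\pi\in D(k,l)$ rotates to some $\tilde\pi\in D(k+l)=D(0,k+l)$, and Frobenius reciprocity yields $T_\pi\in Hom(u^{\otimes k},u^{\otimes l})\iff T_{\tilde\pi}\in Fix(u^{\otimes(k+l)})$. This lets me rewrite the $Hom$-type presentations of Proposition 2.6 and the torus relations of Proposition 5.1 in the cleaner $Fix$-type form demanded by the statement.

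For the forward direction I would proceed formula by formula. The $C(G)$ presentation is immediate from Proposition 2.6 combined with the rotation trick. The $C(K)$ presentation is the same argument applied with the category $E=<D,\mathcal{NC}_{even}>$, using that the $\mathcal{NC}_{even}$ generators correspond exactly to the magic relations already built into $C(K_N^+)$, so that only the relations coming from $D(p)$ need be added on top. The $C(T)$ presentation follows from Proposition 5.2 after the same rotation of upper to lower legs. The $C(S)$ presentation is fizziness itself (Definition 5.4): fizziness gives $S\subset S_{med}$ with $S_{med}$ defined by precisely those relations, and the opposite inclusion holds after identifying full and reduced algebras, as already observed in the paragraph following Definition 5.4.

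For the converse, I would start with $(D,E)$ satisfying the two compatibility conditions, define $(S,T,K,G)$ by the four formulas, and verify the axioms of Definition 1.7 one by one. The lower ambient inclusions $S^{N-1}_\mathbb R\subset S$, $\widehat{\mathbb Z_2^N}\subset T$, $H_N\subset K$, $O_N\subset G$ follow by direct evaluation: on $S^{N-1}_\mathbb R$ the pairing relation collapses via $\sum_i x_i^2=1$, and on $\widehat{\mathbb Z_2^N}$ via $g_i^2=1$. Axiom (1), namely $K=G\cap K_N^+$ and $G=<K,O_N>$, is exactly Proposition 2.7 under our hypotheses on $(D,E)$. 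The diagonal-torus half of axiom (2) is automatic from Propositions 5.1 and 5.2 applied to $G$ and to $K$. The standard-torus half amounts to computing $C(S)/<x_ix_i^*=x_i^*x_i=\frac{1}{N}>$ and matching it with $C(T)$; here I would substitute $g_i=\sqrt{N}\,x_i$ and use that $\delta_\pi$ forces indices within each pair to coincide, so that the fizziness relation collapses to exactly the rotated form of the Proposition 5.2 relations. Axiom (3) is then precisely the remaining condition $G=G^+(S)$, $K=G\cap G^+(T)$, which is not automatic but is the characterizing hypothesis of the converse.

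The main obstacle I anticipate is the bookkeeping in the standard-torus-equals-$T$ computation: the collapse from $\sum\delta_\pi x_{i_1}^{e_1}\cdots x_{i_p}^{e_p}=1$ to the Proposition 5.2 relations on the $g_i$'s must be carried out carefully using unitarity and the matching structure of the colorings, though no new idea beyond the rotation trick is involved. Everything else reduces to a direct application of earlier results or an elementary verification.
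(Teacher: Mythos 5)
Your proposal is correct and takes essentially the same route as the paper: the forward direction uses the rotation/Frobenius-reciprocity trick to pass from the $Hom$-type presentations of Propositions 2.6, 5.1, 5.2 to the $Fix$-type ones, and the converse checks the axioms of Definition 1.7 one by one, with the only nontrivial verification being that the standard torus of $S$ matches the common diagonal torus of $K,G$ via the rescaling $g_i=\sqrt{N}x_i$. You supply a bit more detail than the paper's terse argument (e.g.\ the explicit observation that the $\mathcal{NC}_{even}$ relations are already built into $C(K_N^+)$, and the ambient-inclusion checks), but the decomposition and the key ideas are identical.
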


\begin{proof}
The first assertion follows from the various results obtained above, by using the fact that the collection of sets $D_0=(D(p))$ generates the category $D=(D(k,l))$. 

Regarding the last assertion, let us go back to the axioms (1,2,3) in Definition 1.7:

(1) This axiom is satisfied, due to our combinatorial assumptions on $D,E$, namely $D=E\cap P_2$ and $E=<D,\mathcal{NC}_{even}>$, as explained in Proposition 2.8 above.

(2) We know from Proposition 5.2 that $T$ is the common diagonal torus of $K,G$, so we just have to check that $T$ coincides with the standard torus $\mathcal T$ of the sphere $S$. In order to compute this latter torus, observe that, by definition of $S$, we have:
$$C(\mathcal T)=C^*(F_N)\Big/\left<\sum_{i_1\ldots i_p}\delta_\pi(i_1\ldots i_p)x_{i_1}^{e_1}\ldots x_{i_p}^{e_p}=1\Big|\forall p,\forall\pi\in D(p)\right>$$

Now by rescaling, the corresponding group generators $g_i=\sqrt{N}x_i$ are subject to the following relations, which must be valid in the group algebra $C(\mathcal T)$:
$$\sum_{i_1\ldots i_p}\delta_\pi(i_1\ldots i_p)g_{i_1}^{e_1}\ldots g_{i_p}^{e_p}=p\quad,\quad\forall p,\forall\pi\in D(p)$$

But these are exactly the relations defining the dual $\Gamma=\widehat{T}$, and we are done.

(3) Here the axioms hold as well, because they are included in the statement.
\end{proof}

\section{Examples, revised}

We have seen in the previous section that the easy geometry formalism can be fine-tuned into a fully easy (or fizzy) geometry formalism, where everything is axiomatized as it should be, in terms of the associated categories of pairings and partitions $D,E$.

In this section we verify that the 9 main geometries are indeed fizzy. In order to check the fizziness axiom, from Definition 5.4 above, we use the following fact:

\begin{proposition}
For a permutation $\sigma\in S_p$, the following conditions, involving self-adjoint variables $x_1,\ldots,x_p$ satisfying $\sum_ix_ix_i^*=\sum_ix_i^*x_i=1$, are equivalent,
\begin{enumerate}
\item $x_{i_1}^{e_1}\ldots x_{i_p}^{e_p}=x_{i_{\sigma(1)}}^{f_1}\ldots x_{i_{\sigma(p)}}^{f_p}$,

\item $\sum_{i_1\ldots i_p}x_{i_1}^{e_1}\ldots x_{i_p}^{e_p}x_{i_{\sigma(p)}}^{\bar{f}_p}\ldots x_{i_{\sigma(1)}}^{\bar{f}_1}=1$,
\end{enumerate} 
for any choice of $e_1,\ldots,e_p$ and $f_1,\ldots,f_p$, where $e\to\bar{e}$ is the involution $1\leftrightarrow *$.
\end{proposition}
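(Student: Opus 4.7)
The plan is to introduce, for fixed indices $i_1,\ldots,i_p$ and exponent strings $(e_k),(f_k)$, the two words
$$u = x_{i_1}^{e_1}\cdots x_{i_p}^{e_p},\qquad v = x_{i_{\sigma(1)}}^{f_1}\cdots x_{i_{\sigma(p)}}^{f_p},$$
so that condition (1) reads $u=v$ for every choice of indices, and condition (2) reads $\sum_{i_1,\ldots,i_p} uv^* = 1$. The main computational ingredient is the following ``collapse identity'', valid in any $*$-algebra with the sphere relations and for any exponent string $(\varepsilon_k)$:
$$\sum_{i_1,\ldots,i_p} x_{i_1}^{\varepsilon_1}\cdots x_{i_p}^{\varepsilon_p}\,x_{i_p}^{\bar\varepsilon_p}\cdots x_{i_1}^{\bar\varepsilon_1} \;=\; 1.$$
This is immediate by induction on $p$: the sum of the two innermost factors over $i_p$ is $\sum_{i_p} x_{i_p}^{\varepsilon_p} x_{i_p}^{\bar\varepsilon_p}$, which is one of $\sum x_i x_i^*$ or $\sum x_i^* x_i$, hence $1$; then iterate on $i_{p-1},\ldots,i_1$.

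For the direction (1)$\Rightarrow$(2), observe that $u=v$ forces $v^* = u^*$, so $uv^* = uu^*$, and summing gives $\sum uv^* = \sum uu^* = 1$ by the collapse identity applied with $\varepsilon = e$.

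For the harder direction (2)$\Rightarrow$(1), expand the manifestly positive quantity
$$0 \;\leq\; \sum_{i_1,\ldots,i_p}(u-v)(u-v)^* \;=\; \sum uu^* + \sum vv^* - \sum uv^* - \sum vu^*.$$
The first sum equals $1$ by the collapse identity; the second equals $1$ after the reindexing $j_k = i_{\sigma(k)}$, which since $\sigma$ is a permutation is a bijection on $p$-tuples and reduces $\sum vv^*$ to the collapse identity with exponents $(f_k)$; the third equals $1$ by hypothesis (2); and the fourth is the adjoint of the third, hence also $1$. Thus the total vanishes. Since each summand $(u-v)(u-v)^*$ is a positive element of the ambient $C^*$-algebra $C(S^{N-1}_{\mathbb C,+})$, a vanishing sum forces each summand to vanish, yielding $u=v$ for every choice of indices, which is (1). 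The main obstacle, and the reason the equivalence is nontrivial, is precisely this last step: the identity is not a purely algebraic consequence of the sphere relations, but uses the $C^*$-positivity of the enveloping algebra in an essential way.
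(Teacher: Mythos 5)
Your proof is correct and takes essentially the same route as the paper: the easy direction substitutes $v^*=u^*$ and collapses, and the harder direction expands $\sum(u-v)(u-v)^*$ into four sums, evaluates each to $1$ using the collapse identity and hypothesis (2) together with its conjugate, and finishes by $C^*$-positivity. Extracting the ``collapse identity'' as a named lemma and noting the reindexing for $\sum vv^*$ are stylistic improvements on the paper's terser presentation, but the argument is the same.
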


\begin{proof}
This kind of result is well-known, with the proof being as follows:

$(1)\implies(2)$ By conjugating the equality (1), and then by using $p$ times our assumption $\sum_ix_ix_i^*=\sum_ix_i^*x_i=1$ on the variables $x_i$, we obtain, as desired:
$$\sum_{i_1\ldots i_p}x_{i_1}^{e_1}\ldots x_{i_p}^{e_p}x_{i_{\sigma(p)}}^{\bar{f}_p}\ldots x_{i_{\sigma(1)}}^{\bar{f}_1}
=\sum_{i_1\ldots i_p}x_{i_1}^{e_1}\ldots x_{i_p}^{e_p}x_{i_p}^{\bar{e}_p}\ldots x_{i_1}^{\bar{e}_1}=1$$

$(2)\implies(1)$ The proof here is more complicated, and requires a positivity trick. First, by conjugating the equality (2) we obtain the following formula:
$$\sum_{i_1\ldots i_p}x_{i_{\sigma(1)}}^{f_1}\ldots x_{i_{\sigma(p)}}^{f_p}x_{i_p}^{\bar{e}_p}\ldots x_{i_1}^{\bar{e}_1}=1$$

The point now is that we have the following equality:
\begin{eqnarray*}
&&\sum_{i_1\ldots i_p}(x_{i_1}^{e_1}\ldots x_{i_p}^{e_p}-x_{i_{\sigma(1)}}^{f_1}\ldots x_{i_{\sigma(p)}}^{f_p})(x_{i_1}^{e_1}\ldots x_{i_p}^{e_p}-x_{i_{\sigma(1)}}^{f_1}\ldots x_{i_{\sigma(p)}}^{f_p})^*\\
&=&\sum_{i_1\ldots i_p}x_{i_1}^{e_1}\ldots x_{i_p}^{e_p}x_{i_p}^{\bar{e}_p}\ldots x_{i_1}^{\bar{e}_1}-x_{i_1}^{e_1}\ldots x_{i_p}^{e_p}x_{i_{\sigma(p)}}^{\bar{f}_p}\ldots x_{i_{\sigma(1)}}^{\bar{f}_1}\\
&&-x_{i_{\sigma(1)}}^{f_1}\ldots x_{i_{\sigma(p)}}^{f_p}x_{i_p}^{\bar{e}_p}\ldots x_{i_1}^{\bar{e}_1}+x_{i_{\sigma(1)}}^{f_1}\ldots x_{i_{\sigma(p)}}^{f_p}x_{i_{\sigma(p)}}^{\bar{f}_p}\ldots x_{i_{\sigma(1)}}^{\bar{f}_1}\\
&=&1-1-1+1=0
\end{eqnarray*}

Here we have used $p$ times our assumption $\sum_ix_ix_i^*=\sum_ix_i^*x_i=1$ on the variables $x_i$, for computing the first and the last sums. As for the two middle sums, these were computed by using our assumption (2), and its conjugate.

Now since we are in a situation of type $\sum_IA_IA_I^*=0$, by positivity it follows that we have $A_I=0$ for any $I$, and we recover precisely the condition (1), as desired.
\end{proof}

In order to exploit the above result, let us formulate:

\begin{definition}
Associated to any pairing $\pi\in P_2(k,l)$ are the following relations $\mathcal R_\pi$,
$$\sum_{i_1\ldots i_k}\delta_\sigma\binom{i_1\ldots i_k}{j_1\ldots j_l}x_{i_1}^{e_1}\ldots x_{i_k}^{e_k}=x_{j_1}^{f_1}\ldots x_{j_l}^{f_l}\quad,\quad\forall j_1,\ldots,j_l$$
between the standard coordinates $x_1,\ldots,x_N$ on the sphere $S^{N-1}_{\mathbb C,+}$.
\end{definition}

Also given a pairing $\pi\in P_2(k,l)$, let us denote by $\pi^-\in P_2(l\bar{k})$ the pairing obtained by flattening, with the usual convention that the upper indices get conjugated in this way, and so that the colored integer $k$ gets replaced by its conjugate $\bar{k}$.

The result found in Proposition 6.1 tells us that for a permutation $\pi\in S_p$, the relations $\mathcal R_\pi$ and $\mathcal R_{\pi^-}$ are equivalent. We can further build on this fact, as follows:

\begin{proposition}
Given a category ${\mathcal NC}_2\subset D\subset P_2$, generated by a certain set of permutations $S$, the following relations define the same subspace $S\subset S^{N-1}_{\mathbb C,+}$:
\begin{enumerate}
\item $\mathcal R_\pi$, with $\pi\in S$.

\item $\mathcal R_\pi$, with $\pi\in D$.

\item $\mathcal R_{\pi^-}$, with $\pi\in S$.

\item $\mathcal R_{\pi^-}$, with $\pi\in D$.
\end{enumerate}
\end{proposition}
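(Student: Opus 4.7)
Write $S_{(1)}, S_{(2)}, S_{(3)}, S_{(4)} \subset S^{N-1}_{\mathbb{C},+}$ for the four subspaces cut out by the relations in items (1)--(4), and write $\Sigma \subset P_2$ for the given generating set of permutations (to avoid a clash with the sphere $S$). The goal is to show $S_{(1)} = S_{(2)} = S_{(3)} = S_{(4)}$. Since $\Sigma \subset D$, two inclusions are tautological, namely $S_{(2)} \subset S_{(1)}$ and $S_{(4)} \subset S_{(3)}$; and the identification $S_{(1)} = S_{(3)}$ follows immediately from Proposition 6.1 applied separately to each $\pi \in \Sigma$, since the colored family of relations $\mathcal{R}_\pi$ coincides there with the colored family $\mathcal{R}_{\pi^-}$ as a constraint on the sphere (letting the colorings $e_*, f_*$ vary).

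The heart of the argument is then to prove the two remaining inclusions $S_{(1)} \subset S_{(2)}$ and $S_{(3)} \subset S_{(4)}$, by a category-closure argument. Set
$$\tilde{D} := \{\pi \in P_2 \mid \mathcal{R}_\pi \text{ holds on } S_{(1)}\}, \qquad \tilde{D}' := \{\pi \in P_2 \mid \mathcal{R}_{\pi^-} \text{ holds on } S_{(3)}\}.$$
By construction both contain $\Sigma$. If I can show that each of $\tilde{D}, \tilde{D}'$ is stable under the three category operations $\otimes, \circ, *$ on $P_2$, then each is a category of pairings containing $\Sigma$, hence contains $D = \langle \Sigma \rangle$, which gives exactly the two desired inclusions.

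The stability checks for $\tilde{D}$ are in each case a direct manipulation built from the standard compatibility formulas $\delta_{\pi \otimes \pi'} = \delta_\pi \cdot \delta_{\pi'}$, $\delta_{\pi \circ \pi'}(i,k) = \sum_j \delta_\pi(i,j)\,\delta_{\pi'}(j,k)$, and $\delta_{\pi^*}(j,i) = \delta_\pi(i,j)$, combined with the sphere relations $\sum_i x_i x_i^* = \sum_i x_i^* x_i = 1$ and the $C^*$-involution on $C(S^{N-1}_{\mathbb{C},+})$. Concretely, closure under $\otimes$ comes by multiplying two relations of the form $\mathcal{R}_\pi, \mathcal{R}_{\pi'}$; closure under $\circ$ comes by substituting one such relation into another and then contracting the middle index; and closure under $*$ comes by taking adjoints and implementing the color swap $\circ \leftrightarrow \bullet$.

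I expect the main obstacle, though not conceptually deep, to sit on the $\tilde{D}'$ side: the flattening map $\pi \mapsto \pi^-$ does not commute in any naive way with $\otimes, \circ, *$ on $P_2$, so one must first reinterpret these three category operations through the bijection $\pi \leftrightarrow \pi^-$ and then verify closure after translation. Once that translation is in place, each stability check reduces once more to multiplying, substituting, or taking adjoints of relations on the sphere, but the indexing (tracking which legs are upper vs.\ lower, and which colors get swapped upon rotation) is delicate and is the place where the care must go. Modulo this bookkeeping, the four subspaces are identified and the proposition follows.
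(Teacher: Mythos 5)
Your argument unpacks exactly what the paper's one-line proof leaves implicit: the paper cites Proposition 6.1 for the equivalence of (1) and (3), and then invokes ``$S$ generates $D$'' for the passage to (2) and (4), which is precisely your category-closure argument (show that the set $\tilde{D}$ of pairings whose relation $\mathcal{R}_\pi$ holds on $S_{(1)}$ is a category of pairings containing the generators). So in structure and content your proof is the paper's proof, made explicit.

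One technical point to watch: the compatibility formula you wrote for vertical composition, $\delta_{\pi\circ\pi'}(i,k)=\sum_j\delta_\pi(i,j)\,\delta_{\pi'}(j,k)$, is not correct in general. The right statement is $\sum_j\delta_\pi(i,j)\,\delta_{\pi'}(j,k)=N^{c(\pi,\pi')}\,\delta_{[\pi'\pi]}(i,k)$, where $c(\pi,\pi')$ counts the closed loops removed when stacking (this is the source of the $N^{c(\pi,\sigma)}$ in the categorical composition rule $T_\pi T_\sigma=N^{c(\pi,\sigma)}T_{[^\sigma_\pi]}$, cf.\ Proposition 7.3). Consequently, the naive ``substitute and contract'' step in your $\circ$-stability check yields $N^{c}$ times the relation $\mathcal{R}_{[\pi'\pi]}$ rather than the relation itself, and closing that gap requires either arguing that loops do not occur in the compositions you need, or absorbing the normalization via the sphere relation $\sum_i x_ix_i^*=\sum_i x_i^*x_i=1$. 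The paper does not spell this out either, so this is not a deviation from the author's argument, but it is the one place in your sketch where the claimed ``direct manipulation'' does not go through as literally stated.
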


\begin{proof}
As pointed out above, the equivalence between the relations (1) and (3) follows from Proposition 6.1. As for the equivalences between the relations (1) and (2), and between the relations (3) and (4), these follow from the fact that $S$ generates $D$.
\end{proof}

In practice now, we are interested in the equivalence $(2)\iff(4)$ coming from the above result. So, as a final conclusion to these considerations, let us formulate:

\begin{proposition}
Given a category of pairings ${\mathcal NC}_2\subset D\subset P_2$, we have
\begin{eqnarray*}
&&C(S^{N-1}_{\mathbb C,+})\Big/\left<\sum_{i_1\ldots i_p}\delta_\pi(i_1\ldots i_p)x_{i_1}^{e_1}\ldots x_{i_p}^{e_p}=1\Big|\forall p,\forall\pi\in D(p)\right>
\\
&=&C(S^{N-1}_{\mathbb C,+})\Big/\left<\sum_{i_1\ldots i_p}\delta_\sigma\binom{i_1\ldots i_p}{j_1\ldots j_p}x_{i_1}^{e_1}\ldots x_{i_p}^{e_p}=x_{j_1}^{f_1}\ldots x_{j_p}^{f_p}\Big|\forall p,\forall\sigma\in D(p,p)\right>
\end{eqnarray*}
provided that $D$ is generated by certain permutations.
\end{proposition}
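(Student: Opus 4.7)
The plan is to read both sides of the claimed equality as the two presentations compared in Proposition 6.3, applied under the hypothesis that $D$ is generated by a set of permutations $S$.

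For the right-hand quotient, the defining relations $\mathcal{R}_\sigma$ indexed by $\sigma\in D(p,p)$ include, as a sub-collection, those indexed by $\sigma\in S$, since $S\subset\bigsqcup_p D(p,p)$. Conversely, by Proposition 6.3, parts (1) and (2), the families $\{\mathcal{R}_\pi:\pi\in S\}$ and $\{\mathcal{R}_\pi:\pi\in D\}$ cut out the same subspace of $S^{N-1}_{\mathbb{C},+}$; in particular the relations coming from $D(p,p)$ are no stronger than those coming from $S$. Hence the right-hand quotient coincides with the sphere defined by $\{\mathcal{R}_\sigma:\sigma\in S\}$.

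For the left-hand quotient, I would use that for a permutation $\sigma\in S_p$ and arbitrary exponents $e,f$, the relation from Proposition 6.1~(2), namely
$$\sum_{i_1\ldots i_p} x_{i_1}^{e_1}\ldots x_{i_p}^{e_p}\, x_{i_{\sigma(p)}}^{\bar{f}_p}\ldots x_{i_{\sigma(1)}}^{\bar{f}_1}=1,$$
is itself a relation of the first presentation: after relabelling the $2p$ dummy indices as $k_1,\ldots,k_{2p}$ it reads $\sum_{k} \delta_{\sigma^-}(k)\,x_{k_1}^{g_1}\ldots x_{k_{2p}}^{g_{2p}}=1$, where $\sigma^-\in D(0,2p)$ is the flattening of $\sigma$ (lying in $D$ by rotation closure) and $g=(e_1,\ldots,e_p,\bar{f}_p,\ldots,\bar{f}_1)$. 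As $e,f$ vary independently, $g$ sweeps over all possible exponent tuples of length $2p$. So the sub-collection of first-presentation relations indexed by flattenings of generators is exactly $\{\mathcal{R}_{\pi^-}:\pi\in S\}$ in the sense of Proposition 6.3, part (3). By parts (3) and (4) of the same proposition, enlarging the indexing set from $S$ to all of $D$, and thereby covering every $\pi\in D(0,p)$ via rotation closure, produces no further relations; hence the left-hand quotient is also the sphere defined by $\{\mathcal{R}_{\pi^-}:\pi\in S\}$.

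Finally, Proposition 6.3 (or directly Proposition 6.1 applied generator by generator) yields that $\{\mathcal{R}_\sigma:\sigma\in S\}$ and $\{\mathcal{R}_{\sigma^-}:\sigma\in S\}$ cut out the same sphere, and combining this with the preceding two steps gives the desired equality. The main technical subtlety is the identification in the second paragraph: one must verify that as $e$ and $f$ range freely, the concatenated exponent pattern $g$ really does realise every possible exponent tuple on the $2p$ legs of $\sigma^-$, so that no first-presentation relation escapes the reduction to generators. Once this bookkeeping is settled, the proof is essentially a double application of Proposition 6.3.
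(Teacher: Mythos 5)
Your proof is correct and follows the same route as the paper, which simply invokes the equivalence $(2)\iff(4)$ from Proposition 6.3. You unpack this into the explicit chain (right-hand relations sandwiched between families (1) and (2); left-hand relations identified with family (4), hence equivalent to (3); and finally $(1)\iff(3)$, which is Proposition 6.1), but the underlying content is identical.
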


\begin{proof}
This follows indeed from the equivalence $(2)\iff(4)$ from Proposition 6.3.
\end{proof}

Now by using this criterion, we can go back to the 9 spheres, and we obtain:

\begin{theorem}
The basic $9$ geometries are all fizzy, in the sense of Definition 5.4.
\end{theorem}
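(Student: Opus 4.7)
My plan is to reduce the verification to the content of Propositions 6.3 and 6.4. Given a pairing category $\mathcal{NC}_2\subset D\subset P_2$ generated by permutations, Proposition 6.4 identifies the fizziness quotient of $S^{N-1}_{\mathbb C,+}$ (whose defining relations are indexed by $\pi\in D(p)$) with the quotient by the relations $\mathcal R_\sigma$ for $\sigma\in D(p,p)$; and Proposition 6.3 further permits us to restrict $\sigma$ to a generating set of permutations. So the task reduces to checking, for each of the 9 basic spheres $S$, that the relation $\mathcal R_\sigma$ holds on $S$ for each generating permutation $\sigma$ of the corresponding category.

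The first step is to exhibit, for each of the 9 categories in Proposition 3.2, a generating set of permutations over $\mathcal{NC}_2$. The basic $\mathcal{NC}_2$ requires no additional generators. The remaining eight categories are generated by adding at most two among: the plain crossing in $P_2(2,2)$, the half-crossing in $P_2(3,3)$, a color-swap vertical in $P_2(1,1)$, and the non-matching ``identity'' pair in $\bar P_2(2,2)$ (identity permutation with top row $\circ\bullet$ and bottom row $\bullet\circ$). All of these are permutation-shaped pairings, so Proposition 6.4 applies in each of the 9 cases.

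The second step is the direct case-by-case verification of $\mathcal R_\sigma$. For the base case, the $\mathcal{NC}_2$-relations reduce to the free-sphere identities $\sum_i x_ix_i^*=\sum_i x_i^*x_i=1$, which hold on $S^{N-1}_{\mathbb C,+}$ and hence on every subsphere. The plain crossing yields $x_ix_j=x_jx_i$ and $x_ix_j^*=x_j^*x_i$ (the classical-sphere relations); the half-crossing yields $x_ax_bx_c=x_cx_bx_a$ (the $*$-sphere relations); the non-matching identity pair yields $x_ix_j^*=x_i^*x_j$ (the $\mathbb T$-twisting relations); and the color-swap vertical yields $x_i=x_i^*$ (the real-sphere relations). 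These match the presentations of the 9 spheres recalled in Theorem 3.3 and established in \cite{ba1}, \cite{bb2}, so each sphere indeed satisfies the relations of Definition 5.4.

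The main obstacle I expect is the color bookkeeping. Each generating permutation $\sigma$ comes equipped with a color pattern on its upper and lower rows, and for each choice of exponents $e_i,f_i$ one extracts a concrete instance of $\mathcal R_\sigma$; one must verify that the full family of such relations is already present in the defining presentation of the intended sphere, and conversely that no ``extra'' relations are produced that would cut the sphere too small. Once this finite tabulation is carried out, the statement follows at once from Propositions 6.3 and 6.4, with no further computation.
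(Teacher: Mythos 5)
Your proposal follows essentially the same route as the paper: identify, via Propositions 6.3 and 6.4, that the fizziness relations indexed by $D(p)$ are equivalent to the permutation-shaped relations $\mathcal R_\sigma$ for a generating set of $D$, then observe that for each of the 9 categories those generators are exactly the diagrams (color-swap vertical, non-matching identity pair, plain crossing, half-crossing) defining $O_N^+,\mathbb TO_N^+,U_N,U_N^*$ and hence the corresponding spheres. Your explicit tabulation of what each generator yields on the sphere side just spells out what the paper invokes tacitly via Theorem 3.3, so the argument is correct and matches the paper's.
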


\begin{proof}
We already know from Theorem 3.3 above that the basic 9 geometries are easy. Thus, we just have to check the fizziness axiom from Definition 5.4 above.

In order to do so, we can use Proposition 6.4 above, which makes the link between the defining relations for the spheres, which involve certain permutations $\pi\in S_p$, and the fizziness condition, which is expressed in terms of certain pairings $\pi\in D(p)$.

To be more precise, let us look at $O_N^+,\mathbb TO_N^+,U_N,U_N^*$, which produce by taking intersections all the 9 basic easy quantum groups. These 4 quantum groups appear by imposing to the standard coordinates of $U_N^+$ the relations coming from the following diagrams:
$$
\xymatrix@R=12mm@C=8mm{
\circ\ar@{-}[d]\\
\bullet}
\qquad\qquad\quad
\xymatrix@R=12mm@C=8mm{
\circ\ar@{-}[d]&\bullet\ar@{-}[d]\\
\bullet&\circ}
\qquad\qquad\quad
\xymatrix@R=12mm@C=8mm{
\times\ar@{-}[dr]&\times\ar@{-}[dl]\\
\times&\times}
\qquad\qquad\quad
\xymatrix@R=12mm@C=8mm{
\times\ar@{-}[drr]&\times\ar@{-}[d]&\times\ar@{-}[dll]
\\
\times&\times&\times}$$

Here the various $\times$ symbols stand, as usual, for the fact that we can use there any combinations of symbols $\circ$ and $\bullet$, provided that the resulting pairing is matching. Now since all these diagrams are permutations, Proposition 6.4 applies and gives the result.
\end{proof}

\section{Schur-Weyl twisting}

We are now in position of starting the twisting work, as part of the general program explained in the introduction. We begin with some preliminaries, from \cite{ba1}.

Given $\pi\in P(k,l)$, we can always switch pairs of neighbors, belonging to different blocks, either in the upper row, or in the lower row, as to make $\pi$ noncrossing. 

In the particular case $\pi\in Perm(k,k)$, this is the same as writing $\pi$ as a product of transpositions, and so, computing the signature $\varepsilon(\pi)$. In general now, we have:

\begin{proposition}
We have a signature map $\varepsilon:P_{even}\to\{-1,1\}$, given by $\varepsilon(\pi)=(-1)^c$, where $c$ is the number of switches needed to make $\pi$ noncrossing. In addition:
\begin{enumerate}
\item For $\pi\in Perm(k,k)\simeq S_k$, this is the usual signature.

\item For $\pi\in P_2$ we have $(-1)^c$, where $c$ is the number of crossings.

\item For $\pi\in P$ obtained from $\sigma\in NC_{even}$ by merging blocks, the signature is $1$.
\end{enumerate}
\end{proposition}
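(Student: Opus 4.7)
My plan is to first establish the well-definedness of $\varepsilon$ via an auxiliary $\mathbb{Z}/2\mathbb{Z}$-valued invariant, and then to derive the three specific claims.

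For well-definedness, I would associate to each $\pi \in P_{even}$ an invariant $c(\pi) \in \mathbb{Z}/2\mathbb{Z}$ as follows: for each unordered pair of distinct blocks $\{B, B'\}$, list the positions in $B \cup B'$ in increasing order, label each by its block (using a fixed order $B < B'$), and let $\alpha(B, B')$ be the number of inversions in this labeled sequence. This is well-defined modulo $2$ since $|B|\cdot|B'|$ is even. Set $c(\pi) = \sum_{\{B,B'\}} \alpha(B,B') \pmod 2$. I would then check (a) $c(\pi) = 0$ for noncrossing $\pi$, because each pair of blocks is either disjoint ($\alpha = 0$) or nested (with the inner block of even size forcing $\alpha$ even); and (b) any elementary switch at adjacent positions $i, i+1$ belonging to different blocks $B, B'$ alters $\alpha$ only for this single pair, since for any third block $B''$ the sequence for $B \cup B''$ (and for $B' \cup B''$) merely relabels position $i$ as position $i+1$ without changing inversion counts, while within the $B \cup B'$ sequence the switch is one adjacent transposition flipping the inversion parity by $1$. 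Together (a) and (b) force the parity of any sequence of switches reducing $\pi$ to a noncrossing partition to equal $c(\pi)$, making $\varepsilon(\pi) = (-1)^{c(\pi)}$ well-defined.

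Cases (1) and (2) then follow quickly. For (2), when $\pi \in P_2$ each block has size two, and a direct three-case check (disjoint, crossing, nested) gives $\alpha(B,B') \equiv 1 \pmod 2$ iff $B, B'$ cross, so $c(\pi)$ counts crossings mod $2$, matching the formula. For (1), one represents $\pi \in Perm(k,k)$ as the Brauer pairing on $2k$ legs laid out with the upper row left-to-right followed by the lower row right-to-left; two pair-blocks cross in this linear layout precisely when the corresponding indices form an inversion of $\pi$, so $c(\pi)$ is the inversion count mod $2$, i.e.\ the usual signature.

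For (3), the key input is the additivity $\alpha(B_1 \sqcup B_2, B_3) = \alpha(B_1, B_3) + \alpha(B_2, B_3)$, which is immediate from the inversion definition. If $\pi$ arises from $\sigma \in NC_{even}$ by merging blocks according to some grouping $\{G\}$ of $\sigma$'s blocks, iterating additivity gives $c(\sigma) - c(\pi) \equiv \sum_G \sum_{\{B_i, B_j\} \subset G} \alpha(B_i, B_j) \pmod 2$. Each summand is even by the same noncrossing analysis used in (a), so $c(\pi) \equiv c(\sigma) = 0$ and $\varepsilon(\pi) = 1$. The main obstacle is the local verification in step (b) that an elementary switch only affects $\alpha$ for the pair of blocks directly involved; this is routine bookkeeping, hinging on the fact that the two swapped positions are adjacent and hence interchangeable with respect to any third block's positions.
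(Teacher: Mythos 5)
Your proof is correct, and it takes a genuinely more self-contained route than the paper. The paper defers the well-definedness of $\varepsilon$ entirely to \cite{ba1}, and treats (1) by reducing $\pi$ to the identity permutation via transpositions, (2) by reducing to the standard form $|\ldots|^{\cup\ldots\cup}_{\cap\ldots\cap}$, and (3) by induction on the number of merged blocks. You instead exhibit an explicit $\mathbb{Z}/2\mathbb{Z}$-valued invariant --- the sum, over all unordered pairs of blocks, of the pairwise inversion counts $\alpha(B,B')$ --- and then observe that each elementary switch flips exactly one summand by $1$, while every noncrossing partition has total invariant $0$. This single invariant makes well-definedness, (1), (2) and (3) all fall out by direct counting rather than by choosing particular reduction sequences, and the additivity of $\alpha$ under block-merging yields (3) without any induction. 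The approach buys a more transparent and uniform proof at the cost of one structural fact you glide over lightly: for two noncrossing blocks in a linear order, one block's convex hull contains the other and the inner block lies within a single gap of the outer one, so the label word has the form $B^a\,(B')^{|B'|}\,B^b$; spelling this out would make the parity claim for $\alpha(B,B')$ in the noncrossing case airtight. Aside from that minor elision, the argument is complete.
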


\begin{proof}
The fact that the number $c$ in the statement is well-defined modulo 2 is standard, and we refer here to \cite{ba1}. As for the remaining assertions, these are as well from \cite{ba1}:

(1) For $\pi\in Perm(k,k)$ the standard form is $\pi'=id$, and the passage $\pi\to id$ comes by composing with a number of transpositions, which gives the signature. 

(2) For a general $\pi\in P_2$, the standard form is of type $\pi'=|\ldots|^{\cup\ldots\cup}_{\cap\ldots\cap}$, and the passage $\pi\to\pi'$ requires $c$ mod 2 switches, where $c$ is the number of crossings. 

(3) For a partition $\pi\in P_{even}$ coming from $\sigma\in NC_{even}$ by merging a certain number $n$ of blocks, the fact that the signature is 1 follows by recurrence on $n$.
\end{proof}

We can make act partitions in $P_{even}$ on tensors in a twisted way, as follows:

\begin{definition}
Associated to any partition $\pi\in P_{even}(k,l)$ is the linear map
$$\bar{T}_\pi(e_{i_1}\otimes\ldots\otimes e_{i_k})=\sum_{\sigma\leq\pi}\varepsilon(\sigma)\sum_{j:\ker(^i_j)=\sigma}e_{j_1}\otimes\ldots\otimes e_{j_l}$$
where $\varepsilon:P_{even}\to\{-1,1\}$ is the signature map.
\end{definition}

Observe the similarity with the formula in Definition 2.5.

We will need as well the following result, which is also from \cite{ba1}:

\begin{proposition}
The assignement $\pi\to\bar{T}_\pi$ is categorical, in the sense that
$$\bar{T}_\pi\otimes\bar{T}_\sigma=\bar{T}_{[\pi\sigma]}\quad,\quad\bar{T}_\pi\bar{T}_\sigma=N^{c(\pi,\sigma)}\bar{T}_{[^\sigma_\pi]}\quad,\quad\bar{T}_\pi^*=\bar{T}_{\pi^*}$$
where $c(\pi,\sigma)$ is the number of closed loops obtained when composing.
\end{proposition}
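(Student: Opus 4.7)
The plan is to verify each of the three identities by direct expansion in the standard basis of tensors, with the signature map from Proposition 7.1 as the main tool. Two of its properties are used repeatedly: multiplicativity under horizontal concatenation, $\varepsilon(\alpha \otimes \beta)=\varepsilon(\alpha)\varepsilon(\beta)$, and invariance under upside-down reflection, both of which are immediate from the ``minimal number of switches to reach a noncrossing form'' description of $\varepsilon$.

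The tensor identity $\bar T_\pi \otimes \bar T_\sigma = \bar T_{[\pi\sigma]}$ follows because sub-partitions of the concatenation $[\pi\sigma]$ factor uniquely as $[\alpha\beta]$ with $\alpha\leq\pi$ and $\beta\leq\sigma$, and both the index-kernel matching conditions and the signature weights split accordingly. The adjoint identity $\bar T_\pi^*=\bar T_{\pi^*}$ is a direct check on matrix entries: the $(i,j)$-entry of $\bar T_\pi$ is $\varepsilon(\ker\binom{i}{j})$ when $\ker\binom{i}{j}\leq \pi$ and zero otherwise, and swapping rows with columns corresponds precisely to the upside-down flip $\pi\mapsto\pi^*$, under which both the order relation $\leq$ and the signature $\varepsilon$ are invariant.

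The substantive identity is the composition formula $\bar T_\pi \bar T_\sigma = N^{c(\pi,\sigma)}\bar T_{[^\sigma_\pi]}$. Expanding the left-hand side and carrying out the middle-index sum, the contributions organize themselves by pairs of sub-partitions $\alpha\leq\sigma$ (in the upper factor) and $\beta\leq\pi$ (in the lower factor): each such pair produces $\varepsilon(\alpha)\varepsilon(\beta)$ times a basis tensor sum, weighted by a power of $N$ coming from the closed loops formed in the vertical concatenation of $\alpha$ and $\beta$. Regrouping these terms by the resulting ``open part'' $\tau\leq[^\sigma_\pi]$ obtained from $[^\alpha_\beta]$ after loop removal, and comparing with the analogous expansion of the right-hand side, the identity reduces to the combinatorial assertion
$$\varepsilon(\alpha)\,\varepsilon(\beta)=\varepsilon(\tau),$$
together with a loop-counting check that absorbs the constant factor into $N^{c(\pi,\sigma)}$.

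The main obstacle is the signature compatibility above, which requires controlling how crossings behave under vertical composition. I would prove it by reduction to the noncrossing case: first verify it when either $\alpha$ or $\beta$ is noncrossing, where its signature equals $1$ by Proposition 7.1(3) and the composition introduces no new sign changes beyond those tracked by $\tau$; then propagate by induction on the number of switches needed to make $\alpha,\beta$ noncrossing, observing that a single switch of two neighbours in $\alpha$ or $\beta$ induces a corresponding single switch in $\tau$, so that $\varepsilon(\alpha)\varepsilon(\beta)$ and $\varepsilon(\tau)$ change by the same sign. This mirrors the argument in \cite{ba1} to which the statement refers, and carrying it through while keeping careful track of the loop contributions completes the verification.
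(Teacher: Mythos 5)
Your overall strategy is the same as the paper's: reduce each of the three identities to a compatibility statement for the signature map $\varepsilon$ on kernel partitions, and argue these via Proposition 7.1 together with a reduction to the noncrossing case; the paper likewise just records these three signature identities and defers the verification to \cite{ba1}. The tensor and adjoint identities are fine as you describe them.

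The gap is in your inductive step for the composition identity. The claim that ``a single switch of two neighbours in $\alpha$ or $\beta$ induces a corresponding single switch in $\tau$'' is only correct for switches in the \emph{outer} rows. A switch in the shared middle row induces no switch in $\tau$ at all, since that row disappears when forming $\tau$. Worse, if you apply a middle-row switch to $\alpha$ alone you destroy the consistency between $\alpha$'s upper-row partition and $\beta$'s lower-row partition, and for inconsistent pairs the identity $\varepsilon(\alpha)\varepsilon(\beta)=\varepsilon(\tau)$ can simply fail: take $\alpha$ to be the crossing pairing on two strands and $\beta$ a single four-block, so $\varepsilon(\alpha)\varepsilon(\beta)=-1$, while $\tau$ is again a four-block with $\varepsilon(\tau)=1$. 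The correct bookkeeping is that outer-row switches on $\alpha$ or on $\beta$ change the corresponding signature and $\varepsilon(\tau)$ simultaneously by $-1$, whereas middle-row switches must be performed on $\alpha$ and $\beta$ \emph{simultaneously} to preserve consistency, multiplying $\varepsilon(\alpha)\varepsilon(\beta)$ by $(-1)^2=1$ and leaving $\tau$ untouched. With that fix your induction closes and you recover the argument the paper points to in \cite{ba1}.
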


\begin{proof}
This follows as in \cite{bsp}, \cite{tw2}, by performing modifications where needed, using Proposition 7.1. To be more precise, our three properties require:
$$\varepsilon\left(\ker\begin{pmatrix}i_1&\ldots&i_p\\ j_1&\ldots&j_q\end{pmatrix}\right)
\varepsilon\left(\ker\begin{pmatrix}k_1&\ldots&k_r\\ l_1&\ldots&l_s\end{pmatrix}\right)=
\varepsilon\left(\ker\begin{pmatrix}i_1&\ldots&i_p&k_1&\ldots&k_r\\ j_1&\ldots &j_q&l_1&\ldots&l_s\end{pmatrix}\right)$$
$$\varepsilon\left(\ker\begin{pmatrix}i_1&\ldots&i_p\\ j_1&\ldots&j_q\end{pmatrix}\right)
\varepsilon\left(\ker\begin{pmatrix}j_1&\ldots&j_q\\ k_1&\ldots&k_r\end{pmatrix}\right)
=\varepsilon\left(\ker\begin{pmatrix}i_1&\ldots&i_p\\ k_1&\ldots&k_r\end{pmatrix}\right)$$
$$\varepsilon\left(\ker\begin{pmatrix}i_1&\ldots&i_p\\ j_1&\ldots&j_q\end{pmatrix}\right)=\varepsilon\left(\ker\begin{pmatrix}j_1&\ldots&j_q\\ i_1&\ldots&i_p\end{pmatrix}\right)$$

But all these formulae can be proved by using the rules (1,2,3) in Proposition 7.1 above, and this gives the result. For full details here, we refer to \cite{ba1}. 
\end{proof}

Finally, we have the following definition, from \cite{ba1}:

\begin{definition}
Given an intermediate easy quantum group $H_N\subset G\subset U_N^+$, where $H_N$ is the hyperoctahedral group, we construct an intermediate quantum group 
$$H_N\subset\bar{G}\subset U_N^+$$
by replacing, at the Tannakian level, the maps $T_\pi$ by their twisted versions $\bar{T}_\pi$. 
\end{definition}

Now back to our noncommutative geometry questions, we are first interested in the 9+9 quantum groups from Theorem 3.3. The corresponding twists are as follows:

\begin{proposition}
The twists of the basic $9+9$ easy quantum groups are
$$\xymatrix@R=13mm@C=13mm{
K_N\ar[r]&K_N^*\ar[r]&K_N^+\\
\mathbb TH_N\ar[r]\ar[u]&\mathbb TH_N^*\ar[r]\ar[u]&\mathbb TH_N^+\ar[u]\\
H_N\ar[r]\ar[u]&H_N^*\ar[r]\ar[u]&H_N^+\ar[u]}
\qquad\xymatrix@R=17mm@C=5mm{\\ :&\\&\\}\ 
\xymatrix@R=13mm@C=13mm{
\bar{U}_N\ar[r]&\bar{U}_N^*\ar[r]&U_N^+\\
\mathbb T\bar{O}_N\ar[r]\ar[u]&\mathbb T\bar{O}_N^*\ar[r]\ar[u]&\mathbb TO_N^+\ar[u]\\
\bar{O}_N\ar[r]\ar[u]&\bar{O}_N^*\ar[r]\ar[u]&O_N^+\ar[u]}$$
where $\bar{U}_N,\bar{U}_N^*$ appear by suitably twisting the defining relations for $U_N,U_N^*$, and where the other quantum groups appear as intersections, according to the above diagram.
\end{proposition}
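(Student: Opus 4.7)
Everything hinges on the identity $\bar T_\pi = T_\pi$ for noncrossing $\pi$, which is immediate from Proposition 7.1(3): every $\sigma \leq \pi$ has signature $+1$, so the signed sum of Definition 7.2 collapses to the plain sum of Definition 2.5. Consequently, any easy quantum group whose Tannakian category is generated by noncrossing diagrams is its own Schur--Weyl twist. In view of Proposition 3.2 this already disposes of the entire right-hand column of each diagram, since the categories of $O_N^+, \mathbb T O_N^+, U_N^+$ and of $H_N^+, \mathbb T H_N^+, K_N^+$ are, respectively, noncrossing pairings and noncrossing even partitions.

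For the two left columns on the G-side I would invoke the very definition of the $q=-1$ and half-liberated twists: $\bar O_N, \bar U_N, \bar O_N^*, \bar U_N^*$ are constructed in \cite{ba1} as the Schur--Weyl twists of $O_N, U_N, O_N^*, U_N^*$, so Definition 7.4 returns them tautologically. For the $\mathbb T$-decorated rows I would check that the extra phase generator is noncrossing data in the sense of Proposition 3.2 (the category $\bar P_2$ is obtained from $P_2$ by imposing a single color-balance condition that is insensitive to signature), so that it is carried through the twisting unchanged, producing $\mathbb T O_N \to \mathbb T \bar O_N$ and $\mathbb T O_N^* \to \mathbb T \bar O_N^*$, and likewise $\mathbb T O_N^+ \to \mathbb T O_N^+$.

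The main obstacle is the K-side, where the claim is that every quantum group $H_N \subset K \subset K_N^+$ appearing in the diagram is self-twist, including the classical $H_N, \mathbb T H_N, K_N$ and the half-liberated $H_N^*, \mathbb T H_N^*, K_N^*$. The strategy is to exploit the defining relations of $K_N^+$, namely the magic-type vanishings $u_{ij} u_{ik}^* = u_{ij}^* u_{ik} = 0$ for $j \neq k$, which kill off every contribution in $\bar T_\pi - T_\pi$ coming from a genuine crossing: for each kernel $\sigma \leq \pi$ with $\varepsilon(\sigma) = -1$ that appears in the expansion of $\bar T_\pi$, the corresponding tensor component already vanishes on $C(K)$. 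Thus $T_\pi$ and $\bar T_\pi$ induce the same operator on $C(K)$ for every $\pi \in E$, so the Tannakian categories of $K$ and of its twist coincide. This reflects the general principle from \cite{ba1} that the hyperoctahedral world is Schur--Weyl rigid, and is the only point requiring a genuine computation; the cases $H_N^*$ and $K_N^*$ need a small extra verification using that the half-liberating partition is a permutation and therefore contributes a well-defined signature that is neutralized by the above vanishings.

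Finally, I would close by noting that all remaining entries in both $3\times 3$ diagrams, given by Proposition 3.1 as intersections of quantum groups already treated, follow at once from the fact that Schur--Weyl twisting commutes with the intersection of Tannakian categories, since $\pi \mapsto \bar T_\pi$ is applied pointwise and both intersection and linear span are preserved. This reduces the full statement to the three independent inputs above: noncrossing rigidity, the definition of the bar versions on the G-side, and the hyperoctahedral rigidity on the K-side.
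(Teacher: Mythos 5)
Your overall architecture matches the paper's: use Proposition 7.1(3) to conclude that noncrossing categories produce self-twisting quantum groups (disposing of the whole free column), invoke \cite{ba1} for $\bar O_N,\bar U_N,\bar O_N^*,\bar U_N^*$, and handle the remaining entries by intersection. Two points, however, diverge from or fall short of what a complete proof requires.

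First, a small imprecision on the $G$-side. You say that $\bar O_N,\bar U_N,\bar O_N^*,\bar U_N^*$ ``are constructed in \cite{ba1} as the Schur--Weyl twists'', so Definition 7.4 returns them ``tautologically.'' In fact these quantum groups are defined there (and earlier, in \cite{bbc}) as $q=-1$ twists via explicit deformed relations, and the content of the relevant result in \cite{ba1} is precisely that the Schur--Weyl twist of Definition 7.4 \emph{coincides} with this $q=-1$ construction. That identification is not tautological; it is the point being cited. The paper is careful about this, saying only that ``these appear as indicated'' and referring to \cite{ba1}.

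Second, and more substantially, your treatment of the $K$-side takes a genuinely different route from the paper: the paper simply cites the self-duality computations in \cite{ba2}, whereas you attempt a self-contained argument from the magic-type relations $u_{ij}u_{ik}^*=u_{ij}^*u_{ik}=0$ ($j\neq k$). The underlying idea — that these relations make the intertwining conditions for $T_\pi$ and $\bar T_\pi$ equivalent modulo the $K_N^+$-ideal — is the correct mechanism, and a fleshed-out version of it is a worthwhile alternative. But as written the argument is not yet a proof. The phrase ``$T_\pi$ and $\bar T_\pi$ induce the same operator on $C(K)$'' is not meaningful: $T_\pi,\bar T_\pi$ are fixed linear maps between tensor powers of $\mathbb C^N$, and what needs to be shown is that the two families of relations ``$T_\pi\in\mathrm{Hom}(u^{\otimes k},u^{\otimes l})$'' and ``$\bar T_\pi\in\mathrm{Hom}(u^{\otimes k},u^{\otimes l})$'' generate the same ideal in $C(U_N^+)$ (equivalently, the same ideal on top of the $K_N^+$ relations). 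Moreover, it is not immediate that every $u$-monomial accompanying a kernel $\sigma\leq\pi$ with $\varepsilon(\sigma)=-1$ is killed by the cubic relations; a crossing in $\sigma$ need not put the offending legs adjacent, so this requires a genuine combinatorial argument (indeed, this is what \cite{ba2} supplies). A minor related slip: quantum group intersection corresponds at the Tannakian level to the \emph{generated} category $\langle D_G,D_H\rangle$, not to the intersection $D_G\cap D_H$ (which corresponds to $\langle G,H\rangle$), so the closing remark should say that twisting commutes with the generated-category operation.
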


\begin{proof}
It is known from \cite{ba1}, as a consequence of Proposition 7.1 (3), that the free quantum groups $H_N^+\subset G\subset U_N^+$ equal their own twists. Thus, the quantum groups on the right are the correct ones. Regarding now $\bar{U}_N,\bar{U}_N^*$, these appear indeed as indicated, and for the details here, we refer to \cite{ba1}. Finally, the intersection claims are elementary.

Regarding now the quantum reflection groups, we must prove that all of them are self-dual, $K=\bar{K}$. But this follows from the various results in \cite{ba2}.
\end{proof}

Now back to the case of an arbitrary easy geometry $(S,T,K,G)$, the next step is that of twisting the torus $T$. However, there is no need for a new definition here, because the tori are automatically self-dual, due to the following general result:

\begin{proposition}
The diagonal tori of $G,\bar{G}$ coincide, their duals being given by
$$\Gamma=F_N\Big/\left<g_{i_1}\ldots g_{i_k}=g_{i_1}\ldots g_{j_l}\Big|\forall i,j,k,l,\exists\pi\in D(k,l),\delta_\pi\begin{pmatrix}i\\ j\end{pmatrix}\neq0\right>$$
where $D$ is the associated category of pairings.
\end{proposition}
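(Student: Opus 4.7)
The plan is to compute the diagonal torus of $\bar{G}$ directly from its Tannakian description given in Definition 7.4, and to show that the resulting relations on $C^*(F_N)$ coincide term-by-term with those presenting the diagonal torus of $G$ in Proposition 5.1. The whole argument will run parallel to the proof of Proposition 5.1, and the only genuinely new ingredient is a comparison between the supports of $\bar{T}_\pi$ and $T_\pi$.

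First, I mimic the proof of Proposition 5.1 in the twisted setting. By Definition 7.4, the algebra $C(\bar{G})$ is obtained from $C(U_N^+)$ by imposing $\bar{T}_\pi \in \mathrm{Hom}(u^{\otimes k}, u^{\otimes l})$ for all $\pi \in D(k,l)$, and further dividing by the ideal generated by $u_{ij}$ with $i \neq j$ amounts to setting $u = \mathrm{diag}(g_1, \ldots, g_N)$, where $g_1, \ldots, g_N$ are the standard generators of $C^*(F_N)$. Evaluating $\bar{T}_\pi g^{\otimes k}$ and $g^{\otimes l} \bar{T}_\pi$ on basis tensors as in the proof of Proposition 5.1, the intertwiner relation reduces to the following requirement: for every pair of multi-indices $(i,j)$ for which the coefficient of $e_{j_1} \otimes \cdots \otimes e_{j_l}$ in $\bar{T}_\pi(e_{i_1} \otimes \cdots \otimes e_{i_k})$ is nonzero, one must have $g_{i_1} \cdots g_{i_k} = g_{j_1} \cdots g_{j_l}$ in $C^*(F_N)$.

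Second, I compare the supports of $\bar{T}_\pi$ and $T_\pi$. Unpacking Definition 7.2, the $(i,j)$-matrix coefficient of $\bar{T}_\pi$ equals $\varepsilon(\sigma)$ when $\sigma = \ker \begin{pmatrix} i \\ j \end{pmatrix}$ satisfies $\sigma \leq \pi$, and vanishes otherwise; this range of kernels is precisely the set $\{(i,j) : \delta_\pi \begin{pmatrix} i \\ j \end{pmatrix} \neq 0\}$ describing the support of $T_\pi$. Since $\varepsilon(\sigma) \in \{\pm 1\}$ is an invertible scalar, the vanishing relation $(\bar{T}_\pi g^{\otimes k} - g^{\otimes l} \bar{T}_\pi)_{i,j} = 0$ produces exactly the same equality $g_{i_1} \cdots g_{i_k} = g_{j_1} \cdots g_{j_l}$ as in the untwisted case, over exactly the same set of multi-indices $(i,j)$.

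Combining these two steps, the presentation obtained for the diagonal torus of $\bar{G}$ agrees with the presentation of the diagonal torus of $G$ given by Proposition 5.1, and dualizing yields the announced formula for $\Gamma$. The hard part of the argument is the support comparison in the second step, namely the bookkeeping required to verify that the summation range $\sigma \leq \pi$ appearing in Definition 7.2 corresponds exactly to the set of kernels detected by $\delta_\pi$; once this is observed, the rest of the reasoning is a direct transcription of Proposition 5.1, with $T_\pi$ replaced by $\bar{T}_\pi$ throughout.
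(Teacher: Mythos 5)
Your proposal is correct and follows essentially the same route as the paper's own proof: both arguments reduce to mimicking Proposition~5.1 with $\bar{T}_\pi$ in place of $T_\pi$, and both rest on the single key observation that the support of $\bar{T}_\pi$ (the set of pairs $(i,j)$ with $\ker\binom{i}{j}\leq\pi$) coincides with the support of $T_\pi$ (the set where $\delta_\pi\binom{i}{j}\neq 0$), so the invertible signs $\varepsilon(\sigma)\in\{\pm 1\}$ do not alter the relations imposed on the diagonal generators.
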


\begin{proof}
This follows by adapting the proof of Proposition 5.1 above. To be more precise, the proof there works with the $\delta$ symbols replaced by $\bar{\delta}$ symbols, and since we have $\delta\neq0\iff\bar{\delta}\neq0$, the twisted and untwisted computations give the same result.
\end{proof}

Regarding now the spheres, the twisting here goes as follows:

\begin{proposition}
Given a category $\mathcal{NC}_2\subset D\subset P_2$ generated by permutations, the following two sets of relations produce the same noncommutative sphere $\bar{S}\subset S^{N-1}_{\mathbb C,+}$:
\begin{enumerate}
\item $\sum_{j_1\ldots j_p}\bar{\delta}_\pi(j_1\ldots j_p)x_{j_1}^{e_1}\ldots x_{j_p}^{e_p}=1$, with $p\in\mathbb N$ and $\pi\in P(p)$.

\item $\sum_{i_1\ldots i_p}\bar{\delta}_\pi\binom{i_1\ldots i_p}{j_1\ldots j_p}x_{i_1}^{e_1}\ldots x_{i_p}^{e_p}=x_{j_1}^{f_1}\ldots x_{j_p}^{f_p}$, with $p\in\mathbb N$ and $\pi\in P(p,p)$.
\end{enumerate}
In the case of the $9$ basic geometries, $\bar{S}$ appears as a suitable twist of $S$, with the defining relations for $\bar{S}$ being obtained from those for $S$ by adding suitable signs.
\end{proposition}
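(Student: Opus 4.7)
The plan is to mirror the untwisted argument of Proposition 6.1 through Proposition 6.4, carrying the signature $\varepsilon$ through each step. First, I would establish a twisted analogue of Proposition 6.1 for permutations. Given a permutation $\pi\in S_p$, viewed as a pairing in $P_2(p,p)$, the key observation is that $\bar\delta_\pi\binom{i_1\ldots i_p}{j_1\ldots j_p}$ collapses to $\varepsilon(\pi)\,\delta_\pi\binom{i_1\ldots i_p}{j_1\ldots j_p}$, because the kernel pattern forced by a permutation pairing is essentially unique, so the sum over subpartitions in Definition 7.2 reduces to a single term. Thus the relations (1) and (2) differ from their untwisted counterparts by a single overall sign $\varepsilon(\pi)$, and the positivity trick from Proposition 6.1 survives verbatim: expanding $\sum_I(A_I-\varepsilon(\pi)B_I)(A_I-\varepsilon(\pi)B_I)^*$ using $\sum_i x_ix_i^*=\sum_ix_i^*x_i=1$ still telescopes as $1-1-1+1=0$, yielding the termwise equivalence of the two relations.

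Second, I would extend the equivalence from a set of generating permutations of $D$ to all of $D$. This uses Proposition 7.3, which states that $\pi\mapsto\bar T_\pi$ is categorical in the same sense as the untwisted $T_\pi$. It follows, exactly as in the passage from Proposition 6.3 to Proposition 6.4, that each of the relation families (1) and (2) is stable under the category operations $\otimes,\circ,*$ that produce $D$ from its generating permutations. Consequently, imposing either family only for the generators is equivalent to imposing it for all $\pi\in D$, and the two algebras in the statement coincide.

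Third, for the last assertion about the nine basic geometries, I would invoke Theorem 6.5 and the explicit generating diagrams displayed there for $O_N^+,\mathbb TO_N^+,U_N,U_N^*$. Since these generators are permutations, step one applies, and the twisted sphere $\bar S$ is presented by the \emph{same} generating relations as $S$, with each one multiplied by the signature of the corresponding permutation diagram. The remaining five geometries are intersections of these four, so their twisted spheres inherit this ``signs attached'' description. This is precisely the content of the final sentence: $\bar S$ is a twist of $S$ obtained by decorating the defining relations with appropriate signs.

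The main obstacle is the sign bookkeeping in step one: one must verify that when $\pi$ is a permutation pairing, the subpartition sum defining $\bar\delta_\pi$ really does degenerate into a single signed Kronecker symbol $\varepsilon(\pi)\delta_\pi$, so that the sign factors out of the positivity computation as a scalar and does not leak index-dependent weights into the sum of squares. Once this reduction is established, the rest of the argument is a clean transcription of the untwisted proof, with $\varepsilon$ accompanying $\delta$ everywhere.
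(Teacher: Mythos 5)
The overall strategy you propose (mirror Propositions 6.1--6.4, extend via Proposition 7.3, then specialize to the 9 cases) is indeed the approach the paper gestures at. But your key observation is factually wrong, and this matters for how the argument is justified. From Definition 7.2, for a partition $\pi$ and indices $i,j$ one has
$$\bar\delta_\pi\binom{i}{j}=\varepsilon\left(\ker\binom{i}{j}\right)\delta_\pi\binom{i}{j},$$
so the sign attached to a nonzero term is $\varepsilon$ of the \emph{kernel of the indices}, not $\varepsilon(\pi)$. These are the same only when the indices are chosen generically enough that $\ker\binom{i}{j}=\pi$; for degenerate index choices the kernel is a strict coarsening of $\pi$, and its signature can differ. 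Concretely, take the crossing pairing $\pi=\{1,3\}\{2,4\}$ with $\varepsilon(\pi)=-1$: for indices $j_1=j_3\neq j_2=j_4$ the kernel is $\pi$ and the sign is $-1$, but for $j_1=j_2=j_3=j_4$ the kernel is the single block $\{1,2,3,4\}\in NC_{even}$, which has signature $+1$ by Proposition 7.1\,(3). So $\bar\delta_\pi$ does \emph{not} collapse to $\varepsilon(\pi)\delta_\pi$, and ``the relations differ by a single overall sign'' is false. This is visible already for the known twisted spheres from \cite{ba1}: the twist of $abc=cba$ is $abc=-cba$ when $a,b,c$ carry distinct indices but remains $abc=cba$ when two indices coincide, a mixed-sign relation.

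That said, your argument is closer to salvage than you give it credit for, although for the wrong reason. Writing $\varepsilon_I$ for the index-dependent signs, the positivity computation
$$\sum_I(A_I-\varepsilon_IB_I)(A_I-\varepsilon_IB_I)^*=\sum_IA_IA_I^*-\varepsilon_IA_IB_I^*-\varepsilon_IB_IA_I^*+B_IB_I^*$$
still telescopes to $1-1-1+1=0$: the first and last sums collapse as before since $\varepsilon_I^2=1$, and the two middle sums are the twisted hypothesis and its conjugate. So index-dependent weights \emph{do} leak into the sum of squares, but they are harmless because they are signs, not because they factor out. This is the ``same computation as in the untwisted case'' that the paper alludes to. You should reformulate your step one accordingly, keeping $\varepsilon_I=\varepsilon(\ker\binom{i}{j})$ throughout, and similarly rephrase the last paragraph: the presentation of $\bar S$ is not obtained by multiplying each generating relation by a global signature, but by decorating each monomial in the relation with the sign of the kernel of its index tuple, which is precisely what the $\bar\delta_\pi$ symbols encode.
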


\begin{proof}
Here the equivalence follows by using the same computation as in the untwisted case, and this equivalence gives the second assertion as well, by performing a case-by-case analysis. All the twisted spheres are in fact already known, from \cite{ba1}.
\end{proof}

As a main result now, basically collecting what we have so far, and including as well a number of new computations, we have:

\begin{theorem}
Given a fizzy geometry $(S,T,K,G)$, we can twist $S,G$, by setting
\begin{eqnarray*}
C(\bar{S})&=&C(S^{N-1}_{\mathbb C,+})\Big/\left<\sum_{i_1\ldots i_p}\bar{\delta}_\pi(i_1\ldots i_p)x_{i_1}^{e_1}\ldots x_{i_p}^{e_p}=1\Big|\forall p,\forall\pi\in D(p)\right>\\
C(\bar{G})&=&C(U_N^+)\Big/\left<\bar{T}_\pi\in Fix(u^{\otimes p})\Big|\forall p,\forall\pi\in D(p)\right>
\end{eqnarray*}
and in the $9$ main cases, we have $K=\bar{K}$ and $\bar{G}=G^+(\bar{S})$.
\end{theorem}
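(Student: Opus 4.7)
The plan is to proceed in three steps, paralleling the untwisted treatment in Theorem 5.5, but with the signature-weighted maps $\bar{T}_\pi$ playing the role of $T_\pi$.

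First, I would check that the two displayed formulas do define a compact noncommutative space $\bar{S}$ and a compact quantum group $\bar{G}$. For $\bar{G}$, the point is that the collection $\{\bar{T}_\pi \mid \pi\in D\}$ is stable under tensor product, composition and adjoint, by Proposition 7.3, so Tannakian duality in the form used in Definition 7.4 applies and produces a closed subgroup $\bar{G}\subset U_N^+$. For $\bar{S}$, I would argue that, exactly as in the untwisted case treated in Proposition 6.4, the scalar relations in the definition of $\bar{S}$ can be repackaged as tensor-valued relations $\bar{T}_\pi\in Fix$; the categorical properties from Proposition 7.3 guarantee consistency of the ideal generated by them. Note also that the equality $K=\bar{K}$ in the $9$ main cases requires no new argument: it is precisely the content of Proposition 7.5 for the left-hand diagram there.

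Next, I would establish the inclusion $\bar{G}\subset G^+(\bar{S})$. The natural candidate coaction is the usual one, $\Phi(x_i)=\sum_j u_{ij}\otimes x_j$; what has to be checked is that it descends modulo the twisted relations defining $\bar{S}$. This is a direct computation: the image under $\Phi^{\otimes p}$ of the defining element $\sum_i\bar{\delta}_\pi(i_1\ldots i_p)x_{i_1}^{e_1}\ldots x_{i_p}^{e_p}-1$ can be rewritten using the fixed-vector property $\bar{T}_\pi\in Fix(u^{\otimes p})$, which is exactly the defining relation of $\bar{G}$. Here the categorical identities of Proposition 7.3 are what make the computation close up, and the argument is formally the same as in the untwisted case, with $\bar\delta$ throughout.

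The reverse inclusion $G^+(\bar{S})\subset\bar{G}$ is the real content of the theorem, and this is where I expect the main obstacle. In the untwisted fizzy setting, this step was essentially an axiom ($G=G^+(S)$ was part of the definition of a geometry). For the twist, we only have a geometry on the \emph{untwisted} side, and we must verify the analogous equality for $\bar{S},\bar{G}$ by hand, in each of the $9$ cases. My approach would be to leverage Proposition 6.4 in twisted form: since all categories $D$ in the $9$ basic cases are generated by permutations (the four diagrams listed at the end of the proof of Theorem 6.5), the permutation version of the twisted relations, namely $\sum_i\bar\delta_\pi\binom{i}{j}x_i^{e}=x_j^{f}$, is equivalent to the scalar version. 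One can then extract from these relations the full Tannakian category $\{\bar{T}_\pi\mid\pi\in D\}$ fixed by any quantum group acting affinely on $\bar{S}$, by the same positivity trick as in Proposition 6.1, adapted to signs. This reduces the claim to a case-by-case inspection of the four ``generating'' diagrams and their intersections, and then invoking the known twisted sphere computations recorded in \cite{ba1} and the computations on the diagonal tori (Proposition 7.6) to identify the resulting quantum group with $\bar{G}$ as described in Proposition 7.5.
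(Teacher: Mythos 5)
Your proof follows the same basic logic as the paper's, but the paper's own proof of Theorem 7.8 is extremely brief: it simply notes that $K=\bar{K}$ is Proposition 7.5 and that the twisted isometry axiom $\bar{G}=G^+(\bar{S})$ is ``known from \cite{ba1}.'' You correctly identify both of these pillars, so in that sense your argument is aligned with the paper's; the difference is that you attempt to unpack the black-box citation to \cite{ba1} into an internal argument, paralleling the untwisted Propositions 6.1--6.4.

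A few remarks on where your unpacking is solid and where it is thin. Your first step (well-definedness of $\bar{G}$ via the categorical properties of $\bar{T}_\pi$ from Proposition 7.3) is correct and in the spirit of Definition 7.4. Your claim that the scalar relations defining $\bar{S}$ ``can be repackaged as tensor-valued relations $\bar{T}_\pi\in Fix$'' is slightly misstated --- the $Fix$-type relations are imposed on the quantum group, not the sphere --- but the intended point (that the sphere relations are the $j$-summed instances of the fixed-vector equations, cf.\ Proposition 7.7) is fine. Your second step, the inclusion $\bar{G}\subset G^+(\bar{S})$, is genuinely proved in the paper, but a section later (Proposition 8.3(2)), by exactly the coaction computation you describe; so you are correct that no new idea is needed there.

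The gap, which you do flag honestly, is the reverse inclusion $G^+(\bar{S})\subset\bar{G}$. You propose to push through the positivity trick of Proposition 6.1 ``adapted to signs.'' This is not automatic: the proof of Proposition 6.1(2)$\Rightarrow$(1) crucially expands a sum of products $A_IA_I^*$ and uses that cross terms cancel against the biunitarity relations; once $\bar\delta$ introduces signs depending on the index kernel, the middle cross terms no longer combine so cleanly, and one would need a twisted analogue of $\sum_i x_ix_i^*=1$-type collapsing consistent with the signature. In fact the paper itself does not attempt this internally --- it relies entirely on the sphere/quantum-isometry computations carried out in \cite{ba1}, which handle the twisted case directly rather than by transporting the untwisted positivity argument. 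So your route is plausible in outline, but at its key step it presupposes a twisted Proposition 6.1 that is not established in the paper and is not obviously a formal consequence of the untwisted version. To make the argument self-contained you would need to prove that twisted analogue, or, as the paper does, simply cite the explicit identification of $G^+(\bar{S})$ with the twisted quantum group in each of the $9$ cases from \cite{ba1}.
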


\begin{proof}
The fact that the self-duality condition holds indeed is already known, from Proposition 7.5. As for the twisted isometry axiom, here the results are known from \cite{ba1}.
\end{proof}

As a partial conclusion here, at least in the case of the 9 main geometries, we have objects $(\bar{S},T,K,\bar{G})$, which form a kind of ``noncommutative geometry'' as well.

\section{Axioms, refined}

In this section we go one step further into our axiomatization program, by taking into accounts the twists, as to reach to our final formalism, in this paper. Let us begin with a straightforward definition, obtained by ``twisting'' Definition 1.7 above:

\begin{definition}
A twisted noncommutative geometry $(\bar{S},T,K,\bar{G})$ consists of
\begin{enumerate}
\item an intermediate algebraic manifold $\bar{S}^{N-1}_\mathbb R\subset \bar{S}\subset S^{N-1}_{\mathbb C,+}$, called sphere,

\item an intermediate compact space $\widehat{\mathbb Z_2^N}\subset T\subset\widehat{F_N}$, called torus,

\item and intermediate quantum groups $H_N\subset K\subset K_N^+$ and $\bar{O}_N\subset\bar{G}\subset U_N^+$,
\end{enumerate}
such that the following conditions are satisfied,
\begin{enumerate}
\item $K=\bar{G}\cap K_N^+$, $\bar{G}=<K,\bar{O}_N>$,

\item $T$ is the standard torus of $\bar{S}$, and the diagonal torus of $K,\bar{G}$,

\item $\bar{G}=G^+(\bar{S})$, $K\subset\bar{G}\cap G^+(T)$,
\end{enumerate}
where we agree to identify the full and reduced quantum group algebras.
\end{definition}

All this is in fact quite clumsy and incomplete, because we have no twisted analogue of the key axiom $K=G\cap G^+(T)$ from the untwisted case. In addition, all the examples that we have come in fact by twisting a usual noncommutative geometry $(S,T,K,G)$.

Thus, in order to truly advance, we should rather axiomatize directly the sextuplets of type $(S,\bar{S},T,K,G,\bar{G})$. We are led in this way to the following notion:

\begin{definition}
A bi-easy (or ``busy'') noncommutative geometry is a sextuplet of type $(S,\bar{S},T,K,G,\bar{G})$, with $(S,T,K,G)$ being a fizzy geometry, satisfying: 
\begin{enumerate}
\item Self-duality: $K=\bar{K}$.

\item Twisted isometries: $\bar{G}=G^+(\bar{S})$.
\end{enumerate}
Equivalently, $(\bar{S},T,K,\bar{G})$ must be a twisted geometry, in the above sense.
\end{definition}

Here the equivalence is clear, with the only non-trivial verification, namely that of the self-duality condition $K=\bar{K}$, coming from $K=G\cap K_N^+$ and $K=\bar{G}\cap K_N^+$.

Let us first comment on the above axioms. If we put together all the conditions that we have on $(S,\bar{S},T,K,G,\bar{G})$ we are led to a quite long list, and the point is that some of these conditions are in fact automatic. We have indeed the following result:

\begin{proposition}
Given categories ${\mathcal NC}_{even}\subset E\subset P_{even}$ and ${\mathcal NC}_2\subset D\subset P_2$ satisfying $D=E\cap P_2$ and $E=<D,\mathcal{NC}_{even}>$, the following conditions are automatic:
\begin{enumerate}
\item $G\subset G^+(S)$.

\item $\bar{G}\subset G^+(\bar{S})$.

\item $K\subset G^+(T)$.
\end{enumerate}
In addition, the standard torus of $\bar{S}$ coincides with the standard torus of $S$, and with the common diagonal torus of the quantum groups $K,G,\bar{G}$ as well.
\end{proposition}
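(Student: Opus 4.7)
The plan is to verify the three inclusions (1)--(3) by direct Tannakian bookkeeping, and then to establish the torus identifications by combining Propositions 5.2, 7.6, and Theorem 5.5.

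For the first two inclusions, I check that the candidate $\Phi(x_i)=\sum_j u_{ij}\otimes x_j$ extends to a morphism $C(S)\to C(G)\otimes C(S)$, respectively $C(\bar S)\to C(\bar G)\otimes C(\bar S)$. The defining relations of $C(S)$ from Theorem 5.5 are $\sum_i\delta_\pi(i)\,x_{i_1}^{e_1}\cdots x_{i_p}^{e_p}=1$ for $\pi\in D(p)$, and the Tannakian fixed-vector condition $T_\pi\in\mathrm{Fix}(u^{\otimes p})$ used to define $G$ unpacks to the identity $\sum_j\delta_\pi(j)\,u_{i_1j_1}^{e_1}\cdots u_{i_pj_p}^{e_p}=\delta_\pi(i)\cdot 1$ in $C(G)$; a direct substitution then shows that $\Phi$ sends each sphere relation to $1\otimes 1$. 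For (2) the same computation goes through verbatim with $\bar T_\pi$ and $\bar\delta_\pi$ in place of $T_\pi$ and $\delta_\pi$, the crucial input being that the twisted assignment $\pi\mapsto\bar T_\pi$ remains categorical, by Proposition 7.3.

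For (3), the torus $T=\widehat\Gamma$ is presented by Proposition 5.2 via word relations indexed by $D$, or equivalently by $E$, under the compatibility assumptions $D=E\cap P_2$ and $E=\langle D,\mathcal{NC}_{even}\rangle$. Since $K$ is easy with category $E\supseteq D$, every fixed-vector condition $T_\pi\in\mathrm{Fix}(u^{\otimes p})$ for $\pi\in D$ is available in $C(K)$, so the candidate coaction $\Phi(g_i)=\sum_j u_{ij}\otimes g_j$ preserves the relations defining $T$, by essentially the same expansion used in (1).

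For the torus coincidences at the end, the diagonal tori of $G$ and $K$ both equal $T$ by Proposition 5.2 applied to $D$ and to $E$, the diagonal torus of $\bar G$ equals $T$ by Proposition 7.6, and the standard torus of $S$ equals $T$ by the fizziness axiom, as in the proof of Theorem 5.5. The remaining identification is that the standard torus of $\bar S$ also equals $T$: the plan here is to repeat the standard torus calculation from the proof of Theorem 5.5 with the twisted relation $\sum_i\bar\delta_\pi(i)\,x_{i_1}^{e_1}\cdots x_{i_p}^{e_p}=1$, passing through the quotient $x_ix_i^*=x_i^*x_i=1/N$ and decomposing the sum by the kernel $\sigma$ of $i$, so as to reorganize the resulting signed sum into the unsigned word equalities defining $\widehat\Gamma$, via the same mechanism as in Proposition 7.6. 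The hardest step of the proof lies precisely here: Proposition 7.6 needed only the equivalence $\delta\neq 0\iff\bar\delta\neq 0$, whereas here one must also verify that the signs $\varepsilon(\sigma)$ collapse consistently with the group-dual structure already imposed on $T$.
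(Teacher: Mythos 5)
Your proposal follows the paper's proof essentially verbatim: for (1)--(3) you exchange the two sums in $\Phi$ applied to a defining relation and then invoke the Tannakian fixed-vector condition for $G$, $\bar G$, $K$ respectively, which is exactly the computation displayed in the paper, and for the torus coincidences you reduce to the standard-torus computation of Theorem 5.5 together with Propositions 5.2 and 7.6. The paper dispatches the last identification with a single sentence (``the computation of the standard torus of $\bar S$ is similar to the one in the untwisted case, and we obtain the same space''), so your explicit flagging of the sign verification --- that after the positivity argument one obtains $g_{i_1}^{e_1}\cdots g_{i_p}^{e_p}=\varepsilon(\ker(i))$ rather than $=1$, and one must still show this produces the same space $T$ --- points at a step that the paper's own proof leaves unaddressed; you are right to single it out as the delicate part.
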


\begin{proof}
Regarding the various quantum isometry group assertions, in all cases, we must construct a morphism mapping the variables $x_i$ to the variables $X_i=\sum_ju_{ij}\otimes x_j$. In the untwisted case, we have the following computation, valid for any partition $\pi$:
\begin{eqnarray*}
\sum_{i_1\ldots i_p}\delta_\pi(i_1\ldots i_p)X_{i_1}^{e_1}\ldots X_{i_p}^{e_p}
&=&\sum_{i_1\ldots i_p}\delta_\pi(i_1\ldots i_p)\sum_{j_1\ldots j_p}u_{i_1j_1}^{e_1}\ldots u_{i_pj_p}^{e_p}\otimes x_{j_1}^{e_1}\ldots x_{j_p}^{e_p}\\
&=&\sum_{j_1\ldots j_p}\left(\sum_{i_1\ldots i_p}\delta_\pi(i_1\ldots i_p)u_{i_1j_1}^{e_1}\ldots u_{i_pj_p}^{e_p}\right)\otimes x_{j_1}^{e_1}\ldots x_{j_p}^{e_p}\\
&=&\sum_{j_1\ldots j_p}\delta_\pi(j_1\ldots j_p)x_{j_1}^{e_1}\ldots x_{j_p}^{e_p}\\
&=&1
\end{eqnarray*}

Thus in all cases we have a coaction map given by $x_i\to X_i$, as desired. The proof in the twisted case is similar. Finally, the computation of the standard torus of $\bar{S}$ is similar to the one in the untwisted case, and we obtain the same space.
\end{proof}

Now back to the general case, everything depends on the underlying categories of partitions, and we have the following refimenent of Theorem 5.5 above:

\begin{proposition}
For a busy geometry $(S,\bar{S},T,K,G,\bar{G})$, coming from categories of pairings ${\mathcal NC}_2\subset D\subset P_2$ and of partitions ${\mathcal NC}_{even}\subset E\subset P_{even}$, we have:
\begin{eqnarray*}
C(\dot{S})&=&C(S^{N-1}_{\mathbb C,+})\Big/\left<\sum_{i_1\ldots i_p}\dot{\delta}_\pi(i_1\ldots i_p)x_{i_1}^{e_1}\ldots x_{i_p}^{e_p}=1\Big|\forall p,\forall\pi\in D(p)\right>\\
C(T)&=&C^*(F_N)\Big/\left<g_{i_1}^{e_1}\ldots g_{i_p}^{e_p}=1\Big|\forall p,\forall i,\exists\pi\in D(p),\delta_\pi(i_1\ldots i_p)\neq0\right>\\
C(K)&=&C(K_N^+)\Big/\left<T_\pi\in Fix(u^{\otimes p})\Big|\forall p,\forall\pi\in D(p)\right>\\
C(\dot{G})&=&C(U_N^+)\Big/\left<\dot{T}_\pi\in Fix(u^{\otimes p})\Big|\forall p,\forall\pi\in D(p)\right>
\end{eqnarray*}
Conversely, given two categories ${\mathcal NC}_2\subset D\subset P_2$ and ${\mathcal NC}_{even}\subset E\subset P_{even}$ satisfying $D=E\cap P_2$ and $E=<D,\mathcal{NC}_{even}>$, the objects defined above form a noncommutative geometry when $K=\bar{K}$ and $G=G^+(S)$, $\bar{G}=G^+(\bar{S})$, $K=G\cap G^+(T)$.
\end{proposition}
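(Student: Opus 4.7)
The plan is to split the proof into a forward direction, where the four explicit presentations are extracted using Theorem 5.5 together with the twisting tools of Section 7, and a converse direction, where one assembles the six objects from $D,E$ and verifies the axioms of Definition 8.2. The dot notation is read uniformly as a simultaneous shorthand for the untwisted and twisted statements.

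For the forward direction, the subfamily $(S,T,K,G)$ of a busy geometry is fizzy in the sense of Definition 5.4, so the four untwisted formulas are precisely those supplied by Theorem 5.5. The twisted presentation for $C(\bar{S})$ is the content of Proposition 7.7, and the twisted presentation for $C(\bar{G})$ follows from Definition 7.4 together with the easy Tannakian reconstruction recalled in Proposition 2.6, this time applied to the twisted maps $\bar{T}_\pi$ rather than to the untwisted maps $T_\pi$.

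For the converse direction, one first builds the fizzy geometry $(S,T,K,G)$ out of $D,E$ by invoking the converse half of Theorem 5.5: this uses precisely the hypotheses $D=E\cap P_2$, $E=\langle D,\mathcal{NC}_{even}\rangle$, $G=G^+(S)$, and $K=G\cap G^+(T)$. The two additional axioms for a busy geometry in Definition 8.2, namely $K=\bar{K}$ and $\bar{G}=G^+(\bar{S})$, are imposed directly in the hypothesis. To close the argument one verifies that $(\bar{S},T,K,\bar{G})$ satisfies Definition 8.1; here the relations $K=\bar{G}\cap K_N^+$ and $\bar{G}=\langle K,\bar{O}_N\rangle$ come from the twisted analogue of Proposition 2.7, which is valid because $\pi\mapsto\bar{T}_\pi$ is a tensor functor with the same formal calculus as $\pi\mapsto T_\pi$ by Proposition 7.3. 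The fact that $T$ is simultaneously the standard torus of $\bar{S}$ and the diagonal torus of $\bar{G}$ is the content of Proposition 7.5 and Proposition 8.3.

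The main obstacle is the coherence of the twisted torus: one must check that the standard torus of $\bar{S}$, built from the twisted relations involving $\bar{\delta}_\pi$, still coincides with the torus $T$ arising on the untwisted side. This reduces, as in the proof of Proposition 7.5, to the observation that $\delta_\pi\neq 0\iff\bar{\delta}_\pi\neq 0$, since the torus records only whether a given word in the generators is set to be trivial, and not the sign that would scale it; once this is in place, every categorical argument used in Theorem 5.5 transposes verbatim to the twisted setting.
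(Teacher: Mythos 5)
Your proof is correct and takes essentially the same approach as the paper, which dispatches this with a single sentence (``This follows from Theorem 5.5, and from the various results above''); you are simply spelling out which ``results above'' are being invoked, and the choices you make are the right ones. One minor slip: in the last two paragraphs you cite Proposition 7.5 where you mean Proposition 7.6 --- it is 7.6, not 7.5, that states the diagonal tori of $G$ and $\bar{G}$ coincide and whose proof rests on the observation $\delta_\pi\neq 0\iff\bar\delta_\pi\neq 0$; Proposition 7.5 is the list of twists of the $9+9$ quantum groups and does not use that observation.
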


\begin{proof}
This follows indeed from Theorem 5.5, and from the various results above, which show that the twists $\bar{S},\bar{G}$ appear via formulae which are similar to those for $S,G$.
\end{proof}

At the level of main examples now, we have:

\begin{theorem}
The basic $9$ geometries are all busy.
\end{theorem}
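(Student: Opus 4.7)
The plan is to reduce this theorem to the auxiliary results already assembled in sections 6 and 7. By Definition 8.2, a busy geometry is a fizzy geometry $(S,T,K,G)$ satisfying two extra conditions: the self-duality $K=\bar{K}$, and the twisted isometry equality $\bar{G}=G^+(\bar{S})$. Since Theorem 6.5 already establishes fizziness for the $9$ basic geometries, my task reduces to checking these two extra conditions, case by case, for the $9$ sextuplets.

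For the self-duality, I will appeal to Proposition 7.5, whose two diagrams display the $9+9$ basic quantum groups and their twists side by side. Direct comparison shows that the nine entries of the $K$-table coincide with their twisted counterparts, so $K=\bar{K}$ holds uniformly. Two mechanisms are behind this: the free levels of the diagram follow from the general principle of \cite{ba1}, based on Proposition 7.1 (3), that quantum groups between $H_N^+$ and $U_N^+$ whose generating partitions admit noncrossing representatives of signature $+1$ equal their own twists; the classical and half-classical levels are covered by the explicit computations in \cite{ba2}. In either case there is nothing new to do beyond reading off the tables.

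For the twisted isometry axiom $\bar{G}=G^+(\bar{S})$, I will combine Proposition 7.7, which exhibits each $\bar{S}$ as obtained from $S$ by inserting the signs $\varepsilon(\sigma)$ into the fizziness relations, with Theorem 7.8, whose formula for $C(\bar{G})$ is built from the twisted maps $\bar{T}_\pi$ attached to the same category of pairings $D$ as $G$. The identification $\bar{G}=G^+(\bar{S})$ then reduces, via Proposition 6.4 in its signed version, to the compatibility between the defining relations of $\bar{S}$ and the Tannakian relations defining $\bar{G}$; this is precisely the content of the case analysis carried out in the proof of Theorem 6.5, applied now with signs attached.

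The main obstacle is really organizational rather than substantive: one must track, across the nine geometries, which of the four generating diagrams from the proof of Theorem 6.5 contribute non-trivial signs (the half-liberating diagram and the full commutation diagram do, the noncrossing and hyperoctahedral-type diagrams do not), in order to match the relations defining $\bar{S}$ with the twisted quantum groups $\bar{U}_N$, $\bar{U}_N^*$, $\bar{O}_N$, $\bar{O}_N^*$ of Proposition 7.5. Once this bookkeeping is in place, the two extra axioms of Definition 8.2 are verified, and the fizzy structure provided by Theorem 6.5 supplies the remaining conditions, completing the proof.
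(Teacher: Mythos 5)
Your proof is correct and takes essentially the same route as the paper, which simply invokes Theorem 6.5 for fizziness and Theorem 7.8 for the self-duality $K=\bar{K}$ and the twisted isometry equality $\bar{G}=G^+(\bar{S})$. You merely unfold Theorem 7.8's assertions back to Proposition 7.5 and Proposition 7.7, which is a more explicit but logically identical argument.
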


\begin{proof}
This follows indeed from Theorem 6.5 and Theorem 7.8 above.
\end{proof}

The classification problem for the busy geometries remains open. The classical and free cases can be investigated by using \cite{tw1}, \cite{tw2}, and this is probably quite routine. As for the purely unitary case, where $U_N\subset G\subset U_N^+$, our conjecture here is that our present axioms should substantially restrict the list of known examples.

\end{document}